\crefname{hypothesis}{Hypothesis}{Hypotheses}
\crefname{equation}{}{}
\crefname{subsection}{Section}{Sections}
\title{Upscaling errors in Heterogeneous Multiscale Methods for the Landau-Lifshitz equation 
  }
\author{Lena Leitenmaier\thanks{Department of Mathematics, KTH,
    Royal Institute of Technology, Stockholm, Sweden
    (\email{lenalei@kth.se}).}  \and Olof Runborg\thanks{Department
    of Mathematics, KTH, Royal Institute of Technology, Stockholm,
    Sweden (\email{olofr@kth.se}).}  }
\newcommand{\M}{\ensuremath{\mathbf{M}}\xspace}
\newcommand{\m}{\ensuremath{\mathbf{m}}\xspace}
\newcommand{\K}{\ensuremath{\mathcal{K}}\xspace}
\newcommand{\g}{\ensuremath{\mathbf{g}}\xspace}
\newcommand{\Real}{\ensuremath{\mathbb{R}}\xspace}
\newcommand{\w}{\ensuremath{\mathbf{w}}\xspace}
\renewcommand{\u}{\ensuremath{\mathbf{u}}\xspace}
\newcommand{\f}{\ensuremath{\mathbf{f}}\xspace}
\newcommand{\h}{\ensuremath{\mathbf{h}}\xspace}
\renewcommand{\b}{\ensuremath{\mathbf{b}}\xspace}
\newcommand{\e}{\ensuremath{\mathbf{e}}\xspace}
\newcommand{\F}{\ensuremath{\mathbf{F}}\xspace}
\renewcommand{\v}{\ensuremath{\mathbf{v}}\xspace}
\newcommand{\A}{\ensuremath{\mathbf{A}}\xspace}
\newcommand{\I}{\ensuremath{\mathbf{I}}\xspace}
\renewcommand{\L}{\ensuremath{\mathcal{L}}}
\renewcommand{\AA}{\ensuremath{\mathcal{A}}}
\newcommand{\bnabla}{\boldsymbol\nabla}
\renewcommand{\Re}{\ensuremath{\mathcal{R}e}\xspace}
\renewcommand{\Im}{\ensuremath{\mathcal{I}m}\xspace}
\begin{document}

\maketitle

\begin{abstract}
  In this paper, we consider several possible ways to set up
  Heterogeneous Multiscale Methods for the Landau-Lifshitz equation
  with a highly oscillatory diffusion coefficient, which can be seen
  as a means to modeling rapidly varying ferromagnetic materials. We then prove
  estimates for the errors introduced when approximating the
  relevant quantity in each of the models given a periodic problem,
  using averaging in time and space of the solution to a
  corresponding micro problem. In our setup, the Landau-Lifshitz
  equation with highly oscillatory coefficient is chosen as the
  micro problem for all models. We then show that the averaging
  errors only depend on $\varepsilon$, the size of the microscopic
  oscillations, as well as the size of the averaging domain in time
  and space and the choice of averaging kernels.

\end{abstract}

\begin{keywords}
  Heterogeneous Multiscale Methods; Micromagnetics; Magnetization Dynamics;
\end{keywords}

\begin{AMS}
  65M15; 35B27; 78M40
\end{AMS}

\section{Introduction}

In micromagnetics, the evolution of the magnetization within a
ferromagnet is described by the Landau-Lifshitz (LL) equation
\cite{LandauLifshitz35}.  In this paper, we consider a
simplification of the deterministic version of this equation, where
we only take into account the exchange interaction between magnetic
moments and neglect other contributions influencing the
magnetization, such as anisotropy, temperature and external
field. We consider a ferromagnet with a rapidly varying material,
which we model by introducing a material coefficient
$a^\varepsilon(x)$, where $\varepsilon \ll 1$ represents the spatial
scale of the finest variations.. One example could be a composite,
consisting of two different materials with different interaction
behavior and layers of thickness $\varepsilon$.
According to this simplified model, the partial
differential equation determining the evolution of the magnetization
$\M^\varepsilon(x, t)$ then is
\begin{equation}
  \label{eq:ll}
  \begin{split}
    \partial_t \M^\varepsilon(x,t) &= - \M^\varepsilon(x,t) \times \L \M^\varepsilon(x, t) - \alpha \M^\varepsilon(x,t) \times \M^\varepsilon(x,t) \times \L \M^\varepsilon(x, t) \,, \\
    \M^\varepsilon(x, 0) &= \M_\mathrm{init}(x),
\end{split}
\end{equation}
where
$\M_\mathrm{init}(x)$ is a smooth function with values in $\Real^3$ such that
$|\M_\textrm{init}(x)| = 1$, and $0 < \alpha \le
1$ a damping coefficient. In this model, the effective field is
given by
\begin{equation}
  \label{eq:H_eff}
  \L \M^\varepsilon := \bnabla \cdot \left(a^\varepsilon(x) \bnabla \M^\varepsilon(x, t)\right).
\end{equation}
Here the coefficient
$a^\varepsilon$ influences the overall behavior of the magnetization
significantly. A very similar model was first introduced in
\cite{hamdache} and used recently in
\cite{alouges2019stochastic}. Also in for example \cite{multilayer}
and \cite{highcontrast}, related approaches are applied.

When solving \cref{eq:ll} numerically, one would have to resolve the
$\varepsilon$-scale in order to get a correct result. However, the
resulting amount of computational work is infeasible for small
$\varepsilon$. Instead of solving \cref{eq:ll}, one therefore in
many cases considers solutions to a corresponding effective equation
instead, which capture the correct magnetization behavior on a
coarse scale but do not resolve the
$\varepsilon$-scale. For periodic problems, one can apply techniques
from classic homogenization theory, \cite{Cioranescu_Donato_1},
\cite{Bensoussan_Lions_Papanicolaou_1}, to obtain such a homogenized
solution
$\M_0$ as well as correction terms as shown in \cite{paper1}. When
aiming to deal with somewhat more general coefficients, though, it
can be advantageous to instead use numerical methods to approximate
the homogenized solution.  This can be done using multiscale methods
like equation free methods \cite{kevrekidis2003} or heterogeneous
multiscale methods (HMM) \cite{weinan2003}, \cite{hmm1},
\cite{acta_numerica}.  The basic idea of HMM is to combine a coarse
scale macro model, that involves some unknown quantity, with micro
problems that are solved on a short time interval and
small domain only. In the so-called upscaling process, the solution
from the micro problem is then averaged to obtain the quantity that
is needed to complete the macro model.  This is the approach that we
consider in this paper.  In particular, we choose three different
HMM macro models for \cref{eq:ll}. For the case of a periodic material
coefficient, we then investigate the upscaling error for each of the
models, in order to get an understanding of what are good ways to
set up HMM for this problem. We come to the conclusion that all
three models give very similar results and can thus be valid choices
for HMM setups. Which model to choose can thus mostly be based on
advantages related to the numerical implementation.

HMM has previously been applied to a Landau-Lifshitz
problem in \cite{arjmand1}, \cite{arjmand2}. However, in these
articles, the authors do not consider the case with a material coefficient that is highly
oscillatory in space, \cref{eq:ll}, but instead a highly
oscillatory external field with temporal oscillations.

In the remainder of this section, we shortly introduce some of the
notation that is used in the following. We furthermore describe the
homogenized solution for \cref{eq:ll} with a periodic material
coefficient as derived in \cite{paper1}, which will subsequently act
as a reference.  We continue in \Cref{sec:hmm} by describing the concept of
heterogeneous multiscale methods as well as the models considered.
In \Cref{sec:hom}, estimates for the homogenized solution and
the corresponding correctors to the HMM micro problem are stated
to provide the basis that is required for the
subsequent derivations. We moreover add an explicit description of a
particular correction term. In \Cref{sec:avg}, we derive several
lemmas regarding the averaging required for numerical
homogenization. These lay the ground for the error estimates for the
different upscaling-models, which are given in \Cref{sec:avg_ll} and
constitute the main result of this paper. Finally, in \Cref{sec:num} we present
several numerical examples in one and two space dimensions which
show the validity of the theoretical estimates.

\subsection{Preliminaries}
We consider $\ell$-periodic solutions to \cref{eq:ll} in the
$d$-dimen\-sional hypercube $\Omega = [0, \ell]^d$ and time interval
$[0, T]$, for some $\ell > 0$.  We use $Y$ to denote the
$d$-dimensional unit cell $[0, 1]^d$ and let
$\Omega_\mu := [-\mu, \mu]^d$ for a parameter $\mu$.

We denote by $H^q(\Omega)$ the standard periodic Sobolev spaces on
$\Omega$ and by $H^{q, p}(\Omega; Y)$ periodic Bochner-Sobolev
spaces on $\Omega \times Y$.  The corresponding norms are
$\|\cdot\|_{H^q}$ and $\|\cdot\|_{H^{q, p}}$. 
%
Furthermore, $W^{1, q}$ and $W^{p, \infty}$ denote standard Sobolev
spaces, $\|\cdot\|_{W^{p, \infty}}$ being the
Sobolev supremum norm on $\Omega$: given
$\partial_x^\beta u \in L^\infty(\Omega)$ for a multi-index $\beta$
with $0 \le |\beta| \le p$, it holds that
\[\|u\|_{W^{p, \infty}} = \max_{|\beta| \le p} \sup_{x \in \Omega} |\partial_x^\beta u|\,.\]
Furthermore, we make frequent use of the Sobolev inequality
stating that given $u\in H^2(\Omega)$ for dimension $d \le 3$, it holds that
\begin{align}
  \label{eq:sob}
\sup_{x\in \Omega} |u(x)| \le C \|u\|_{H^2(\Omega)}\,.
\end{align}

In general, we use capital letters to refer to solutions on the
whole domain $\Omega$ and for time $[0, T]$,
$\M: \Omega \times [0,T] \to \Real^3$. When instead considering a
micro problem set on $\Omega_\mu$, we use lower case letters to denote the solution.  By
$\bnabla \M$ we denote the Jacobian matrix of the vector-valued
function $\M$.
We assume in general that scalar and cross product between a
vector-valued and a matrix-valued function are done column-wise,
while the divergence operator is applied row-wise.

The differential operator $L$ is defined such
that for $u \in H^2(\Omega)$,
\begin{equation}
  \label{eq:L}
  L u(x) = \nabla \cdot (a^\varepsilon(x) \nabla u(x)),
\end{equation}
where $a^{\varepsilon}$ is a highly oscillatory, smooth, scalar
coefficient function.  Moreover, we denote by $\L$ the corresponding
operator acting on vector-valued functions in $\Real^3$,
\begin{align*}
  \L \m =
  \begin{bmatrix}
   L m^{(1)} &
   L m^{(2)} &
   L m^{(3)}
  \end{bmatrix}^T\,,
\end{align*}
where $m^{(i)}$ is the $i$-th component of $\m$.

\subsection{Homogenized equation for the periodic problem}
The homogenization of \cref{eq:ll} with a periodic material
coefficient was studied in \cite{paper1}. Using the setup considered
there makes it possible to obtain bounds for the errors introduced
when using HMM. It therefore is the scenario that we focus on in the
subsequent proofs.  Specifically, we assume that $\M^\varepsilon$
satisfies \cref{eq:ll} with $a^\varepsilon = a(x/\varepsilon)$ on a
domain $\Omega = [0, \ell]^d \subset \Real^d$, where $d = 1, 2$ or 3
and $\ell > 0$, and for $0 \le t \le T$.  Then it is shown in
\cite{paper1} that the corresponding homogenized equation
is
\begin{subequations} \label{eq:hom_macro}
\begin{align}
  \partial_t \M_0 &= - \M_0 \times \bnabla \cdot (\bnabla \M_0 \A^H) - \alpha
\M_0 \times \M_0 \times \bnabla \cdot (\bnabla \M_0 \A^H) \,, \\
  \M_0(x, 0) &= \M_\mathrm{init}(x)\,,
\end{align}
\end{subequations}
 for
$0 \le t \le T$ and $x \in \Omega$.
The constant homogenized coefficient matrix $\A^H \in \Real^{d \times d}$
is as in elliptic homogenization theory given by
\begin{align}\label{eq:AH}
  \A^H := \int_Y a(y) \left( \I + (\bnabla_y \boldsymbol \chi)^T \right) dy\,,
\end{align}
where $\boldsymbol \chi: \Real^d \to \Real^d$ is the solution to the elliptic {cell problem}
\begin{equation}
  \label{eq:cell_problem}
  \bnabla_y \cdot(a(y) \bnabla_y \boldsymbol \chi(y)) = - \nabla_y a(y) \,.
\end{equation}
Note that \cref{eq:cell_problem} determines $\boldsymbol \chi$ only up to a
constant. Throughout this article, we assume that this constant is chosen
such that $\boldsymbol \chi$ has zero average.

For the difference between $\M^\varepsilon$ and $\M_0$, the
following result was proved in \cite{paper1}.
\begin{theorem}\label{thm:macro_err}
  Assume that $\M^\varepsilon \in C^1([0, T]; H^2(\Omega))$ is a
  classical solution to \cref{eq:ll} with a periodic material
  coefficient $a^\varepsilon = a(x/\varepsilon)$ where
  $a \in C^\infty(\Omega)$ and that
  $a_\mathrm{min} \le a(x) \le a_\mathrm{max}$ for some constants
  $a_\mathrm{min} , a_\mathrm{max} > 0$. Assume that there is a
  constant $M$ independent of $\varepsilon$ such that
  $\|\bnabla \M^\varepsilon(\cdot, t)\|_{L^\infty} \le M$ for
  $0 \le t \le T$.  Moreover, suppose that
  $\M_0 \in C^\infty([0, T]; H^\infty(\Omega))$ is a classical
  solution to \cref{eq:hom_macro}.  We then
  have 
  \begin{align}\label{eq:main_l2_m0}
  \|\M^\varepsilon(\cdot, t) - \M_0(\cdot, t)\|_{L^2} \le C \varepsilon\,,
    \qquad 0 \le t \le T\,,
  \end{align}
where the constant $C$ is independent of $\varepsilon$ and $t$ but
depends on $M$ and $T$.
\end{theorem}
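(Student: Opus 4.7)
The plan is to follow the standard two-scale-expansion strategy for periodic homogenization, but adapted to the Landau-Lifshitz nonlinearity via an energy estimate. I would first construct an approximate solution
\[
\M^\varepsilon_{\text{app}}(x,t) = \M_0(x,t) + \varepsilon\,\boldsymbol\chi(x/\varepsilon)\,\bnabla \M_0(x,t) + \varepsilon^2 \boldsymbol\theta(x,x/\varepsilon,t),
\]
where $\boldsymbol\chi$ is the cell corrector from \cref{eq:cell_problem} and $\boldsymbol\theta$ is a second-order corrector, periodic in the fast variable $y=x/\varepsilon$, chosen so that the $\O(1)$ residual left over by the first two terms becomes a divergence of a periodic-mean-zero field. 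A direct computation using the chain rule, the cell equation, and the definition \cref{eq:AH} of $\A^H$ then yields
\[
\L\M^\varepsilon_{\text{app}} = \bnabla\cdot(\bnabla\M_0\,\A^H) + \varepsilon\,\boldsymbol\rho^\varepsilon,
\]
with $\boldsymbol\rho^\varepsilon$ uniformly bounded in $L^2(\Omega)$ by derivatives of $\M_0$, using the smoothness of $\M_0$ and $\boldsymbol\chi$.

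Next I would insert $\M^\varepsilon_{\text{app}}$ into the LL equation \cref{eq:ll}. Because $\M_0$ satisfies the homogenized equation \cref{eq:hom_macro}, the leading $\O(1)$ parts cancel, and the difference
\[
\partial_t\M^\varepsilon_{\text{app}} + \M^\varepsilon_{\text{app}}\times\L\M^\varepsilon_{\text{app}} + \alpha\,\M^\varepsilon_{\text{app}}\times\M^\varepsilon_{\text{app}}\times\L\M^\varepsilon_{\text{app}} = \varepsilon\,\boldsymbol r^\varepsilon(x,t),
\]
with $\|\boldsymbol r^\varepsilon(\cdot,t)\|_{L^2}\le C$ uniformly in $\varepsilon$ and $t\in[0,T]$. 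The initial data for $\M^\varepsilon_{\text{app}}$ differs from $\M_\mathrm{init}$ by an $\O(\varepsilon)$ term in $L^2$.

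I would then set $\e^\varepsilon = \M^\varepsilon - \M^\varepsilon_{\text{app}}$, subtract the two equations, and test against $\e^\varepsilon$. The two cross-product nonlinearities expand into sums of terms that are either linear in $\e^\varepsilon$ and $\L\e^\varepsilon$ (these produce, after integration by parts and use of the cross-product orthogonality $\e^\varepsilon\cdot(\e^\varepsilon\times\cdot)=0$, a coercive term $\alpha\int a^\varepsilon|\bnabla\e^\varepsilon|^2\mathrm{d}x$ up to lower order), or products bounded using $\|\bnabla\M^\varepsilon\|_{L^\infty}\le M$, $\|\M^\varepsilon\|_{L^\infty}\le 1$, and the smoothness of $\M_0$ and $\boldsymbol\chi$. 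The residual contributes $\varepsilon\int\boldsymbol r^\varepsilon\cdot\e^\varepsilon\,\mathrm{d}x \le C\varepsilon^2 + \tfrac{1}{2}\|\e^\varepsilon\|_{L^2}^2$. Collecting everything and applying Grönwall yields $\|\e^\varepsilon(\cdot,t)\|_{L^2}\le C\varepsilon$ on $[0,T]$, and since $\|\M^\varepsilon_{\text{app}}-\M_0\|_{L^2}\le C\varepsilon$ by construction of the ansatz, the triangle inequality gives \cref{eq:main_l2_m0}.

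The main obstacle is the quasilinear, non-coercive-looking nonlinearity: unlike the heat equation, one cannot naively integrate by parts on $\M^\varepsilon\times\L\M^\varepsilon$ and obtain a sign. The trick is that only the damping term $\alpha\M\times\M\times\L\M$ provides the dissipative $|\bnabla\e^\varepsilon|^2$ control once one uses the vector identity $\mathbf a\times\mathbf a\times\mathbf b = (\mathbf a\cdot\mathbf b)\mathbf a - |\mathbf a|^2\mathbf b$ together with $|\M^\varepsilon|=|\M_0|=1$, while the precessional term must be shown to be antisymmetric or absorbable. Handling the mixed nonlinear terms with factors of $\bnabla\M^\varepsilon$ is precisely where the uniform bound $\|\bnabla\M^\varepsilon\|_{L^\infty}\le M$ from the hypothesis is used to prevent blow-up of the Grönwall constant as $\varepsilon\to 0$.
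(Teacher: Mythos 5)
The paper does not prove this theorem itself but imports it from the companion work \cite{paper1}, and your outline is essentially the argument used there: a corrected two-scale ansatz $\M_0+\varepsilon\,\bnabla\M_0\,\boldsymbol\chi+\varepsilon^2\boldsymbol\theta$ whose residual in \cref{eq:ll} is $\O(\varepsilon)$, followed by an $L^2$ energy/Gr\"onwall stability estimate in which the damping term supplies the coercive $\alpha\int a^\varepsilon|\bnabla\e^\varepsilon|^2$ needed to absorb the terms produced by integrating $\M\times\L\e^\varepsilon$ by parts, with the hypothesis $\|\bnabla\M^\varepsilon\|_{L^\infty}\le M$ controlling the remaining nonlinear couplings. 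Your simplification of dropping the fast-time initial-layer corrector $\v$ is legitimate for this $\O(\varepsilon)$, fixed-$T$ statement, since it only shifts an $\O(\varepsilon)$ discrepancy into the initial data, which Gr\"onwall tolerates.
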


\section{Heterogeneous multiscale methods}\label{sec:hmm}
The concept of heterogeneous multiscale methods was first introduced
by E and Engquist in \cite{weinan2003}. It provides a general
approach to treat multiscale problems with scale separation, where a
description of the microscopic problem is available but would be too
computationally expensive to use throughout the whole domain.
The idea is therefore to use numerical homogenization with the goal
to get a good approximation to the effective solution of the
original problem.  In general, HMM models involve three parts:
\begin{enumerate}
\item \emph{Macro model}: an incomplete model for the whole computational domain,
  discretized with a coarse grid, that is set up in such a way that some data is missing.
\item \emph{Micro model}: an exact model discretized with a grid
  resolving the fine $\varepsilon$-scale, which is however only
  solved on a small domain, where it is feasible to use the 
  expensive description.
\item \emph{Upscaling}: an averaging procedure that uses the data
  obtained when solving the micro problem to generate the quantity
  needed to complete the macro model.
\end{enumerate}
It is important to make sure that micro and macro model
are consistent, which is typically achieved by choosing the initial
data for the micro problem as a restriction of the current macro
solution.

The HMM framework has been successfully applied to a wide range of applications;
see for instance the surveys in \cite{hmm1,acta_numerica}.
In this paper, we aim to find a good way to set up HMM for the
Landau Lifshitz equation \cref{eq:ll}.

In general, the error in the HMM solution consists of two major
components apart from discretization errors: an error term related
to the fact that the solution to the effective equation is
approximated instead of the original one and the so-called HMM
error. The HMM error in turn depends on the upscaling error, the
error introduced in the data estimation process
\cite{acta_numerica}. In case of the Landau-Lifshitz equation with a
periodic material coefficient, an estimate for the first error term
is given by \Cref{thm:macro_err}.  The $L^2$-homogenization error is
$\mathcal{O}(\varepsilon)$.  In this paper, we focus on estimates
for the upscaling error and investigate how it is influenced by
different choices of HMM-models.

We consider three different setups. All three 
are based on the same micro model, the full Landau-Lifshitz equation,
\begin{subequations}\label{eq:micro_prob}
  \begin{align}
    \partial_t \m^\varepsilon &= - \m^\varepsilon \times \L \m^\varepsilon - \alpha \m^\varepsilon \times \m^\varepsilon \times \L \m^\varepsilon
                                \,, \quad x \in \Omega, ~0 < t \le \eta, \\
    \m^\varepsilon(0, x) &= \m_\mathrm{init}(x) = \mathcal{R}(\M)\,,
\end{align}
\end{subequations}
with periodic boundary conditions. The initial data
$\m_\mathrm{init}$ is assumed to be a restriction $\mathcal{R}$ of
the macro data $\M$ such that the micro and macro model are
consistent and $|\m_\mathrm{init}| \equiv 1$.  One possible choice
to obtain such initial data is by using a normalized interpolation
polynomial based on the macro data.
In this paper, we assume a solution to the micro problem in the
whole domain $\Omega$ with periodic boundary conditions. In
practice, one would solve the micro problem only on a small domain
$[-\mu', \mu']^d$, where $0 < \mu' \ll \ell$.  However, this
requires a choice of boundary conditions which introduce some
additional error. To simplify the following analysis, we avoid
dealing with this issue here and assume a solution in the whole
domain.
For the averaging we then consider only the solution in a box
$\Omega_\mu = [-\mu, \mu]^d$ in space and an interval $[0, \eta]$ in
time, where $\mu \sim \varepsilon$ and $\eta \sim \varepsilon^2$
. This matches the scales of the fast variations in the problem as
explained in \cite{paper1}.

The other two HMM components, macro model and upscaling, differ
between the models.  We suppose that the macro models should have the
general form of the effective equation \cref{eq:hom_macro} and consider
three different choices of missing data in the model as described in the following.

\begin{itemize}
\item [\textit{(M1)}] \textit{Flux model.}
We choose the macro model
\begin{subequations}\label{eq:flux}
\begin{align}
  \partial_t \M &= - \M \times \bnabla \cdot \F_1 - \alpha \M \times \M \times \bnabla \cdot \F_1\,, \\
  \M(x, 0) &= \M_\mathrm{init}(x)\,,
\end{align}
\end{subequations}
where the missing information to complete the model is the flux
$\F_1$.  In case of a periodic material coefficient, $\F_1$ would
ideally be $\bnabla \M \A^H$. Then \cref{eq:flux} coincides with
\cref{eq:hom_macro}. To obtain $\F_1$, we average the product of the
material coefficient and gradient of the solution to the micro
problem in space and time using averaging kernels $K_\mu, K_\eta^0$
as explained in more detail in \Cref{sec:avg},
\begin{align*}
  \F_1 = \int_{0}^\eta \int_{-\mu}^\mu K^0
  _\eta(t) K_\mu(x) a^\varepsilon \bnabla \m^\varepsilon dx dt\,.
\end{align*}

\item[\textit{(M2)}] \textit{ Field model.}
Here the macro model is given by
\begin{align*}
  \partial_t \M &= - \M \times \F_2 - \alpha \M \times \M \times \F_2\,, \\
  \M(x, 0) &= \M_\mathrm{init}(x)\,,
\end{align*}
where $\F_2$ takes the role of the effective field.  In the periodic
case, $\F_2$ should hence approximate
$\bnabla \cdot (\bnabla \M \A^H)$.  In general, $\F_2$ is defined as
the average of the operator $\L$ applied to the solution to the
micro problem,
\begin{align*}
  \F_2 = \int_{0}^\eta \int_{-\mu}^\mu K_\eta^0(t) K_\mu(x) \bnabla \cdot (a^\varepsilon \bnabla \m^\varepsilon) dx dt\,.
\end{align*}

\item[\textit{(M3)}] \textit{ Torque model.}
The third macro model we consider is
\begin{align*}
  \partial_t \M &= - \F_3 - \alpha \M \times \F_3\,, \\
  \M(x, 0) &= \M_\mathrm{init}(x)\,,
\end{align*}
which means that for a periodic material coefficient, $\F_3$  should approximate the torque
$\M \times \bnabla \cdot (\bnabla \M \A^H)$. Here $\F_3$ is given by
\begin{align*}
  \F_3 = \int_{0}^\eta \int_{-\mu}^\mu K_\eta^0(t) K_\mu(x) \m^\varepsilon \times \bnabla \cdot (a^\varepsilon \bnabla \m^\varepsilon) dx dt\,.
\end{align*}
\end{itemize}

In the following, we prove estimates for the upscaling error in each
of the three models, (M1) - (M3), when $a^\varepsilon$ is
periodic,
\begin{align*}
  E_1 :&= \left| \F_1 - \bnabla \M_0 \A^H \right|, \\ \qquad
  E_2 :&= \left| \F_2 - \bnabla \cdot (\bnabla\M_0 \A^H) \right|, \\\qquad
  E_3 :&= \left| \F_3 - \M_0 \times (\bnabla \cdot (\bnabla \M_0 \A^H)) \right|,
\end{align*}
under the assumption that
$\partial_x^\beta \m_\mathrm{init}(0, 0) = \partial_x^\beta
\M_0(x_\mathrm{M}, t_\mathrm{M})$ for multi-indices $\beta$ with
$|\beta| \le 2$, where $(x_\mathrm{M}, t_\mathrm{M})$ is the
macro point we average around.

\section{Homogenized solution and correctors for a periodic micro
  problem} \label{sec:hom}

To be able to prove estimates for the upscaling errors $E_1$ - $E_3$
given a periodic material coefficient, we make use of the estimates
for the error between the actual and the homogenized solution to
\cref{eq:micro_prob} as well as the corresponding corrected
approximations that were derived in \cite{paper1}.

Let $\m^\varepsilon$ be the solution to \cref{eq:micro_prob} given
$a^\varepsilon(x) = a(x/\varepsilon)$. Then, according to
\cite{paper1}, the corresponding homogenized solution is $\m_0(x,t)$,
which for $0 \le t \le \eta$ satisfies
\begin{subequations} \label{eq:hom}
\begin{align}
  \partial_t \m_0 &= - \m_0 \times \bnabla \cdot (\bnabla \m_0 \A^H) - \alpha
\m_0 \times \m_0 \times \bnabla \cdot (\bnabla \m_0 \A^H) \,, \quad  x \in \Omega\,, \\
  \m_0(x, 0) &= \m_\mathrm{init}(x)\,,
\end{align}
\end{subequations}
with periodic boundary conditions and where $\m_\mathrm{init}$ is
chosen as in \cref{eq:micro_prob}.

We consider a short time interval $[0, T^\varepsilon]$, with
an $\varepsilon$-dependent final time
\begin{equation}
  \label{eq:Teps}
  T^\varepsilon := \varepsilon^\sigma T, \qquad  1 < \sigma \le 2.
\end{equation}
This is still sufficiently long time for the HMM micro problems with final time $\eta \sim \varepsilon^2$.
In \cite{paper1}, it is shown that for such a time interval, one
obtains improved approximations to the solution to
\cref{eq:micro_prob}, $\m^\varepsilon(x)$, when not only considering
$\m_0$ but a truncated asymptotic expansion
\begin{equation}\label{eq:asymptotic_expansion}
  \tilde \m^\varepsilon_J(x, t) = \m_0(x, t) + \sum_{j=1}^J \varepsilon^j \m_j\left(x, \frac{x}{\varepsilon}, t, \frac{t}{\varepsilon^2}\right), \qquad J > 0,
\end{equation}
where the correctors $\m_j(x, y, t, \tau)$, $j > 0$ satisfy linear
differential equations in the fast variables $y$ and $\tau$,
\begin{subequations}\label{eq:mj_linear}
  \begin{align*}
    \partial_\tau \m_j &= - \m_0 \times \L_{yy} \m_j - \alpha \m_0 \times \m_0 \times \L_{yy} \m_j - \f_j, \\
    \m_j(x, y, t, 0) &= 0.
  \end{align*}
\end{subequations}
Here the forcing $\f_j$ depends only on the lower order terms $\m_k$, $0 \le k < j$.
The operator $\L_{yy}$ is the vector-equivalent to $L_{yy}$, which is defined such that
for $u(x, y) \in H^{0,2}(\Omega; Y)$,
\begin{equation}
  \label{eq:Lyy}
  L_{yy} u(x, y) = \nabla_y \cdot (a(y) \nabla_y u(x, y)).
\end{equation}
As explained in \cite{paper1},
the form of the
first corrector $\m_1$ is 
\begin{equation}
  \label{eq:m1}
  \m_1(x, y, t, \tau) = \bnabla \m_0(x,t) \boldsymbol \chi(y) + \v(x, y, t, \tau)\,,
\end{equation}
where $\boldsymbol \chi$ is the solution to  \cref{eq:cell_problem}
and $\v$ is an oscillatory, decaying term that is discussed in Section \ref{sec:v}.

\subsection{Energy and error estimates}
For convenience of the reader, we here give a summary of the
estimates from \cite{paper1} that are most crucial for the
derivations in this paper. In contrast to \cite{paper1}, we here
require higher regularity of $\m_0$ for reasons of simplicity.
Otherwise we use the same assumptions.
In particular, we assume that
\begin{itemize}
\item [(A1)] the material coefficient $a \in C^\infty(\Omega)$ is
  a periodic function such that there are positive constants
  $a_\mathrm{min}, a_\mathrm{max} > 0$ satisfying
  $a_\mathrm{min} < a(x) < a_\mathrm{max}$.
\item [(A2)] the initial data function is normalized,
  $|\m_\mathrm{init}(x)| = 1$, which implies that
  $|\m^\varepsilon(x, t)| = |\m_0(x, t)| = 1$ for any $t \ge
  0$. From this property it follows that given a
  multi-index $\beta$ with $|\beta|= 1$,
  \[0 = \partial^\beta |\m_0|^2 = 2 \m_0 \cdot \partial^\beta \m_0,\]
  and thus $\m_0$ and $\bnabla \m_0$ are orthogonal.
\item [(A3)] the damping coefficient $\alpha$ is positive and it
  holds that $0 < \alpha \le 1$. Moreover, $\varepsilon = \ell/n$
  for some $n \in \mathbb{N} \gg 1$, which implies
  $0 < \varepsilon \ll 1$.

\item
  [(A4)]$\m^\varepsilon \in C^1([0, T^\varepsilon];
  H^{s+1}(\Omega))$ with $s \ge 1$ is a classical solution to
  \cref{eq:micro_prob} and there is a constant $M$ independent of
  $\varepsilon$ such that
  \[\|\bnabla \m^\varepsilon(\cdot, t)\|_{L^\infty} \le M\,, \qquad 0 \le t \le T^\varepsilon\,.\]
\item [(A5)] $\m_0 \in C^\infty(0, T; H^{\infty}(\Omega))$ is a
  classical solution to \cref{eq:hom}.
\end{itemize}

As shown in \cite{paper1}, it then holds for any $q \ge 0$ that the
$H^{q, \infty}$-norms of the first two correctors, $\m_1$ and
$\m_2$, are bounded uniformly in the fast time variable $\tau$ ,
while the norms of higher order correctors grow algebraically with
$\tau$.  Specifically, it holds for all $p, q \ge 0$ and
$0 \le t \le
T^\varepsilon$
that
\begin{align}\label{eq:mj_norm_bound}
  \|\m_j(\cdot, \cdot, t, t/ {\varepsilon^2})\|_{H^{q, p}} \le C
  \begin{cases}
    1, & j = 1, 2, \\
    \varepsilon^{(\sigma-2)(j-2)}, & j \ge 3\,,
  \end{cases}
\end{align}
where the constant $C$ depends on $T$ but is independent of
$\varepsilon$. For the approximating $\tilde \m_J^\varepsilon$ in \cref{eq:asymptotic_expansion}, it
holds for $0 \le t \le T^\varepsilon$ that
\begin{align}\label{eq:tilde_mJ_infty}
  \|\tilde \m_J^\varepsilon\|_{W^{q, \infty}} \le C \varepsilon^{\min(0, 1-q)}, \qquad q \ge 0.
\end{align}

Moreover, consider the error introduced when approximating
$\m^\varepsilon$ by $\tilde \m^\varepsilon_J$.
Under the given assumptions, it
holds for $0 \le t \le T^\varepsilon$ and $q \le s$ that
\begin{equation}
  \label{eq:error_mJbest}
  \|\m^\varepsilon(\cdot, t) - \tilde \m^\varepsilon_J(\cdot, t)\|_{H^q}
  \le C \varepsilon^{2 - q + (\sigma - 1)(J-1)}\,, \qquad J \ge 1,
\end{equation}
where the constant $C$ is independent of $\varepsilon$ but depends
on $M$ in (A4) and $T$. This estimate shows that the approximations improve
with increasing $J$ on the considered time
interval.

\subsection{The correction term \v}\label{sec:v}

In \cite{paper1}, it was shown that both $\m_1$ and the correction term $\v$, which
is part of $\m_1$ as given in \cref{eq:m1}, are orthogonal to $\m_0$,
\begin{equation}
  \label{eq:m1_v_orth}
  \v \perp \m_0, \qquad \m_1 \perp \m_0\,.
\end{equation}
Moreover, it was proved that given (A1)-(A5), there are constants
$\gamma > 0$ and $C$ independent of $\varepsilon$ such that
\begin{equation}
  \label{eq:v_norm_decay}
  \|\partial_t^k \v(\cdot, \cdot, t, \tau)\|_{H^{p, q}} \le C \varepsilon^{-\gamma \tau}, \qquad k, p, q \ge 0.
\end{equation}
For the analysis in this paper, an explicit formulation for  $\v(x, y, t, \tau)$ 
is required. To obtain such a description,
we use the linear equation 
that was derived in \cite{paper1},
\begin{subequations} \label{eq:v_eq}
  \begin{align}
      \partial_\tau \v &= - \m_0 \times \L_{yy} \v- \alpha \m_0 \times \m_0 \times \L_{yy} \v\,, \\
    \v(x, y, t, 0) &= - \bnabla \m_0(x, t) \boldsymbol \chi(y)\,.
\end{align}
\end{subequations}
We now introduce a lemma that shows a connection between
differential equations of the same type as \cref{eq:v_eq} to a
system of parabolic equations that become Schr\"odinger equations as
$\alpha \to 0$. Then we go on and use that result to derive an
explicit solution to \cref{eq:v_eq} in terms of the eigenfunctions
of the operator $-L_{yy}$.

\begin{lemma}\label{lemma:schrodinger_connection}
  Suppose $\f \in H^2(Y, \Real^3)$ and $\b \in \Real^3$ is a given constant vector with $|\b| =
  1$. Then the solution $\w$ to
    \begin{equation}
      \label{eq:lin_prob}
      \begin{split}
        \partial_t \w(y, t) &= - \b \times \L_{yy} \w(y,t) - \alpha \b \times \b \times \L_{yy} \w(y,t) , \qquad y \in Y, ~t > 0\,, \\
        \w(y, 0) &= \f(y)\,,
      \end{split}
    \end{equation}
    with periodic boundary conditions is given by
   \begin{equation}
    \label{eq:m_schrodinger_def}
    \w(y, t) := \b \b^T \f(y) + (\I - \b \b^T) \Re(\u(y, t)) + \b \times \Im(\u(y, t))\,,
  \end{equation}
  where $\u \in C^1(0, T; H^2(Y; \mathbb{C}^3))$ solves
  \begin{equation}
    \label{eq:schrodinger_type_eq}
    \begin{split}
      \partial_t \u(y, t) &= - (i - \alpha) \L_{yy} \u(y, t) , \qquad y \in Y, ~t > 0\,, \\
      \u(y, 0) &= \f(y)\,
    \end{split}
  \end{equation}
  with periodic boundary conditions.
  \end{lemma}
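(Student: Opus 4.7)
The plan is to verify directly that the formula in \cref{eq:m_schrodinger_def} satisfies both the initial condition and the PDE of \cref{eq:lin_prob}, and then appeal to uniqueness of that linear initial-value problem. The initial condition is immediate: since $\f$ is $\Real^3$-valued, $\u(y,0) = \f(y)$ gives $\Re\u(y,0) = \f(y)$ and $\Im\u(y,0) = \mathbf{0}$, so \cref{eq:m_schrodinger_def} reduces at $t=0$ to $\w(y,0) = \b\b^T\f + (\I - \b\b^T)\f = \f$, using $\b\b^T + (\I - \b\b^T) = \I$.

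For the evolution, I would write $\u = \mathbf{u}_R + i\,\mathbf{u}_I$ with $\mathbf{u}_R,\mathbf{u}_I$ real-valued. Separating real and imaginary parts of \cref{eq:schrodinger_type_eq} yields the coupled system
\begin{equation*}
\partial_t \mathbf{u}_R = \alpha\,\L_{yy}\mathbf{u}_R + \L_{yy}\mathbf{u}_I,\qquad
\partial_t \mathbf{u}_I = -\L_{yy}\mathbf{u}_R + \alpha\,\L_{yy}\mathbf{u}_I.
\end{equation*}
Since $\b\b^T\f(y)$ is time-independent, differentiating \cref{eq:m_schrodinger_def} in $t$ and substituting gives
\begin{equation*}
\partial_t \w = (\I - \b\b^T)\bigl(\alpha\,\L_{yy}\mathbf{u}_R + \L_{yy}\mathbf{u}_I\bigr) + \b\times\bigl(-\L_{yy}\mathbf{u}_R + \alpha\,\L_{yy}\mathbf{u}_I\bigr).
\end{equation*}

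To match this against the right-hand side of \cref{eq:lin_prob}, I would apply the linear operator $\L_{yy}$ term by term to \cref{eq:m_schrodinger_def} and exploit three identities valid because $|\b|=1$: for any $\mathbf{v}\in\Real^3$,
\begin{equation*}
\b\times(\b\b^T\mathbf{v}) = \mathbf{0},\qquad
\b\times\bigl((\I - \b\b^T)\mathbf{v}\bigr) = \b\times\mathbf{v},\qquad
\b\times(\b\times\mathbf{v}) = -(\I - \b\b^T)\mathbf{v}.
\end{equation*}
The first identity eliminates the $\b\b^T\f$-contribution from both $-\b\times\L_{yy}\w$ and $-\alpha\,\b\times\b\times\L_{yy}\w$, while the other two reorganize the remaining pieces into precisely the expression for $\partial_t\w$ displayed above, verifying the PDE. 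Uniqueness of solutions to \cref{eq:lin_prob} in $C^1([0,T];H^2(Y;\Real^3))$ follows from a standard energy estimate for this linear problem with frozen direction $\b$ (or from the machinery already used in \cite{paper1}), which together with the construction above concludes the proof.

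The only real hurdle is careful algebraic bookkeeping of the components parallel and perpendicular to $\b$; there is no deep conceptual obstacle. The underlying reason the ansatz works is that \cref{eq:m_schrodinger_def} is exactly the splitting which diagonalizes the linearized Landau--Lifshitz flow around the constant direction $\b$: the parallel component $\b\b^T\f$ is conserved (both cross products annihilate it), while the perpendicular motion, combining precession and damping, is precisely the real/imaginary decomposition of the complex flow governed by the coefficient $-(i-\alpha)$ in \cref{eq:schrodinger_type_eq}.
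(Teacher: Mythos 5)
Your proposal is correct and follows essentially the same route as the paper: both verify the initial condition directly and then check the PDE by applying $\L_{yy}$ to the ansatz and using the vector triple product identities for the unit vector $\b$ together with the Schr\"odinger-type equation; your explicit split $\u = \mathbf{u}_R + i\,\mathbf{u}_I$ is only a cosmetic variant of the paper's use of $\Re$ and $\Im$. The added remark on uniqueness via an energy estimate is a reasonable supplement that the paper leaves implicit.
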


  \begin{proof}
    As $\u(y, 0)$ is real, it follows immediately that $\w$ given by \cref{eq:m_schrodinger_def}
    satisfies the initial condition in \cref{eq:lin_prob}.
    Moreover, since $\b$ is constant,
    \begin{align*}
      \b \times \L_{yy} \w &= \b \times [\b \b^T \L_{yy} \f + (\I - \b \b^T) \L_{yy} \Re(\u)
                         + \b \times \L_{yy} \Im(\u)] \\
        &= \b \times \Re(\L_{yy} \u) + \b \times \b \times \Im(\L_{yy} \u)\\
        &= \b \times \Re(\L_{yy} \u) - (\I - \b \b^T) \Im(\L_{yy} \u)\,,
    \end{align*}
    where we used the vector triple product identity
     for the last step. It then follows that
    \[\b \times \b \times \L_{yy} \w = \b \times \b \times \Re(\L_{yy} \u) - \b \times \Im(\L_{yy} \u)\,.\]
    It thus holds that
    \begin{align*}
       \b \times \L_{yy} \w &+ \alpha \b \times \b \times \L_{yy} \w \\
      &= \b \times [\Re(\L_{yy} \u) - \alpha \Im(\L_{yy}\u)] + \b \times \b \times [\Im(\L_{yy} \u)
        + \alpha \Re(\L_{yy} u)]\\
      &= \b \times \Im[(i - \alpha) \L_{yy} \u] - \b \times \b \times \Re[(i - \alpha) \L_{yy} \u] .
    \end{align*}
    Using \cref{eq:schrodinger_type_eq} and exploiting the facts that $\b$ is
    constant and $\f$ is independent of time, we obtain
    \begin{align*}
       \b \times \L_{yy} \w + \alpha \b \times \b \times \L_{yy} \w
      &= - \b \times \Im (\partial_t \u) + \b \times \b \times \Re(\partial_t \u) \\
      &= - \partial_t [(\I - \b \b^T) (\Re(\u)) + \b \times \Im(\u)] = - \partial_t \w\,,
    \end{align*}
    which shows that $\w$ given by \cref{eq:m_schrodinger_def} satisfies \cref{eq:lin_prob}.
  \end{proof}

  In the following, let $\phi_j(y), \omega_j$ be the eigenfunctions
  and eigenvalues of the operator $- L_{yy}$, where $L_{yy}$ is
  given by \cref{eq:Lyy}, on $Y$ with periodic boundary conditions,
\[- L_{yy} \phi_j(y) = \omega_j \phi_j(y) \quad \text{on }
  Y\,.\]
As $- L_{yy}$ is a periodic elliptic operator
it
holds according to standard theory that its
eigenvalues are strictly positive and bounded away from zero except
for the first eigenvalue, $\omega_0$, which is zero \cite{Krein:48},
\[\omega_0 = 0, \qquad 0 < \omega_j, \quad\text{for}\quad  j  > 0.\]
Moreover, the eigenfunctions $\phi_j$ form an orthonormal basis for $L^2(Y)$.  In
particular, the first eigenfunction, corresponding to $\omega_0$, is
the constant function $\phi_0 \equiv 1$. The eigenfunctions can be
chosen to be real, which they are assumed to be in the following.
We then obtain the following expression for the correction term $\v$.
\begin{lemma}\label{lemma:v}
  Let $\f_\v(x,t) := \bnabla \m_0(x,t) - i \m_0(x,t) \times \bnabla \m_0(x,t)$ and
\begin{align}\label{eq:psi23}
  \Psi(y, \tau) :=  \sum_{j=1}^\infty \boldsymbol \chi_j e^{(- \alpha+i) \omega_j \tau} \phi_j(y)\,,
\end{align}
where $\phi_j$ and $\omega_j$, $0 \le j$,  are the eigenfunctions and eigenvalues
of $-L_{yy}$ and $\boldsymbol \chi_j$ are expansion coefficients such that $\boldsymbol \chi(y) = \sum_j \boldsymbol \chi_j \phi_j(y)$. Then
\[\v(x, y, t, \tau) = - \mathrm{Re}(\f_\v(x,t) \Psi(y, \tau))\]
solves \cref{eq:v_eq}\,.
\end{lemma}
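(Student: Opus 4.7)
My approach is to reduce \cref{eq:v_eq} to a linear Schr\"odinger-type equation via \Cref{lemma:schrodinger_connection}, solve it by spectral decomposition of $-L_{yy}$, and then read off the real and imaginary parts. First, I would freeze the slow variables $(x,t)$ and apply \Cref{lemma:schrodinger_connection} with $\b := \m_0(x,t)$, which is a unit vector by (A2), and $\f(y) := -\bnabla \m_0(x,t)\,\boldsymbol\chi(y)$. Since the right-hand side of \cref{eq:v_eq} has exactly the form of \cref{eq:lin_prob} in $(y,\tau)$, this identifies $\v$ with the function in \cref{eq:m_schrodinger_def} in terms of a complex auxiliary $\u(y,\tau)$ solving \cref{eq:schrodinger_type_eq} with initial data $\f$.

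Next I would solve \cref{eq:schrodinger_type_eq} by expanding in the eigenbasis $\{\phi_j\}$ of $-L_{yy}$. Writing $\boldsymbol\chi(y) = \sum_j \boldsymbol\chi_j \phi_j(y)$ and using $L_{yy}\phi_j = -\omega_j\phi_j$, each mode decouples into a scalar ODE whose solution is $e^{(-\alpha+i)\omega_j\tau}$ times the initial coefficient, giving
\[
  \u(y,\tau) = -\bnabla \m_0(x,t)\sum_{j\ge 0} \boldsymbol\chi_j\, e^{(-\alpha+i)\omega_j\tau}\phi_j(y).
\]
Because $\boldsymbol\chi$ is normalized to have zero average and $\phi_0\equiv 1$, one has $\boldsymbol\chi_0 = 0$, so the sum truncates to $j\ge 1$ and the zero eigenvalue $\omega_0=0$ plays no role. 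This produces $\u = -\bnabla \m_0\,\Psi$ with $\Psi$ exactly as in \cref{eq:psi23}.

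Finally I would substitute back into \cref{eq:m_schrodinger_def}. The orthogonality $\m_0\perp\bnabla\m_0$ columnwise from (A2), i.e.\ $\m_0^T\bnabla\m_0 = 0$, forces $\m_0\m_0^T\u \equiv 0$ and $(\I-\m_0\m_0^T)\Re(\u) = \Re(\u)$, so \cref{eq:m_schrodinger_def} collapses to $\v = -\bnabla\m_0\,\Re(\Psi) - \m_0\times\bnabla\m_0\,\Im(\Psi)$. Expanding $\Re(\f_\v\,\Psi)$ with $\f_\v = \bnabla\m_0 - i\,\m_0\times\bnabla\m_0$ yields the same expression with the opposite sign, giving the claimed identity $\v = -\Re(\f_\v\,\Psi)$.

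There is no real obstacle here: once the Schr\"odinger reformulation is in hand, the rest is spectral calculus plus two uses of $\m_0^T\bnabla\m_0 = 0$. The only background facts required beyond \Cref{lemma:schrodinger_connection} are completeness of the periodic elliptic eigenbasis for $-L_{yy}$ on $L^2(Y)$ (standard under (A1)) and the zero-average normalization of $\boldsymbol\chi$. The one point worth double-checking is that the conventions of the preliminaries—columnwise cross products and the interpretation of $\bnabla\m_0\,\boldsymbol\chi$ as a $3$-vector built from a $3\times d$ matrix and a $d$-vector—are applied consistently in the final algebraic identification.
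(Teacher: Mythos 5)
Your proposal is correct and follows essentially the same route as the paper's proof: freeze the slow variables, apply \Cref{lemma:schrodinger_connection} with $\b=\m_0$ and $\f=-\bnabla\m_0\boldsymbol\chi$, solve the resulting Schr\"odinger-type system by eigenexpansion of $-L_{yy}$ (using $\boldsymbol\chi_0=0$), and simplify via $\m_0^T\bnabla\m_0=0$. The final algebraic identification with $-\Re(\f_\v\Psi)$ also matches the paper's computation.
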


\begin{proof}
  Since $\v$ satisfies a linear differential equation in the fast
  variables, we write $\v(y, \tau)$ and suppress the dependence on
  the slow variables, $x$ and $t$, in the notation throughout this
  proof.  Note also that with respect to the fast variables only,
  $\m_0$ and $\bnabla \m_0$ are constant.

  By \Cref{lemma:schrodinger_connection} and using the fact that by (A2),
  $\m_0$ and $\bnabla \m_0$  are orthogonal to each other, we find
  that the solution to \cref{eq:v_eq} is
  \begin{align}\label{eq:v_lemma21}
    \v(y, \tau) &= (\I - \m_0 \m_0^T) \Re(\u(y, \tau)) + \m_0 \times \Im(\u(y, \tau)),
  \end{align}
  where $\u$ is the solution to
  \begin{equation}\label{eq:schrodinger_v}
    \begin{split}
      \partial_\tau \u(y, \tau) &= - (i - \alpha) \L \u(y, \tau)\,, \qquad y \in Y, \tau > 0,\\
      \u(y, 0) &= - \bnabla \m_0 \boldsymbol \chi(y)\,.
    \end{split}
  \end{equation}
  As \cref{eq:schrodinger_v} is a system of three decoupled
  equations, we can consider each equation separately and solve it
  in terms of the eigenfunctions of $-L_{yy}$. Let $u(y, \tau)$
  denote the first component in $\u$. Then we can define $u_j(\tau)$
  such that
  \[u(y, \tau) = \sum_{j = 0}^\infty u_j(\tau) \phi_j(y)
  \,.\]
  Note that $\boldsymbol \chi_0 = 0$ as $\boldsymbol \chi$ has zero average by definition.
  By \cref{eq:schrodinger_v}
  and the orthogonality of the eigenfunctions, we deduce that
  \[\partial_\tau u_j(\tau) = (i - \alpha) \omega_j u_j(\tau)\,, \qquad u_j(0) = - \nabla m_0^{(1)} \cdot \boldsymbol \chi_j,\]
  and consequently,
  \[u_j(\tau) = - \nabla m_0^{(1)} \cdot \boldsymbol \chi_j e^{(i - \alpha)
    \omega_j \tau}\,.\]
  For the second and third components in $\u$, we obtain the same
  result but with initial conditions involving $\nabla m_0^{(2)}$ and
  $\nabla m_0^{(3)}$, respectively. Hence, in total it holds that
  \[\u(y, \tau) = - \bnabla \m_0 \sum_{j=1}^\infty  \boldsymbol \chi_j e^{(i - \alpha) \omega_j \tau} \phi_j(y) = - \bnabla \m_0 \Psi(y, \tau)\,,\]
  where $\Psi(y, \tau)$ is defined as in \cref{eq:psi23}. Putting
  this explicit expression for $\u$ into \cref{eq:v_lemma21}, then results in
  \begin{align*}
    \v(y, \tau) 
&= - \bnabla \m_0 \Re(\Psi(y, \tau)) - \m_0 \times \bnabla \m_0 \Im(\Psi(y, \tau)) \\ &=
- \Re(\bnabla \m_0 \Psi(y, \tau) - i \m_0 \times \bnabla \m_0 \Psi(y, \tau)).
  \end{align*}
  This completes the proof.
\end{proof}
To gain a more intuitive understanding, note that $\v(y, \tau)$ can
also be written as
\begin{align}\label{eq:v_sin_cos}
  \v(y, \tau) &= - \sum_{j=1}^\infty \boldsymbol \chi_j \phi_j(y) e^{-\alpha \omega_j \tau} [\cos(\omega_j \tau) \bnabla \m_0  + \sin(\omega_j \tau) \m_0 \times \bnabla \m_0]\,.
\end{align}
As $\bnabla \m_0$ and $\m_0 \times \bnabla \m_0$ are orthogonal to
$\m_0$ and each other, this clearly shows that $\v$ lies in the
subspace orthogonal to $\m_0$ and can be written in terms of two
orthogonal vectors spanning this subspace multiplied by coefficients
that oscillate with $\tau$. For $\alpha > 0$, all the components of
$\v$ are damped away with increasing $\tau$, with stronger damping
for higher modes. 
Note that the sum in \cref{eq:v_sin_cos} starts from $j = 1$. There
is no contribution from the constant mode, indicating that $\v$ has
zero average.

\section{Averaging}\label{sec:avg}

In order to get a good approximation of the missing quantity for the
macro model in our HMM scheme, it is crucial to have efficient
averaging techniques that allow us to control how fast the averaged
micro model data converges to the required effective quantity. To
achieve this, one can use smooth, compactly supported
averaging kernels as introduced in \cite{stiff}, \cite{doghonay1}.

\begin{definition}[\cite{doghonay1}]\label{def:kernel}
A function $K$ is in the space of smoothing kernels $\mathbb{K}^{p, q}$ if
\begin{enumerate}
\item $K^{(q+1)} \in BV(\Real)$ and $K$ has compact support in $[-1, 1]$, $K \in C_c^{q}([-1, 1])$.
\item $K$ has $p$ vanishing moments,
  \[\int_{-1}^1 K(x) x^r dx =
  \begin{cases}
    1\,, & r = 0\,,\\
    0\,, & 1 \le r \le p\,.
  \end{cases}\]
\end{enumerate}
\end{definition}

Typically, we do not want to average over $[-1, 1]$ but over small
boxes of size proportional to $\varepsilon$ or $\varepsilon^2$. For
this purpose, let $K_\mu(x)$ denote a scaled version of $K(x)$,
\begin{align*}
  K_\mu(x) = \frac{1}{\mu} K\left(\frac{x}{\mu}\right)\,.
\end{align*}
Moreover, when considering problems in $d$ space dimensions with $d > 1$, $K_\mu(x)$ is to be understood as
\[K_\mu(x) = K_\mu(x_1) \cdots K_\mu(x_d)\,.\]
Note that as $K$ has compact support and $K^{(q+1)} \in BV(\Real)$,
it holds that $K \in W^{1, q+1}(\Real)$ and $K \in L^2(\Real)$.

\subsection{Kernels $K^0$}

Often, the averaging kernels used for HMM are chosen to be symmetric
around zero and have nonzero-values almost everywhere in $[-1, 1]$.
However, for our application it is
advantageous to do time averaging such that we obtain an
approximation for the effective quantity at time $t = 0$ based only
on the values of the microscopic solution for $t \ge 0$.
As the subsequent proofs require kernels $K \in \mathbb{K}^{p, q}$,
we therefore show that $\mathbb{K}^{p, q}$ contains a subspace
$\mathbb{K}_0^{p, q}$ such that $K^0(t) = 0$ for $t \le 0$ when
$K^0 \in \mathbb{K}_0^{p, q}$.
To construct such kernels, consider the ansatz
\begin{equation}
  \label{eq:k_0}
  K^0(t) =
\begin{cases}
  t^{q+1} (1-t)^{q+1} P(t)\,, & 0 < t < 1 \\
  0\,,& otherwise\,,
\end{cases}
\end{equation}
where $P$ is a polynomial in $\mathbb{P}^p$, the space of of
polynomials of degree $p$,
\[P(t) = c_0 + c_1 t + ... + c_p t^p\,.\]
As explained in \cite{henrik2}, it is beneficial to choose this type
of ansatz since it typically results in better numerical stability
compared to an approach where the coefficients of $K^0$ are computed
directly.

One can easily see that due to the term $t^{q+1} (1-t)^{q+1}$, the first $q$
derivatives of $K^0$ as given by \cref{eq:k_0} vanish at zero and
one, which together with continuity implies that the first
requirement in \cref{def:kernel} is satisfied.

To show that there indeed exists a unique polynomial $P$ in
$\mathbb{P}^p$ such that $K^0$ as given in \cref{eq:k_0} also
satisfies the second requirement in \Cref{def:kernel} and hence is
in $\mathbb{K}^{p, q}$, we define the weighted inner product
$ \langle \cdot, \cdot\rangle_w$ by
\begin{align*}
\langle u, v \rangle_w := \int_{0}^1 u(t) v(t) t^{q+1} (1-t)^{q+1} dt, \qquad \|u\|_w := \langle u, u\rangle_w\,.
\end{align*}
This allows us to rewrite the 
second condition in \Cref{def:kernel} as
\begin{equation}\label{eq:k_cond_inner}
  \begin{split}
  \langle P, 1 \rangle_w &= 1, \\
  \langle P, t^k \rangle_w &= 0\,, \quad 1 \le k \le p\,.
    \end{split}
\end{equation}
Let now $\phi_j, $ $j = 0, \ldots , p$ be orthogonal polynomials with
respect to $\langle \cdot, \cdot \rangle_w$, satisfying the
recurrence formula
\begin{align*}
  \phi_0 = 1\,, \quad \phi_1 = (x - \alpha_0) \phi_0 \,, \quad \phi_{k+1} = (x-\alpha_k) \phi_k - \beta_k \phi_{k-1}\,,
\end{align*}
where $\alpha_k = \frac{\langle \phi_k, x \phi_k\rangle_w}{\|\phi_k\|_w^2}$ and $\beta_k =
\frac{\|\phi_k\|_w^2}{\|\phi_{k-1}\|_w^2}$.
Then it holds that $\phi_j \in \mathbb{P}^j$
and together the
$\phi_j, j = 0, \ldots, p$ form an orthogonal basis for $\mathbb{P}^p$. We can
hence expand
\begin{align}\label{eq:k_poly_exp}
  P(t) = \sum_{j=1}^p p_j \phi_j(t)\,, \qquad t^k = \sum_{j=0}^p c_{jk} \phi_j(t)\,,
\end{align}
where the coefficients $c_{jk}$ are uniquely determined
\cite{powell}. In particular, $c_{jj} = 1$ and $c_{jk} = 0$ for
$k > j$.  Expressing the inner product in \cref{eq:k_cond_inner} in
terms of the expansions \cref{eq:k_poly_exp} yields
\begin{align*}
  \langle P, t^k \rangle_w = \sum_{j=1}^k p_j c_{jk} \|\phi_j\|_w^2\,,
\end{align*}
which implies that \cref{eq:k_cond_inner} is satisfied when the
coefficients $p_j$ are the solution to 
\begin{align*}
  \begin{bmatrix}
    \|\phi_0\|^2_w & 0 & 0 & \dots & 0 \\
    c_{01} \|\phi_0\|^2_w & \|\phi_1\|^2_w & 0 & \dots & 0\\
    c_{02} \|\phi_0\|^2_w & c_{12} \|\phi_1\|^2_w & \|\phi_2\|^2_w & \dots & 0\\
    \vdots & \vdots & & \ddots & \\
    c_{0p} \|\phi_0\|^2_w & c_{1p} \|\phi_1\|^2_w & \dots & & \|\phi_p\|^2
  \end{bmatrix}
  \begin{bmatrix}
    p_0 \\ p_1 \\ p_2 \\ \vdots \\ p_p
  \end{bmatrix}
=
  \begin{bmatrix}
    1 \\ 0 \\ 0 \\ \vdots \\ 0
  \end{bmatrix}\,.
\end{align*}
Since the matrix here is triangular with strictly positive diagonal
elements, the system has a unique solution, which proves that there
exists a unique polynomial $P$ of degree at most $p$ such that $K^0 \in \mathbb{K}_0^{q, p} \subset \mathbb{K}^{q, p}$.

\begin{remark}
In practice, there is a quicker way to determine the coefficients $c_j$ of the polynomial $P(t)$.
 Let $I_j = \int_0^1 t^{q +1+j} (1-t)^{q+1} dt$, then
it has to hold that the vector containing the coefficients $c_j$ solves the linear system
\begin{align*}
  \begin{bmatrix}
    I_0 & I_1 & ... & I_p \\
    I_1 & I_2 & ... & I_{p+1} \\
    \vdots & & \ddots & \\
    I_r & I_{p+1} & \dots & I_{2p}
  \end{bmatrix}
\begin{bmatrix}
c_0 \\ c_1 \\ \vdots \\ c_p
\end{bmatrix} =
\begin{bmatrix}
1 \\ 0 \\ \vdots \\ 0
\end{bmatrix}\,.
\end{align*}
\end{remark}

\subsection{Averaging in space}

The following lemma from \cite{doghonay1} gives a precise
convergence rate in terms of $\varepsilon/\eta$ when averaging a
purely periodic function $f$ with a kernel over a one-dimensional
interval. By choosing a kernel with high regularity, one can achieve
very fast convergence to the corresponding average.

\begin{lemma}[\cite{doghonay1}]\label{lemma:doghonay1}
Let $f: \Real \to \Real$ be a 1-periodic continuous
  function, and let $K \in \mathbb{K}^{p, q}$. Then, with $\bar f = \int_0^1 f(y) dy$,
  \begin{align*}
    \left|\int_\Real K_\mu(x) f\left(\frac{x}{\varepsilon}\right) dx - \bar f\right| \le
C |f|_\infty \left(\frac{\varepsilon}{\mu}\right)^{q+2}
  \end{align*}
  and when $r \in \mathbb{Z}^+$,
  \begin{align*}
    \left|\int_\Real K_\mu(x) x^r f\left(\frac{x}{\varepsilon}\right) dx\right| \le
C
    \begin{cases}
      |f|_\infty \left(\frac{\varepsilon}{\mu}\right)^{q+2} \mu^r\,, & 1 \le r \le p\,, \\
      |f|_\infty \left(\frac{\varepsilon}{\mu}\right)^{q+2} \mu^r + |\bar f|\mu^r\,, & r > p\,,
    \end{cases}
  \end{align*}
  where the constant $C$ is independent of $\varepsilon$, $\mu$,
  $f$ or $x$ but may depend on $K, p, q$ and $r$.
\end{lemma}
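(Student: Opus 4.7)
The lemma is cited from \cite{doghonay1}, and the proof I would give follows a standard Fourier-analytic template. The plan is to expand the periodic $f$ in a Fourier series and exploit the decay of the Fourier transform of $K$ that comes from its smoothness, combined with the vanishing-moments property to handle the weight $x^r$.

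First I would record the key estimate for the Fourier transform of $K$. Since $K \in C^q_c([-1,1])$ with $K^{(q+1)} \in BV(\Real)$, integration by parts $q+2$ times in $\hat K(\xi) = \int K(x) e^{-i\xi x} dx$ yields
\begin{equation*}
|\hat K(\xi)| \le \frac{C_K}{(1+|\xi|)^{q+2}}, \qquad \xi \in \Real,
\end{equation*}
where the final integration by parts produces the BV-norm of $K^{(q+1)}$ as the constant. Writing $f(y) = \sum_{k\in\mathbb{Z}} \hat f_k e^{2\pi i k y}$ with $\hat f_0 = \bar f$ and $|\hat f_k| \le |f|_\infty$, and using the scaling $\int K_\mu(x) e^{2\pi i k x/\varepsilon}dx = \hat K(2\pi k \mu/\varepsilon)$, a term-by-term integration (justified by the convergent Fourier expansion since $f$ is continuous and the integral is absolutely convergent) gives
\begin{equation*}
\int_\Real K_\mu(x) f(x/\varepsilon)\,dx \;=\; \bar f + \sum_{k\neq 0} \hat f_k\, \hat K\!\left(\tfrac{2\pi k \mu}{\varepsilon}\right).
\end{equation*}
The $k=0$ term produces $\bar f$ since $\hat K(0) = \int K = 1$, and the remaining sum is bounded by $C|f|_\infty (\varepsilon/\mu)^{q+2} \sum_{k\neq 0} |k|^{-(q+2)}$, which is finite for $q \ge 0$. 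This proves part one.

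For part two I would absorb $x^r$ into a modified kernel: set $\tilde K(y) := y^r K(y)$, so that $x^r K_\mu(x) = \mu^r (\tilde K)_\mu(x)$ where $(\tilde K)_\mu$ is the usual $\mu$-rescaling. Since $y^r$ is smooth, $\tilde K$ inherits the regularity $\tilde K \in C^q_c([-1,1])$ with $\tilde K^{(q+1)} \in BV$, so the Fourier-decay estimate above still applies (with a constant depending on $r$). Repeating the Fourier expansion argument yields
\begin{equation*}
\int_\Real K_\mu(x) x^r f(x/\varepsilon)\,dx \;=\; \mu^r \bar f \int_\Real \tilde K(y)\,dy \;+\; O\!\left(|f|_\infty \mu^r (\varepsilon/\mu)^{q+2}\right).
\end{equation*}
When $1 \le r \le p$, the leading term vanishes by the $p$ vanishing moments of $K$, giving the first case of the bound; when $r > p$, the zeroth moment of $\tilde K$ is in general nonzero but bounded, producing the extra $|\bar f| \mu^r$ contribution in the second case.

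The only technically delicate step is the Fourier-decay estimate for $\hat K$ given only that $K^{(q+1)} \in BV$ rather than in $L^1$; this is handled by writing the $(q+2)$-th boundary-free integration by parts as a Riemann--Stieltjes integral against $dK^{(q+1)}$ and using compact support. Everything else is routine bookkeeping, and the convergence of the $\sum |k|^{-(q+2)}$ series is exactly why the definition of $\mathbb{K}^{p,q}$ is calibrated with the exponent $q+2$.
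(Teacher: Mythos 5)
This lemma is quoted from \cite{doghonay1} and the paper supplies no proof of its own, so there is no internal argument to compare against; judging from the remark immediately after the lemma (``all derivatives involved in the proof are treated in the classical sense''), the cited proof is the integration-by-parts argument in which $f-\bar f$ is written as the derivative of an iterated, zero-mean, periodic antiderivative and the derivatives are moved onto $K_\mu$ one at a time, each step gaining a factor $\varepsilon/\mu$, the last one exploiting $K^{(q+1)}\in BV(\Real)$. Your Fourier-series argument is a correct and genuinely different route: the decay $|\hat K(\xi)|\le C(1+|\xi|)^{-(q+2)}$ encodes all $q+2$ integrations by parts in one stroke, the vanishing moments reappear as $\int y^r K(y)\,dy=0$ for $1\le r\le p$ after absorbing $x^r$ into the modified kernel $\tilde K$, and the three cases of the bound fall out exactly as stated. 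What your route buys is that the extension to $f\in L^\infty$ mentioned after the lemma is automatic, since only $|\hat f_k|\le |f|_\infty$ is used and no antiderivative of $f$ ever appears; what it costs is the interchange of sum and integral, which you justify too casually --- the Fourier series of a merely continuous $f$ need not converge pointwise, let alone uniformly, so ``the convergent Fourier expansion'' is not by itself a license to integrate term by term. The step is easily repaired: the partial sums converge to $f(\cdot/\varepsilon)$ in $L^2$ on the compact support of $K_\mu$, which covers finitely many periods, and $K_\mu\in L^2(\Real)$, so Cauchy--Schwarz gives the interchange (alternatively, use Fej\'er means and pass to the limit). With that one sentence added, the proof is complete and the constants depend only on $K$, $p$, $q$ and $r$ as required.
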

In \cite{doghonay1}, this lemma is proved for a continuous $f$
since all derivatives involved in the proof are treated in the classical
sense. However, when the derivatives are seen in a weak sense, the
lemma also applies to $f \in L^\infty$, as explained in \cite{doghonay2}.

In \cite{doghonay2}, an averaging lemma for functions $f(x, y)$ that only are
1-periodic in the second variable is derived.
In the following, we give a variation of that lemma which is adapted
for Bochner-Sobolev spaces and higher dimensions.
 \begin{lemma}\label{lemma:avg_space}
   Let $Y = [0, 1]^d$ and $\Omega_\mu = [-\mu, \mu]^d$ for
   $d \in \mathbb{N}$. Suppose $f(x, y)$ is 1-periodic in $y$ and
   $\partial_x^\beta f \in L^\infty(\Omega_\mu; L^\infty(Y))$ for
   $0 \le |\beta| \le p+1$ and assume that
   $K \in \mathbb{K}^{p, q}$. Then, with
   $\bar f(x) := \int_{Y} f(x, y) dy$,
    \begin{align*}
      \left| \int_{\Real^d} K_\mu(x) f(x, x/\varepsilon) dx - \bar f(0) \right| \le C  \sup_{y \in Y} \|f(\cdot, y)\|_{W^{p+1, \infty}(\Omega_\mu)}
      \left(\left(\frac{\varepsilon}{\mu}\right)^{q+2} + \mu^{p+1}\right)\,,
    \end{align*}
    where the constant $C$ does not depend on $\mu$ or $f$ but may depend on $K, p$ and $q$.
  \end{lemma}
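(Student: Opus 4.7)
The plan is to Taylor expand $f(x,y)$ in the slow variable only, around $x=0$, and then apply the one-dimensional Lemma \ref{lemma:doghonay1} (together with the tensor-product structure of $K_\mu$) to each resulting term.

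First, for $x \in \Omega_\mu$, I would write
\begin{align*}
f(x, x/\varepsilon) = \sum_{|\beta| \le p} \frac{x^\beta}{\beta!} \partial_x^\beta f(0, x/\varepsilon) + R_p(x, x/\varepsilon),
\end{align*}
where the integral remainder from Taylor's theorem satisfies
\begin{align*}
|R_p(x, x/\varepsilon)| \le C\, |x|^{p+1} \max_{|\beta|=p+1} \sup_{y \in Y} \|\partial_x^\beta f(\cdot, y)\|_{L^\infty(\Omega_\mu)}.
\end{align*}
Crucially, the fast dependence $y = x/\varepsilon$ is frozen during the expansion, so each term $\partial_x^\beta f(0, x/\varepsilon)$ is a purely $\varepsilon$-periodic function of $x$ (in each coordinate) with bounded $L^\infty$ norm.

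Next, I would multiply by $K_\mu(x)$ and integrate, splitting into the $\beta=0$ term, the intermediate terms with $1 \le |\beta| \le p$, and the remainder. For the leading term $\int K_\mu(x) f(0, x/\varepsilon) dx$, I would apply Lemma \ref{lemma:doghonay1} iteratively, exploiting the tensor product $K_\mu(x) = K_\mu(x_1) \cdots K_\mu(x_d)$: integrating one variable at a time and using the first estimate of Lemma \ref{lemma:doghonay1} with $r=0$ yields $\bar f(0)$ up to an error of size $\sup_y |f(0,y)| (\varepsilon/\mu)^{q+2}$. For the intermediate terms $\int K_\mu(x) x^\beta \partial_x^\beta f(0, x/\varepsilon) dx$, the same iterated argument combined with the second estimate of Lemma \ref{lemma:doghonay1} (the $1 \le r \le p$ case, which gives the extra $\mu^r$ factor) produces a bound $C \sup_y \|\partial_x^\beta f(0, \cdot)\|_{L^\infty} (\varepsilon/\mu)^{q+2} \mu^{|\beta|}$, which, since $\mu \le 1$ in the regime of interest, is absorbed into $(\varepsilon/\mu)^{q+2}$. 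Finally, the remainder contribution is controlled by
\begin{align*}
\int_{\Real^d} K_\mu(x) |R_p(x, x/\varepsilon)| dx \le C \mu^{p+1} \|K\|_{L^1(\Real^d)} \sup_{y \in Y} \|f(\cdot, y)\|_{W^{p+1,\infty}(\Omega_\mu)},
\end{align*}
using $|x| \le \sqrt{d}\,\mu$ on $\mathrm{supp}\,K_\mu$.

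The only genuine technical point is the extension of Lemma \ref{lemma:doghonay1} from one dimension to the product setting. Because $K_\mu$ factorizes and $f(0, \cdot)$ is $1$-periodic in each component of $y$ separately, this is handled by iterating the one-dimensional estimate $d$ times, picking up a factor $(\varepsilon/\mu)^{q+2}$ in one direction while the sup-norm in the remaining $d-1$ directions absorbs the rest. Once this product-rule reduction is in place the remainder of the argument is just bookkeeping, combining the three bounds into the stated $(\varepsilon/\mu)^{q+2} + \mu^{p+1}$ estimate with the $W^{p+1,\infty}$ prefactor.
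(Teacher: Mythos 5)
Your proposal is correct and follows essentially the same route as the paper: Taylor expansion in the slow variable around $x=0$ with the fast argument frozen, an iterated coordinate-by-coordinate application of \Cref{lemma:doghonay1} exploiting the tensor-product structure of $K_\mu$ for the terms with $|\beta|\le p$, and a direct $\mu^{p+1}$ bound on the integral-form remainder using that $\operatorname{supp}K_\mu\subset[-\mu,\mu]^d$. The paper only adds the explicit bookkeeping of the iterated one-dimensional averages (the auxiliary functions $g_n$, $h_n$) and the final density argument passing from continuous to $L^\infty$ derivatives, both of which you gloss over but which are routine.
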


  \begin{proof}
    We first assume that
    $\partial_x^\beta f \in C(\Omega; L^\infty(Y))$ for
    $0 \le |\beta| \le p+1$. Then we obtain via Taylor
    expansion of $f(x, x/\varepsilon)$ that
  \begin{align*}
    \int_{\Real^d} K_\mu(x) &f\left(x, {x}/{\varepsilon}\right) dx =
    \int_{\Omega_\mu} K_\mu(x) f\left(0, {x}/{\varepsilon}\right) dx \\&+ \sum_{1 \le |\beta| \le p} \frac{1}{\beta!}\int_{\Omega_\mu} K_\mu(x)\partial_x^\beta f\left(0, {x}/{\varepsilon}\right)  x^\beta dx
    +  \sum_{|\beta| = p+1} \int_{\Omega_\mu} K_\mu(x) R_\beta(x) x^\beta dx
    \\&=: I + II + III
\,,
  \end{align*}
  where $R_\beta(x)$ is the remainder in integral form,
  \[R_\beta(x) = \frac{|\beta|}{\beta!} \int_0^1 (1-z)^{p} \partial_x^{\beta} f\left(z x, {x}/{\varepsilon}\right) dz\,.\]

  The terms $I$ and $II$ can be bounded using
  \Cref{lemma:doghonay1}. We consider one coordinate direction at a
  time. For this purpose, assume that we have a multi-index
  $\beta = [\beta_1, ..., \beta_d]$, coordinates $x = (x_1, ... x_d)$  and let
  \begin{align*}
    g_0(y_1, ..., y_d) &:= \partial_x^\beta f(0, y) = \partial_x^\beta f(0, ..., 0 , y_1, ..., y_d), \\
    g_{n} (y_{n+1}, ..., y_d) &:= \int_{-\mu}^\mu K_{\mu}(x_{n})  g_{n-1}(x_{n}/\varepsilon, y_{n+1},..., y_d) x_{n}^{\beta_n} dx_{n},  \qquad 1 \le n \le d-1\\
    g_d &:= \int_{-\mu}^\mu K_{\mu}(x_{d})  g_{d-1}(x_{d}/\varepsilon) x_{d}^{\beta_d} dx_{d}
          = \int_{\Omega_\mu} \K_\mu(x) \partial_x^\beta f(0, x/\varepsilon) x^\beta dx
          ,
  \end{align*}
   and
  \begin{align*}
    h_n(x_n) := \int_{0}^1 \cdots \int_0^1  g_{n-1}(x_n, y_{n+1}, ..., y_d) dy_{n+1} \cdots dy_d.
  \end{align*}
  Note first that due to the
  fact that $\mu < 1$, we obtain by iterative application of
  \Cref{lemma:doghonay1} that
  \begin{align}\label{eq:sup_gn}
    \sup_{y \in Y} |g_n(y_{n+1}, ..., y_d)|
    &= \sup_{y \in Y}\left| \int_{-\mu}^\mu K_\mu(x_n) g_{n-1} (x_n/\varepsilon, y_{n+1}, ..., y_d) x_n^{\beta_n} dx_n\right| \nonumber \\
    &\le C \sup_{y \in Y} | g_{n-1}(y_n, ..., y_d)| \left(\left(\varepsilon/\mu\right)^{q+2} + \delta_{\beta_n = 0}\right) \\
    &\le C \sup_{y \in Y} |\partial_x^{\beta} f(0, y)| \prod_{j=1}^n \left(\left(\varepsilon/\mu\right)^{q+2} + \delta_{\beta_j = 0}\right) \nonumber,
  \end{align}
  where $\delta_{\beta_j = 0}$ indicates that there is a term of
  order one when $\beta_j = 0$, an upper bound for the average in coordinate direction
  $j$.

  In case of $I$, we have $|\beta| = 0$. An application of
  \Cref{lemma:doghonay1} then yields that for $1 \le j \le d$,
  \begin{align*}
    \left| \bar g_j - \bar g_{j-1}\right|
    &= \left| \int_{-\mu}^\mu K_\mu(x_j) h_{j}(x_j/\varepsilon) dx_j
      - \int_0^1 h_j(x_j) dx_j \right| \le C \sup_{y_j \in [0, 1]} |h_j(y_j)| \left(\frac{\varepsilon}{\mu}\right)^{q+2},
  \end{align*}
  and as a consequence of \cref{eq:sup_gn}, it holds that
  \begin{align*}
    \sup_{y_j \in [0, 1]} |h_j(x_j)| \le \sup_{y_j, ..., y_d \in [0, 1]} |g_{j-1}(y_n, ..., y_d)| \le
    C \sup_{y \in Y} |f(0, y)|.
  \end{align*}
  Using the fact that $I = g_d = \bar g_d$ and $\bar f(0) = \bar g_0$ we hence obtain
  \begin{align}\label{eq:space_I}
    |I - \bar f(0)| \le |I - \bar g_{d-1}| + \sum_{j=1}^{d-1} |\bar g_{j} - \bar g_{j-1}|
    \le C \sup_{y \in Y} |f(0, y)| \left(\frac{\varepsilon}{\mu}\right)^{q+2}.
  \end{align}

  To estimate the integrals in $II$, consider $1 \le |\beta| \le
  p$. It then follows by 
  \cref{eq:sup_gn} that
  \begin{align*}
    \left| \int_{\Omega_\mu} K_\mu(x) \partial_x^\beta f(0, x/\varepsilon) x^\beta dx \right|
    &= |g_d|
    \le C \sup_{y \in Y} |\partial_x^\beta f(0, y)| \left(\frac{\varepsilon}{\mu}\right)^{q+2},
  \end{align*}
  where the last step follows since we know that $|\beta| > 0$,
  there is at least one direction $j$ such that $\beta_j > 0$. Consequently, we obtain
  \begin{align}\label{eq:space_II}
    |II| \le  C \max_{1 \le |\beta| \le p}  \sup_{y \in Y} |\partial_x^\beta f(0, y)| \left(\frac{\varepsilon}{\mu}\right)^{q+2} .
  \end{align}

  To bound $III$ we use 
  the fact that
  \begin{align*}
    \sup_{x \in \Omega_\mu} |R_\beta(x)|
    &\le  C \frac{|\beta|}{\beta!} \sup_{x \in [-1,1]^d} \left| \int_0^1 (1-z)^{p} \partial^\beta_x f\left(z \mu x, \frac{\mu x}{\varepsilon}\right) dz \right|
 \\ &\le C  \sup_{x \in \Omega_\mu} \sup_{y \in Y} |\partial^\beta f(x, y)|.
\end{align*}
Thus we can bound the integrals in the third term above, $III$, as follows,
  \begin{align*}
    \left| \int_{\Omega_\mu} K_\mu(x) R_\beta(x) x^\beta dx \right|
    \le \sup_{x \in \Omega_\mu}  |R_\beta(x) x^\beta| \|K\|_{L^1}
    \le C  \sup_{x \in \Omega_\mu} \sup_{y \in Y} |\partial^\beta f(x, y)| \mu^{|\beta|}.
  \end{align*}
  Therefore,
  \begin{align}\label{eq:space_III}
     |III| \le C \max_{|\beta| = p+1} \sup_{x \in \Omega_\mu} \sup_{y \in Y} |\partial^\beta f(x, y)| \mu^{p+1}.
  \end{align}
  Combining the estimates \cref{eq:space_I}, \cref{eq:space_II} and
  \cref{eq:space_III} then yields the estimate in the lemma for
  functions with $\partial_x^\beta f \in C(\Omega, L^\infty(Y))$.

  If we instead have that
  $\partial_x^\beta f \in L^\infty (\Omega_\mu; L^\infty(Y))$ for
  $0 \le |\beta| \le p+1$, we can approximate them by smooth
  functions such that the above still holds.
\end{proof}

\subsection{Averaging in space and slow time}

For a vector-valued function $\w(x, t)$, let
\begin{align}\label{eq:bar_k_def}
   \bar \K_{\mu, \eta} \w := \int_{\Omega_\mu} \int_{0}^\eta
  K_\mu(x) K^0_\eta(t) \w(x, t) dt dx\,,
\end{align}
where $K \in \mathbb{K}^{p_x, q_x}$ and
$K^0 \in \mathbb{K}^{p_t, q_t}_0$ are given kernels that are scaled
by parameters $\mu$ and $\eta$, respectively.

\begin{lemma}\label{lemma:k_boundedness}
  With $\bar \K_{\mu, \eta}$ given in \cref{eq:bar_k_def}, it holds
  for $\u \in L^\infty(0, \eta; L^2(\Omega))$ that
  \begin{align}\label{eq:k_l2_bound}
    \left|\bar \K_{\mu, \eta} \u\right| \le \frac{C}{\mu^{d/2}} \sup_{0 \le t \le \eta} \|\u(\cdot, t)\|_{L^2}.
  \end{align}
  Moreover, if $\u \in L^\infty(0, \eta; L^\infty(\Omega))$, then
  \begin{align}\label{eq:k_inf_bound}
    \left|\bar \K_{\mu, \eta} \u\right| \le C \sup_{0 \le t \le \eta} \|\u(\cdot, t)\|_{L^\infty}.
  \end{align}
\end{lemma}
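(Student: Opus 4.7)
The plan is to prove both bounds by direct application of Hölder/Cauchy--Schwarz inequalities, exploiting the separable tensor-product structure of $K_\mu$ and the fact that $K^0_\eta$ has compact support with bounded $L^1$-norm.

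For the $L^\infty$ estimate \cref{eq:k_inf_bound}, I would simply pull out $\sup_{0\le t\le \eta}\|\u(\cdot,t)\|_{L^\infty}$ from the double integral in \cref{eq:bar_k_def}, yielding
\[ |\bar\K_{\mu,\eta}\u| \le \sup_{0\le t\le\eta}\|\u(\cdot,t)\|_{L^\infty}\left(\int_0^\eta |K^0_\eta(t)|\,dt\right)\left(\int_{\Omega_\mu}|K_\mu(x)|\,dx\right). \]
By the scaling $K_\mu(x)=\mu^{-1}K(x/\mu)$ and the tensor-product convention in $d$ dimensions, $\int_{\Omega_\mu}|K_\mu|\,dx = \|K\|_{L^1(-1,1)}^d$, and similarly $\int_0^\eta|K^0_\eta|\,dt = \|K^0\|_{L^1(0,1)}$. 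Both are finite constants depending only on the kernels, giving \cref{eq:k_inf_bound}.

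For the $L^2$ estimate \cref{eq:k_l2_bound}, I would first bound the time integration: for each fixed $x$,
\[ \left|\int_0^\eta K^0_\eta(t)\u(x,t)\,dt\right| \le \|K^0\|_{L^1(0,1)}\sup_{0\le t\le\eta}|\u(x,t)|. \]
After pulling this into the spatial integral, apply the Cauchy--Schwarz inequality in $x$:
\[ |\bar\K_{\mu,\eta}\u| \le \|K^0\|_{L^1(0,1)}\,\|K_\mu\|_{L^2(\Omega_\mu)}\left\|\sup_{0\le t\le\eta}|\u(\cdot,t)|\right\|_{L^2(\Omega_\mu)}. \]
The last factor is bounded by $\sup_{0\le t\le\eta}\|\u(\cdot,t)\|_{L^2(\Omega)}$. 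A direct scaling computation on the tensor-product kernel gives $\|K_\mu\|_{L^2(\Omega_\mu)}^2 = \mu^{-d}\|K\|_{L^2(-1,1)}^{2d}$, hence $\|K_\mu\|_{L^2(\Omega_\mu)} = C\mu^{-d/2}$, which produces the claimed factor $\mu^{-d/2}$.

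There is no real obstacle here; the argument is bookkeeping once one identifies the correct Hölder pairing. The only minor subtlety is legitimately interchanging the sup-in-time with the $L^2$-in-space norm, which is justified because $\u \in L^\infty(0,\eta;L^2(\Omega))$ means the pointwise-in-$x$ time-sup is measurable and dominated by an $L^2(\Omega)$ function (namely the essential supremum), so Fubini/Tonelli applies without issue.
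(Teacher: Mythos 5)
Your proof of the $L^\infty$ bound \cref{eq:k_inf_bound} is correct and coincides with the paper's argument. The $L^2$ bound \cref{eq:k_l2_bound}, however, has a genuine gap in its last step. Having integrated in time first, you are left with the factor $\bigl\|\sup_{0\le t\le\eta}|\u(\cdot,t)|\bigr\|_{L^2(\Omega_\mu)}$ and you claim it is bounded by $\sup_{0\le t\le\eta}\|\u(\cdot,t)\|_{L^2(\Omega)}$. That inequality is false in general; in fact the reverse one holds, since $|\u(x,t)|\le\sup_{s}|\u(x,s)|$ pointwise implies $\|\u(\cdot,t)\|_{L^2}\le\bigl\|\sup_{s}|\u(\cdot,s)|\bigr\|_{L^2}$ for every $t$. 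The space $L^\infty(0,\eta;L^2(\Omega))$ does not embed into $L^2(\Omega;L^\infty(0,\eta))$: for $\u(x,t)=\mathbf{1}_{[t,t+\delta]}(x)$ one has $\sup_t\|\u(\cdot,t)\|_{L^2}=\sqrt{\delta}$ while $\bigl\|\sup_t|\u(\cdot,t)|\bigr\|_{L^2}$ stays of order one as $\delta\to 0$. Your closing remark that the time-sup is ``dominated by an $L^2(\Omega)$ function, namely the essential supremum'' assumes exactly what needs proving, namely that this essential supremum is square integrable with controlled norm.

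The repair is simply to perform the two estimates in the opposite order, which is what the paper does: apply Cauchy--Schwarz in $x$ at each fixed $t$,
\begin{align*}
\left|\int_{\Omega_\mu}K_\mu(x)\,\u(x,t)\,dx\right|\le \|K_\mu\|_{L^2(\Omega_\mu)}\,\|\u(\cdot,t)\|_{L^2}\le \frac{C}{\mu^{d/2}}\,\|\u(\cdot,t)\|_{L^2}\,,
\end{align*}
and only then integrate against $|K^0_\eta(t)|$, which gives $|\bar\K_{\mu,\eta}\u|\le C\mu^{-d/2}\|K^0\|_{L^1}\sup_{0\le t\le\eta}\|\u(\cdot,t)\|_{L^2}$. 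Equivalently, you may keep your order but replace the pointwise-in-$x$ time bound by Minkowski's integral inequality, $\bigl\|\int_0^\eta K^0_\eta(t)\,\u(\cdot,t)\,dt\bigr\|_{L^2}\le\int_0^\eta|K^0_\eta(t)|\,\|\u(\cdot,t)\|_{L^2}\,dt$. Your scaling computations for $\|K_\mu\|_{L^1}$ and $\|K_\mu\|_{L^2(\Omega_\mu)}$ are correct and match the paper's.
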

\begin{proof}
 By the Cauchy-Schwarz inequality, it follows that
 that
 \begin{align*}
   \left| \int_{\Omega_\mu} K_\mu(x) \u(x,t) dx \right|
   &\le \left(\int_{\Omega_\mu} \frac{1}{\mu^{2d}} \left|K\left(\frac{x}{\mu}\right)\right|^2 dx
     \int_{\Omega_\mu} \left|\u(x, t)\right|^2dx \right)^{1/2}  \\
   &= \left( \frac{1}{\mu^d}  \int_{[-1, 1]^d} \left|K(x)\right|^2 dx  \int_{[-1, 1]^d} |\u|^2 dx \right)^{1/2} \\
   &\le  \frac{C}{\mu^{d/2}} \|\u(\cdot, t)\|_{L^2}\,,
 \end{align*}
 hence it holds that
 \begin{align*}
   |\bar \K_{\mu, \eta} \u|
   \le \frac{1}{\sqrt{\mu^d}} C  \left\|\int_0^\eta K^0_\eta \u(\cdot, t) dt\right\|_{L^2}
   \le \frac{1}{\sqrt{\mu^d}} C  \|K^0\|_{L^1} \sup_{0 \le t \le \eta}  \left\|\u(\cdot, t)\right\|_{L^2}
   \,,
 \end{align*}
 which shows the first result in the lemma. Furthermore, it holds
 that
 \begin{align*}
   |\bar \K_{\mu, \eta} \u|
   &\le \sup_{0 \le t \le \eta} \|\u(\cdot, t)\|_{L^\infty(\Omega_\mu)}  \int_{\Omega_\mu} \int_0^\eta |K_\mu| |K^0_\eta| dt dx  \\
     &\le \|K\|_{L^1} \|K^0\|_{L^1} \sup_{0 \le t \le \eta}  \left\|\u(\cdot, t)\right\|_{L^\infty(\Omega_\mu)}.
 \end{align*}
 This completes the proof.
\end{proof}

Next, we prove a general lemma that holds for the
averaging of sufficiently regular functions that change only slowly
in time but contain both slow and fast variations in space. These
fast spatial oscillations have to be representable by a periodic function
that multiplies a function only depending on the slow variables.

\begin{lemma}\label{lemma:avg1}
  Consider averaging kernels $K(x)$ for space and $K^0(t)$ for time
  such that $K \in \mathbb{K}^{p_x, q_x}$ and
  $K^0 \in \mathbb{K}^{q_t, p_t}_0$ and assume that
  $\varepsilon < \mu < 1$ and $\varepsilon^2 < \eta < 1$.  Let $g$
  be a $1$-periodic function such that $g \in L^\infty(Y) $ and
  $\bar g = \int_{[0, 1]^d} g(x) dx$.  Suppose that for
  $0 \le k \le p_t+1$ and $0 \le |\beta| \le p_x + 1$,
  $\partial_t^k \partial_x^\beta f(x, t) \in C(0, \eta;
  L^\infty(\Omega))$.  Then
  \begin{align*}
        & \left| \bar \K_{\mu, \eta} \left( f(x, t) g(x/\varepsilon)\right) - f(0, 0) \bar g \right|\\
&\le  C  \max_{k \le p_t +1} \sup_{0 \le t \le \eta} \|\partial_t^k f(\cdot, t)\|_{W^{p_x+1, \infty}(\Omega_\mu)} \|g\|_{L^\infty}  \left(\left(\frac{\varepsilon}{\mu}\right)^{q_x +2} + \mu^{p_x + 1}  + \eta^{p_t + 1} \right)\,.
  \end{align*}
\end{lemma}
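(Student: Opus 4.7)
The plan is to decompose the error into a spatial averaging part and a temporal averaging part by introducing the intermediate quantity $f(0,t)\bar g$. Specifically, I would write
\begin{align*}
 \bar\K_{\mu,\eta}(f(x,t)g(x/\varepsilon)) - f(0,0)\bar g
 &= \int_0^\eta K^0_\eta(t)\bigl(F(t) - f(0,t)\bar g\bigr)\,dt \\
 &\quad + \bar g\Bigl(\int_0^\eta K^0_\eta(t) f(0,t)\,dt - f(0,0)\Bigr),
\end{align*}
where $F(t) := \int_{\Omega_\mu} K_\mu(x) f(x,t) g(x/\varepsilon)\,dx$. This splits the problem into the two error sources that the right-hand side of the lemma encodes.

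For the first term, I would apply \Cref{lemma:avg_space} with the integrand $(x,y)\mapsto f(x,t)g(y)$ (which is $1$-periodic in $y$) for each fixed $t$. Since $\|f(\cdot,t)g(y)\|_{W^{p_x+1,\infty}(\Omega_\mu)} \le \|g\|_{L^\infty}\|f(\cdot,t)\|_{W^{p_x+1,\infty}(\Omega_\mu)}$, this yields
\[
|F(t)-f(0,t)\bar g| \le C\|g\|_{L^\infty}\sup_{0\le t\le\eta}\|f(\cdot,t)\|_{W^{p_x+1,\infty}(\Omega_\mu)}\Bigl(\bigl(\tfrac{\varepsilon}{\mu}\bigr)^{q_x+2}+\mu^{p_x+1}\Bigr),
\]
and multiplying by $|K^0_\eta(t)|$ and integrating contributes the factor $\|K^0\|_{L^1}$, giving the $(\varepsilon/\mu)^{q_x+2}+\mu^{p_x+1}$ piece of the bound.

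For the second term, I would Taylor expand $f(0,t)$ in $t$ around $t=0$ to order $p_t$, writing
\[
f(0,t) = f(0,0) + \sum_{k=1}^{p_t}\frac{t^k}{k!}\partial_t^k f(0,0) + R(t),\qquad |R(t)|\le C t^{p_t+1}\sup_{0\le s\le\eta}|\partial_t^{p_t+1}f(0,s)|.
\]
Since $K^0\in\mathbb{K}^{p_t,q_t}_0$ has support in $[0,1]$, the scaled kernel satisfies $\int_0^\eta K^0_\eta(t)\,dt=1$ and $\int_0^\eta K^0_\eta(t)\,t^k\,dt=0$ for $1\le k\le p_t$ by a change of variables. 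Only the remainder survives, and because $t\le \eta$ on the support of $K^0_\eta$, its contribution is bounded by $C|\bar g|\,\eta^{p_t+1}\sup_{0\le t\le\eta}|\partial_t^{p_t+1}f(0,t)|$, producing the $\eta^{p_t+1}$ piece. Since $|\bar g|\le\|g\|_{L^\infty}$ and both $\sup_t|\partial_t^{p_t+1}f(0,t)|$ and $\sup_t\|f(\cdot,t)\|_{W^{p_x+1,\infty}}$ are dominated by $\max_{k\le p_t+1}\sup_t\|\partial_t^k f(\cdot,t)\|_{W^{p_x+1,\infty}(\Omega_\mu)}$, combining the two estimates yields the lemma.

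I do not expect a real obstacle here; the argument is essentially a tensor-product version of the one-dimensional moment-cancellation trick, plus one application of the already-proven \Cref{lemma:avg_space}. The only thing to be careful about is that the spatial error bound from \Cref{lemma:avg_space} must be uniform in $t\in[0,\eta]$ (so that the $\int K^0_\eta$ integration passes through cleanly), which is why the final bound is phrased as a supremum over $t$ of the $W^{p_x+1,\infty}$ norm of $f(\cdot,t)$ and its time derivatives.
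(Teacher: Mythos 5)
Your proof is correct and rests on the same two ingredients as the paper's—\Cref{lemma:avg_space} for the spatial error and the time-kernel moment conditions via Taylor expansion for the temporal error; the only difference is the order of the triangle-inequality split: you pass through the intermediate term $f(0,t)\bar g$ (spatial lemma applied uniformly in $t$, then time cancellation at $x=0$), whereas the paper Taylor-expands in time first and then applies the spatial lemma to $f(x,0)g(x/\varepsilon)$, bounding the time remainder directly. Both orderings yield the stated bound, so no changes are needed.
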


\begin{proof}
Consider first averaging in time only. As an immediate consequence of
\Cref{def:kernel} and the fact that $K^0(t)$ is zero for $t < 0$, it holds that
  \begin{align*}
    \int_{0}^\eta K^0_\eta(t) f(x, 0) dt
    &= f(x, 0) \int_{-1}^1 K^0(t) dt = f(x, 0) \,,
      \\
      \int_{0}^\eta K^0_\eta(t) \partial_t^j f(x, 0) t^j dt
      &= \eta^j \partial_t^j f(x, 0) \int_{-1}^1 K^0(t)  t^j dt
      = 0, \qquad 1 \le j \le p_t.
  \end{align*}
 Hence, when Taylor-expanding $f(x, t)$ in time around zero, we obtain

\begin{align*}
   \int_{0}^\eta K^0_\eta(t) f(x, t) dt = f(x, 0) + \int_{0}^\eta K_\eta^0(t) {R}_{p_t+1}(x, t) dt\,,
\end{align*}
where the remainder term is
\[{R}_{p_t+1}(x, t) := \frac{1}{{p_t}!} \int_{0}^t (t - z)^{{p_t}} \partial_t^{{p_t}+1} f (x, z) dz\,.\]
This representation of the time
averaging integral can then be used to obtain a bound on the
considered averaging error that consists of two parts,
\begin{align*}
  &\left| \bar \K_{\mu, \eta} \left( f(x, t) g(x/\varepsilon) \right)
  -  f(0, 0) \bar g \right| \\
&\qquad\le  \left| \int_{\Omega_\mu} K_\mu(x) f (x, 0) g(x/\varepsilon) dx - f(0, 0) \bar g \right|
  +  \left| \bar \K_{\mu, \eta} \left( {R}_{p_t}(x, t)  g(x/\varepsilon) \right)\right|
 =: I + II\,.
\end{align*}
The first part here, $I$, corresponds to averaging in space of
a time-independent function with slow and fast, periodic variations in
space. Application of  \Cref{lemma:avg_space} then yields
\begin{align*}
  |I| &\le C \sup_{y \in Y} \|f(\cdot, 0) g(y)\|_{W^{p_x+1, \infty}(\Omega_\mu)}
        \left(\left(\frac{\varepsilon}{\mu}\right)^{q_x +2} + \mu^{p_x + 1} \right) \,.
\end{align*}

To bound the second part, $II$, note that the 
remainder integral from time integration can be rescaled to $[0, 1]$
and then be bounded in terms of $C_f$ and $K^0(t)$,
\begin{align*}
 \int_{0}^\eta K_\eta^0 {R}(x, t) dt 
  &\le \frac{1}{{p_t}!} \int_{0}^1 |K^0| \int_0^{\eta t} |(\eta t - z)|^{{p_t}} |\partial_t^{{p_t}+1} f(x, z)| dz dt \\
 &\le \frac{\eta^{{p_t}+1}}{{p_t}!} \sup_{0 \le t \le \eta} |\partial_t^{{p_t}+1} f (x, t)| \|K^0\|_{L^1}.
\end{align*}
Hence, we can bound the integral in $II$ by
 \begin{align*}
|II| \le
   C \sup_{0 \le t \le \eta} \sup_{x \in \Omega_\mu}   |\partial^{p_t+1} f(x, t)| \|g\|_{L^\infty}
   \eta^{p_t + 1},
 \end{align*}
 where the constant $C$ depends on $K, K^0$ and $p_t$ but is independent of $\varepsilon$, $\eta$ and $\mu$.
 Together with the estimate for $|I|$, this shows result in the lemma.
 \end{proof}

\subsection{Averaging involving temporal oscillations}

For expressions involving the correction term
$\v\left(x, x/\varepsilon, t, t/\varepsilon^2\right)$, a special averaging lemma that
exploits the structure of $\v$ as given in \Cref{lemma:v} is
necessary in order to get error estimates in \Cref{sec:avg_ll}.
We therefore proceed to  derive a lemma for time averaging with a
kernel for functions of the form
\begin{equation}
  \label{eq:w}
  \int_{-\mu}^\mu
\partial_x^{\beta_1} f(x/\varepsilon) \partial_x^{\beta_2} K_\mu(x) u(x, t) \Psi\left(\frac{x}{\varepsilon},
  \frac{t}{\varepsilon^2}\right) dx\,,
\end{equation}
where $u(x, t)$ is a function that only varies slowly, $f(y)$ is a
1-periodic function and $\Psi(x, t)$ is defined as in
\Cref{lemma:v}. Moreover, $\beta_1$ and $\beta_2$ are given
multi-indices.  We obtain the following result.
\begin{lemma}\label{lemma:avg_v_base}
  Consider averaging kernels $K \in \mathbb{K}^{q_x, p_x}$ and
  $K^0 \in \mathbb{K}^{q_t, p_t}_0$ and assume that
  $\varepsilon < \mu < 1$ and $\varepsilon^2 < \eta < 1$.  Let
  $u(x, t)$ be a complex-valued function such that
  $\partial_t^r u \in C(0, \eta; L^\infty(\Omega))$ for
  $0 \le r \le q_t +1$.  Moreover, consider multi-indices $\beta_1$
  and $\beta_2$ such that $|\beta_2| \le q_x$.  Suppose $f$ is a
  1-periodic function such that
  $\partial_x^{\beta_1}f \in L^\infty(Y)$, and that $\Psi$ is given
  by \cref{eq:psi}. Then
\begin{align*}
  &\left| \int_{0}^\eta   K^0_\eta(t) \int_{\Omega_\mu}  \left(\partial_x^{\beta_1} f(x/\varepsilon)\right) \left(\partial_x^{\beta_2} K_\mu(x)\right)  u(x, t) \Psi\left(\frac{x}{\varepsilon}, \frac{t}{\varepsilon^2}\right) dx dt \right|
  \\&\qquad\qquad  \le C   \frac{1}{\mu^{|\beta_2|} \varepsilon^{|\beta_1|}}
      \max_{\rho \le q_t+1} \sup_{t \in [0, \eta]} \|\partial_t^\rho u(\cdot, t)\|_{L^\infty}
      \|\partial_y^{\beta_1} f\|_{L^\infty}
      \left(\frac{\varepsilon^2}{\eta}\right)^{q_t+1}\,,
\end{align*}
where the constant C  depends on $ \|K^0\|_{W^{1, q_t+1}}$ 
and $\|\boldsymbol \chi\|_{L^2}$ but
is
independent of $\varepsilon, \mu, \eta$ and $t$.
\end{lemma}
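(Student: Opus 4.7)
The strategy is to remove the fast temporal oscillations in $\Psi(x/\varepsilon, t/\varepsilon^2)$ by repeatedly integrating by parts in $t$, exploiting the explicit eigenfunction expansion of $\Psi$ established in \Cref{lemma:v}. To this end, I would introduce the iterated antiderivatives in the fast time variable,
\begin{equation*}
  \Psi_{-k}(y, \tau) := \sum_{j=1}^\infty \frac{\boldsymbol \chi_j}{((-\alpha+i)\omega_j)^k}\, e^{(-\alpha+i)\omega_j \tau}\, \phi_j(y), \qquad k \ge 0,
\end{equation*}
which by construction satisfy $\partial_\tau^k \Psi_{-k} = \Psi$. Since $\omega_j \ge \omega_1 > 0$ for every $j \ge 1$ and $\{\phi_j\}$ is orthonormal in $L^2(Y)$, Parseval combined with $|e^{-\alpha \omega_j \tau}| \le 1$ for $\tau \ge 0$ gives the $\tau$-uniform bound $\|\Psi_{-k}(\cdot, \tau)\|_{L^2(Y)} \le \|\boldsymbol \chi\|_{L^2(Y)}/\omega_1^k$.

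Writing the integral of interest as $\int_{\Omega_\mu} G(x) T(x)\, dx$ with $G(x) := \partial_x^{\beta_1} f(x/\varepsilon)\, \partial_x^{\beta_2} K_\mu(x)$ and $T(x) := \int_0^\eta K^0_\eta(t)\, u(x,t)\, \Psi(x/\varepsilon, t/\varepsilon^2)\, dt$, the identity
\begin{equation*}
  \partial_t \bigl[\Psi_{-(k+1)}(x/\varepsilon, t/\varepsilon^2)\bigr] = \varepsilon^{-2}\, \Psi_{-k}(x/\varepsilon, t/\varepsilon^2)
\end{equation*}
allows me to integrate by parts in $t$ exactly $q_t+1$ times. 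Each round produces one factor $\varepsilon^2$, shifts one time derivative onto the smooth factor $K^0_\eta(t)\, u(x,t)$, and replaces $\Psi_{-k}$ by $\Psi_{-(k+1)}$. The boundary contributions vanish at every step because $K^0$ and its derivatives vanish at the endpoints of its support by the construction of the $K_0$-kernels in \cref{eq:k_0}. The result is
\begin{equation*}
  T(x) = (-\varepsilon^2)^{q_t+1} \int_0^\eta \partial_t^{q_t+1}\bigl[K^0_\eta(t)\, u(x,t)\bigr]\, \Psi_{-(q_t+1)}(x/\varepsilon, t/\varepsilon^2)\, dt.
\end{equation*}

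Expanding $\partial_t^{q_t+1}[K^0_\eta u]$ via Leibniz and using the scaling $\int_0^\eta |(K^0_\eta)^{(l)}(t)|\, dt = \eta^{-l} \|(K^0)^{(l)}\|_{L^1}$, the dominant contribution carries a factor $\eta^{-(q_t+1)}$, which combines with $\varepsilon^{2(q_t+1)}$ into the advertised $(\varepsilon^2/\eta)^{q_t+1}$. For the spatial part I would apply Cauchy-Schwarz in $L^2(\Omega_\mu)$: the bound $\|\partial_x^{\beta_2} K_\mu\|_{L^2(\Omega_\mu)} \le C \mu^{-|\beta_2|-d/2}$ follows by the scaling of $K_\mu$, while the $1$-periodicity of $\Psi_{-(q_t+1)}(\cdot, \tau)$ together with the $L^2(Y)$ estimate above gives $\|\Psi_{-(q_t+1)}(\cdot/\varepsilon, t/\varepsilon^2)\|_{L^2(\Omega_\mu)} \le C \mu^{d/2}\|\boldsymbol \chi\|_{L^2}/\omega_1^{q_t+1}$. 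Finally, the chain rule gives $|\partial_x^{\beta_1} f(x/\varepsilon)| \le \varepsilon^{-|\beta_1|}\|\partial_y^{\beta_1} f\|_{L^\infty}$, which is pulled out of the integral. Multiplying these factors reproduces exactly the claimed right-hand side.

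The delicate step is carrying out the iterated integration by parts cleanly with the two-scale oscillatory factor $\Psi(x/\varepsilon, t/\varepsilon^2)$: one must verify simultaneously that the antiderivatives $\Psi_{-k}$ are well-defined $L^2$-functions with $\tau$-uniform bounds, which relies crucially on $\omega_1 > 0$, and that every boundary contribution at $t=0$ and $t=\eta$ vanishes after each round, which uses the vanishing of $K^0$ and its derivatives at the endpoints of its support. Once these two points are settled, the remaining work is a routine application of Leibniz's rule, Cauchy-Schwarz, and the periodicity scaling.
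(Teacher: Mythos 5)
Your proposal is correct and follows essentially the same route as the paper: repeated integration by parts in the fast time variable against the eigenfunction expansion of $\Psi$ (you work with the resummed antiderivatives $\Psi_{-k}$, the paper equivalently integrates by parts mode-by-mode and collects the factors $c_j^{q_t+1}$ into a single function $g$), followed by Cauchy--Schwarz in space and the $L^2$-periodicity rescaling of \Cref{lemma:l2_per}. The two key points you flag --- the $\tau$-uniform $L^2(Y)$ bound on the antiderivatives via $\omega_j \ge \omega_1 > 0$, and the vanishing of all boundary terms from the construction of the $\mathbb{K}_0$ kernels --- are exactly the ingredients the paper relies on, and your bookkeeping of the factors $\varepsilon^{-|\beta_1|}$, $\mu^{-|\beta_2|}$ and $(\varepsilon^2/\eta)^{q_t+1}$ matches the stated estimate.
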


\begin{proof}
  To simplify notation in the following, we introduce $\psi_j(t)$ such that
\begin{equation}
  \label{eq:psi}
  \Psi(x, t) := \sum_{j=1}^\infty \boldsymbol \chi_j \phi_j(x) \psi_j(t)\,, \qquad \text{where}\quad \psi_j(t) := e^{(-\alpha + i)\omega_j t}\,.
\end{equation}
As in \Cref{lemma:v}, $\phi_j$ and $\omega_j$, $j \ge 0$ are the
eigenfunctions and corresponding eigenvalues of the operator
$-L_{yy}$ and $\boldsymbol \chi_j$ are the expansion coefficients one obtains
when expressing $\boldsymbol \chi$ in terms of the eigenfunction basis
$\phi_j$.
 Note that the time derivatives of $\psi_j$ can be
  expressed in terms of the original function times a constant,
\begin{align*}
  \frac{d}{dt} \psi_j\left(\frac{\eta t}{\varepsilon^2}\right) = - \frac{1}{c_j} \psi_j\left(\frac{\eta t}{\varepsilon^2}\right)\,, \quad \text{with} \quad c_j := \frac{\varepsilon^2}{(\alpha - i)\omega_j \eta}\,.
\end{align*}
Repeated  application of integration by parts thus yields
\begin{align}\label{eq:cq_ibp}
  \int_{0}^\eta K^0_\eta(t) u(x, t) \psi_j(t/\varepsilon^2) dt
&= - c_j \int_{0}^1 K^0(t) u(x, \eta t) \frac{d}{dt} \psi_j\left(\frac{\eta t}{\varepsilon^2}\right) dt \nonumber \\
&= c_j^{q+1} \int_{0}^1 \partial_t^{q+1} \left(K^0(t) u(x, \eta t)\right) \psi_j\left(\frac{\eta t}{\varepsilon^2}\right) dt\,. 
\end{align}
Since $\eta < 1$, we can moreover bound the absolute value of
$\partial_t^{q+1}(K^0(t) u(x, \eta t))$ as
\begin{align}\label{eq:prod_leibniz}
\left| \partial_t^{q+1} (K^0(t) u(x, \eta t)) \right|
&\le C  \max_{\rho \le q_t+1} \sup_{t \in [0, \eta]} |\partial_t^\rho u(x, t)|
\,.
\end{align}
Using the definition of $\Psi$, \cref{eq:psi}, the equality
\cref{eq:cq_ibp} and the fact that the averaging kernel $K^0(t)$ is zero for
$t < 0$ since $K^0(t) \in \mathbb{K}_0^{q_t, p_t}$, we thus find that
\begin{align}\label{eq:k_u_psi}
  \int_{0}^\eta K^0_\eta(t)  u(x, t) \Psi\left(\frac{x}{\varepsilon}, \frac{t}{\varepsilon^2}\right) dt
&= \sum_{j=1}^\infty \boldsymbol \chi_j \phi_j\left(\frac{x}{\varepsilon}\right) \int_{0}^\eta K^0_\eta(t) u(x, t) \psi_j\left(\frac{t}{\varepsilon^2}\right) dt \\
&=  \int_0^1  \partial_t^{q+1} \left(K^0(t) u(x, \eta t)\right) g^\varepsilon(x, \eta t) dt\,, \nonumber  
\end{align}
where
\begin{align*}
  g(x, t) := \sum_{j=1}^\infty c_j^{q+1} \boldsymbol \chi_j \phi_j(x) \psi_j(t) \qquad\text{and} \qquad g^\varepsilon(x, t) = g(x/\varepsilon, t/\varepsilon^2)\,.
\end{align*}
Furthermore, we let for shortness of notation,
$k_\mu^\varepsilon(x) := \left(\partial_x^{\beta_1}
f(x/\varepsilon)\right) \left(\partial_x^{\beta_2} K_\mu(x)\right)$.
It then follows by \cref{eq:prod_leibniz} and \cref{eq:k_u_psi} that
\begin{align*}
    \left| \int_{0}^\eta   K^0_\eta(t)   \int_{\Omega_\mu}\right. & \left. \left(\partial_x^{\beta_1} f(x/\varepsilon)\right) \left(\partial_x^{\beta_2} K_\mu(x)\right)  u(x, t) \Psi\left({x}/{\varepsilon}, {t}/{\varepsilon^2}\right) dx dt \right| \\
&=
\left| \int_{\Omega_\mu} k_\mu^\varepsilon(x) \int_{0}^\eta K^0_\eta(t)  u(x, t) \Psi\left(x/\varepsilon, t/\varepsilon^2\right) dt dx \right| \\
&\le \int_0^1 \int_{\Omega_\mu} \left| \partial_t^{q+1} \left(K^0(t) u(x, \eta t)\right) \right|
   \left| k_\mu^\varepsilon(x) g^\varepsilon(x, \eta t)
\right| dx dt \\
&\le C  \max_{\rho \le q_t+1} \sup_{t \in [0, \eta]} \|\partial_t^\rho u(\cdot, t)\|_{L^\infty} \int_{0}^1
\int_{\Omega_\mu}\left|  k_\mu^\varepsilon(x) g^\varepsilon(x, \eta t)
 \right| dx dt
\,.
\end{align*}
Rescaling of the spatial integral and application of the Cauchy-Schwarz inequality yields
\begin{align*}
  &\int_{\Omega_\mu} | k^\varepsilon_\mu (x) g^\varepsilon(x, \eta t))| dx
    = \int_{[-1, 1]^d} \left| \mu^{-|\beta_1| - |\beta_2|} \partial_x^{\beta_1} f(\mu x/\varepsilon) \partial_x^{\beta_2} K(x) g^\varepsilon(\mu x, \eta t) \right| dx
\\ &\quad\le \left(\int_{[-1, 1]^d} |\mu^{-|\beta_1| - |\beta_2|} \partial_x^{\beta_1} f(\mu x/\varepsilon) \partial_x^{\beta_2} K(x)|^2 dx\right)^{1/2} \left( \int_{[-1, 1]^d} |g^\varepsilon(\mu x, \eta t)|^2 dx\right)^{1/2}.
\end{align*}
Note that $g(x, t)$ is 1-periodic in space and $\mu > \varepsilon$, which implies that the
latter integral can be bounded by the corresponding
$\|\cdot\|_{L^2(Y)}$-norm using \Cref{lemma:l2_per} below,
\begin{align*}
\int_{[-1, 1]^d} |g^\varepsilon(\mu x, \eta t)|^2 dx =
  \int_{[-1, 1]^d} \left|g\left({\mu x}/{\varepsilon}, {\eta t}/{\varepsilon^2}\right)\right|^2 dx
\le  C \left\|g\left(\cdot , {\eta t}/{\varepsilon^2}\right)\right\|_{L^2(Y)}^2\,.
\end{align*}
Since the absolute values of the eigenvalues $\omega_j$ are
increasing with $j$, all $c_j$, $j \ge 1$ can be bounded by
$c_1$, the constant involving $\omega_1$ the smallest non-zero eigenvalue
\begin{align}\label{eq:c_j_bound}
  |c_j| = \left|\frac{\varepsilon^2}{(\alpha - i)\omega_j \eta}\right|
= \frac{1}{\omega_j} \frac{\varepsilon^2}{\eta \sqrt{1 + \alpha^2}} \le
 \frac{1}{\omega_1} \frac{\varepsilon^2}{\eta \sqrt{1 + \alpha^2}} = |c_1|\,, \qquad j = 1, 2, ...\,.
\end{align}
One can therefore show using the orthogonality of the basis functions
$\phi_j$ and the boundedness of $\psi_j$, that
\begin{align*}
\left\|g\left(\cdot ,  {\eta t}/{\varepsilon^2}\right)\right\|_{L^2(Y)}^2
&= \sum_{j=1}^\infty \left|c_j^{q_t+1} \boldsymbol \chi_j \psi_j\left({\eta t}/{\varepsilon^2}\right)  \right|^2 
\le |c_1|^{2(q_t+1)} \sum_{j=1}^\infty |\boldsymbol \chi_j|^2 \\ &= |c_1|^{2(q_t+1)} \|\boldsymbol \chi\|^2_{L^2(Y)}\,,
\end{align*}
for any time $t \ge 0$.
It furthermore holds that
\begin{align*}
  \int_{[-1, 1]^d} |\mu^{-|\beta_1| - |\beta_2|} \partial_x^{\beta_1} f(\mu x/\varepsilon) \partial_x^{\beta_2} K(x)|^2 dx
  \le  \frac{1}{\mu^{2|\beta_2|} \varepsilon^{2|\beta_1|}} \sup_{y \in Y} |\partial_y^{\beta_1} f(y)|^2 \|K\|^2_{H^{|\beta_2|}}.
\end{align*}
Therefore, we obtain
\begin{align*}
   \int_{0}^1 \int_{\Omega_\mu} &| k^\varepsilon_\mu (x) g^\varepsilon(x, \eta t))| dx dt  \le
   \|\boldsymbol \chi\|_{L^2} \|K\|_{H^{|\beta_2|}} \|K^0\|_{W^{1, q_t +1}} \frac{|c_1|^{(q_t+1)}}{\mu^{|\beta_2|} \varepsilon^{|\beta_1|}} \sup_{y \in Y} |\partial_y^{\beta_1} f(y)|
\end{align*}
and the result in the lemma follows.
\end{proof}

In order to derive the above result, we used the following lemma from \cite{henrik1},
which is a useful tool when working with periodic functions.
For the convenience of the reader, a
short proof is given here.

\begin{lemma}\label{lemma:l2_per}
  Assume  $g \in L^2(Y)$ is 1-periodic.
  Let $\theta \ge 1$ and $a < b$ be given real constants, then
  \[\int_{[a, b]^d} |g(\theta x)|^2 dx \le  C\|g\|_{L^2(Y)}^2\,,\]
  where $C$ only depends on $a, b$ and the dimension $d$ but is independent of $\theta$.
\end{lemma}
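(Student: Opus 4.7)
The plan is to change variables to turn the integral into one over a box of size $\theta(b-a)$ on a periodic integrand, then exploit the fact that this larger box can be covered by a finite (but $\theta$-dependent) number of unit periodicity cells, and finally verify that the $\theta^{-d}$ factor from the change of variables cancels the growth of that cell-count.

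First I would substitute $y = \theta x$ in each coordinate, so that $dy = \theta^d\, dx$ and the left-hand side becomes
\[
\int_{[a,b]^d}|g(\theta x)|^2\,dx \;=\; \theta^{-d}\int_{[\theta a,\theta b]^d}|g(y)|^2\,dy.
\]
Next, since $g$ is $1$-periodic in each coordinate, I would cover the cube $[\theta a,\theta b]^d$ by a grid of unit cells: let $N := \lceil\theta(b-a)\rceil+1$, so $[\theta a,\theta b]^d$ is contained in a disjoint (up to boundaries) union of $N^d$ translates of $Y=[0,1]^d$. Using periodicity of $g$, the integral of $|g|^2$ over each such translate equals $\|g\|_{L^2(Y)}^2$, so
\[
\int_{[\theta a,\theta b]^d}|g(y)|^2\,dy \;\le\; N^d\,\|g\|_{L^2(Y)}^2.
\]

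Combining these two bounds gives $\int_{[a,b]^d}|g(\theta x)|^2\,dx \le \theta^{-d} N^d\|g\|_{L^2(Y)}^2$. Finally, since $N \le \theta(b-a)+2$ and $\theta\ge 1$,
\[
\theta^{-d}N^d \;\le\; \bigl((b-a)+2/\theta\bigr)^d \;\le\; \bigl((b-a)+2\bigr)^d,
\]
so taking $C=((b-a)+2)^d$, which depends only on $a,b$ and $d$, yields the claim. There is no real obstacle here; the only subtlety is making sure the constant is genuinely independent of $\theta$, which is precisely the cancellation between the $\theta^{-d}$ Jacobian factor and the $\theta^d$-growth in the number of unit cells needed to cover the stretched box.
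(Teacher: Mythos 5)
Your proof is correct and follows essentially the same route as the paper: change variables to stretch the domain, bound the integral by counting how many unit periodicity cells cover the stretched box, and observe that the $\theta^{-d}$ Jacobian factor cancels the $\theta^d$ growth of the cell count since $\theta \ge 1$. The paper phrases the counting argument one coordinate direction at a time rather than covering the cube directly, but this is a cosmetic difference and both yield a constant depending only on $a$, $b$ and $d$.
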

\begin{proof} Let
$K = \lfloor (b-a)\theta \rfloor $,
the number of full periods of $g(\theta x)$ in the interval $[a, b]$ (in one coordinate direction).
Consider first a rescaled integral in the $j$th coordinate direction. As
$a\theta + (K+1) > b\theta$ and $|g|^2 > 0$, it holds that
\begin{align*}
 \int_{a\theta}^{b\theta} |g(x)|^2 dx_j
  \le 
  \sum_{k=0}^K \int_{a\theta + k}^{a\theta + (k+1)} |g(x)|^2 dx_j
&= (K+1) \int_0^1 |g(x)|^2 dx_j.
\end{align*}
Therefore, we find that
\begin{align*}
  \int_{[a, b]^d} |g(\theta x)|^2 dx
  &= 
    \frac{1}{\theta^d} \int_{a\theta}^{b\theta} \cdots \int_{a\theta}^{b\theta} |g(x)|^2 dx_1 \cdots dx_d \\
&\le \frac{K+1}{\theta^d} \int_{a\theta}^{b\theta} \cdots \int_0^1 |g(x)|^2 dx_1 \cdots dx_d \le \left(\frac{K + 1}{\theta}\right)^d \|g\|_{L^2(Y)}^2.
\end{align*}
Since $\theta \ge 1$,
\[\frac{K + 1}{\theta} \le {b-a} + \frac{1}{\theta} \le {b-a} + 1\,,\]
which entails that the constant multiplying $\|g\|_{L^2(Y)}^2$ is independent of $\theta$.
\end{proof}

\section{HMM approximation errors}\label{sec:avg_ll}

In this section, we prove bounds for the averaging error in each of
the three models (M1), (M2) and (M3) described in \Cref{sec:hmm}.
These error bounds depend on the parameters of the kernels used for
averaging in time and space, $p_x, q_x, p_t$ and $q_t$ as well as
the sizes of the averaging domains. Given a sufficiently regular
solution $\m_0$, which we assume in (A5), choosing high values for
the kernel parameters makes it possible to reduce the averaging error to
$\mathcal{O}(\varepsilon)$ as stated in the following theorem.

\begin{theorem}\label{thm:hmm_error}
  Assume (A1)-(A5) hold and let $\A^H$ be the homogenized
  coefficient matrix corresponding to $a^\varepsilon(x)$, given in
  \cref{eq:AH}.  Consider $\bar \K_{\eta, \mu}$ with averaging
  kernels $K \in \mathbb{K}^{p_x, q_x}$ and
  $K^0 \in \mathbb{K}^{p_t, q_t}_0$ and let $\varepsilon < \mu < 1$
  and $\varepsilon < \eta < \min(1, T^\varepsilon) $, where
  $T^\varepsilon$ is given by \cref{eq:Teps}.  Then
\begin{align*}
  &\left|\bar \K_{\mu, \eta} (a^\varepsilon \bnabla \m^\varepsilon) - \bnabla \m_0(0,0) \A^H \right|
 \\ &\hspace{4cm}\le C
 \left( \varepsilon + \left(\frac{\varepsilon}{\mu}\right)^{q_x + 2} + \mu^{p_x + 1} + \eta^{p_t + 1} + \left(\frac{\varepsilon^2}{\eta}\right)^{q_t+1}
\right)\,, \\
&\left|\bar \K_{\mu, \eta} (\bnabla \cdot (a^\varepsilon \bnabla \m^\varepsilon)) - \bnabla \cdot (\bnabla \m_0(0,0) \A^H) \right| \\ &\hspace{4cm}\le  C
 \left(\varepsilon + \left(\frac{\varepsilon}{\mu}\right)^{q_x + 2} + \mu^{p_x + 1} + \eta^{p_t + 1} + \frac{1}{{\mu}}  \left(\frac{\varepsilon^2}{ \eta}\right)^{q_t+1}
\right)\,,
\end{align*}
and
\begin{align*}
&\left|\bar \K_{\mu, \eta} (\m^\varepsilon \times \bnabla \cdot (a^\varepsilon \bnabla \m^\varepsilon))- \m_0(0,0) \times \bnabla \cdot (\bnabla \m_0(0,0) \A^H)\right| \\
 &\hspace{4cm} \le C
 \left(\varepsilon + \left(\frac{\varepsilon}{\mu}\right)^{q_x + 2} + \mu^{p_x + 1} + \eta^{p_t + 1} + \frac{1}{{\mu}}  \left(\frac{\varepsilon^2}{\eta}\right)^{q_t+1}
\right)\,.
\end{align*}
In all three cases, the constant $C$ is independent of $\varepsilon, \mu$ and $\eta$ but might depend on $K$, $K^0$ and $T$.
\end{theorem}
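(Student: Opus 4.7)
The plan is to reduce each of the three upscaling errors to applications of the averaging lemmas from \Cref{sec:avg}, using the truncated asymptotic expansion $\tilde\m_J^\varepsilon$ from \cref{eq:asymptotic_expansion} as a stepping stone between $\m^\varepsilon$ and the slow/periodic structure. For the flux bound, I would first split
\[\bar\K_{\mu,\eta}(a^\varepsilon \bnabla\m^\varepsilon) = \bar\K_{\mu,\eta}(a^\varepsilon \bnabla\tilde\m_J^\varepsilon) + \bar\K_{\mu,\eta}\bigl(a^\varepsilon \bnabla(\m^\varepsilon - \tilde\m_J^\varepsilon)\bigr),\]
and bound the second piece by combining the $L^2$-estimate in \Cref{lemma:k_boundedness} with the $H^1$ expansion-error bound \cref{eq:error_mJbest}; taking $J$ large enough (depending on $d$ and $\sigma$) renders this an $O(\varepsilon)$ contribution.

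To handle $\bar\K_{\mu,\eta}(a^\varepsilon\bnabla\tilde\m_J^\varepsilon)$, I would unfold the two-scale chain rule. Using $\m_1 = \bnabla\m_0\boldsymbol\chi + \v$ from \cref{eq:m1}, the leading contribution has the form
\[a(x/\varepsilon)\bnabla\m_0(x,t)\bigl(\I + (\bnabla_y\boldsymbol\chi(x/\varepsilon))^T\bigr) + a(x/\varepsilon)\,(\bnabla_y\v)(x, x/\varepsilon, t, t/\varepsilon^2),\]
modulo $O(\varepsilon)$ remainders bounded through \cref{eq:mj_norm_bound} and \Cref{lemma:k_boundedness}. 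The first summand is a smooth slow function multiplied by a $1$-periodic function of $x/\varepsilon$, so \Cref{lemma:avg1} together with the definition \cref{eq:AH} shows that its average equals $\bnabla\m_0(0,0)\A^H$ plus the advertised kernel-dependent errors. The $\v$-contribution is treated with \Cref{lemma:avg_v_base}, using the explicit representation of $\v$ and $\Psi$ from \Cref{lemma:v} (after expanding $\bnabla_y\Psi$ in the basis $\bnabla_y\phi_j$), which supplies the $(\varepsilon^2/\eta)^{q_t+1}$ term.

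The field model is handled by the same program but with $\L\m^\varepsilon$ replacing $a^\varepsilon\bnabla\m^\varepsilon$ and taking $J\ge 2$ in the expansion. Standard two-scale homogenization cancels the a priori $\varepsilon^{-2}$ and $\varepsilon^{-1}$ contributions at the leading order (the former through the cell problem \cref{eq:cell_problem}), leaving $O(1)$ terms whose averaged value is $\bnabla\cdot(\bnabla\m_0(0,0)\A^H)$ plus $\v$-type pieces. The extra $1/\mu$ factor in this bound emerges when shifting one $\bnabla$-factor from $\L$ off of the oscillatory part $\v$ and onto the averaging kernel via integration by parts, producing $\partial_x K_\mu(x)$ with $|\beta_2|=1$ in \Cref{lemma:avg_v_base}. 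For the torque model, I would then multiply the same expansion of $\L\tilde\m_J^\varepsilon$ by $\m^\varepsilon = \m_0(0,0) + O(\varepsilon) + \varepsilon\m_1 + \cdots$, exploiting $\m_0\times\m_0 = 0$ and the orthogonality $\v\perp\m_0$ from \cref{eq:m1_v_orth} to suppress several cross-terms and recover an identical bound.

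The main obstacle will be the two-scale bookkeeping in the field and torque models: classifying every term in $\L\tilde\m_J^\varepsilon$ and $\m^\varepsilon\times\L\tilde\m_J^\varepsilon$ as either slow-times-periodic (controlled by \Cref{lemma:avg1}), oscillatory of the $\v$-type (controlled by \Cref{lemma:avg_v_base}), or an admissible remainder (controlled by \Cref{lemma:k_boundedness}), and verifying that cancellations holding algebraically before averaging are preserved by the averaging operator. Getting the integration-by-parts step that exposes the $1/\mu$ factor right, and confirming that it is the only place where derivatives need to be transferred to the kernel, will be the most delicate bookkeeping point.
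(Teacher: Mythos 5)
Your overall strategy is the same as the paper's: split off the expansion error $\m^\varepsilon-\tilde\m_J^\varepsilon$ and kill it with the $L^2$-bound of \Cref{lemma:k_boundedness} plus \cref{eq:error_mJbest} for large $J$; treat the slow-times-periodic part of the expansion with \Cref{lemma:avg1} and the definition of $\A^H$; treat the $\v$-part with \Cref{lemma:avg_v_base} after integrating by parts onto the kernel, which is indeed exactly where the $1/\mu$ factor comes from in the second and third estimates. The only organizational difference is that the paper keeps just $\m_0+\varepsilon\m_1$ in the "main" part and pushes $\sum_{j\ge2}\varepsilon^j\m_j$ into the remainder; for those higher correctors under $\L$, note that sup-norm bounds via \cref{eq:mj_norm_bound} alone give only $O(1)$ (the $\varepsilon^{-2}\L_{yy}$ piece eats the $\varepsilon^j$ prefactor), so you must additionally use that $\L_{yy}\m_j$ has zero $Y$-average and invoke \Cref{lemma:avg_space} — this fits your "slow-times-periodic" category but deserves to be said explicitly.

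There is one concrete gap in the torque model. You propose to suppress the cross-terms using $\m_0\times\m_0=0$ and $\v\perp\m_0$ from \cref{eq:m1_v_orth}, but the problematic term is $\varepsilon^2\,\v^\varepsilon\times\L\v^\varepsilon$, which neither of those identities touches: orthogonality to $\m_0$ says nothing about $\v\times\L\v$, and crude bounds again give only $O(1)$ because of the $\varepsilon^{-2}\L_{yy}$ contribution. Moreover \Cref{lemma:avg_v_base} does not apply here since the integrand is quadratic in $\Psi$. The cancellation the paper actually uses is the antisymmetry of the cross product under integration by parts in the fast variable,
\begin{align*}
\int_0^1 \v\times\partial_{y_\ell}\bigl(a\,\partial_{y_\ell}\v\bigr)\,dy_\ell
= -\int_0^1 a\,\partial_{y_\ell}\v\times\partial_{y_\ell}\v\,dy_\ell = 0\,,
\end{align*}
so that $\v\times\L_{yy}\v$ has zero $Y$-average and \Cref{lemma:avg_space}, combined with the exponential decay of $\v$ in $\tau$, applies. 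Without this (or an equivalent argument exploiting the decay of $\v$ together with the vanishing of $K^0$ near $t=0$), your torque estimate would stall at $O(1)$ for this term. Everything else in your plan lines up with the paper's proof.
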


Given that the initial data to the micro problem is chosen such that
$\partial_x^\beta \m(0, 0)$, $|\beta| \le 2$ agree with the
corresponding derivatives of the macro solution $\M$ at the point in
time and space that one averages around, \Cref{thm:hmm_error}
provides estimates for $E_i$, $i = 1, 2, 3$ as given in \Cref{sec:hmm}.

To prove the estimates in \Cref{thm:hmm_error}, we consider an
approximation
$\m^\mathrm{app} := \m_0(x,t) + \varepsilon \m_1(x, x/\varepsilon,
t, t/\varepsilon^2)$ to $\m^\varepsilon$.
We then proceed in a similar way
for all three models. We first show that averaging of
$\m^\mathrm{app}$ results in approximations to the quantities
required to complete the models up to a certain error. The
contribution of $\m^\varepsilon - \m^\mathrm{app}$ only gives a
remainder term resulting in an additional error.  More precisely, it
holds for the approximation error in the first model, (M1), that
\begin{align}\label{eq:app1_split}
  \begin{split}
    |\bar \K_{\mu, \eta} (a^\varepsilon \bnabla \m^\varepsilon) - \bnabla \m_0(0,0) \A^H|
    &\le  |\bar \K_{\mu, \eta} (a^\varepsilon \bnabla \m^\mathrm{app}) - \bnabla \m_0(0,0) \A^H| \\ &\quad+ |\bar \K_{\mu, \eta} (a^\varepsilon \bnabla (\m^\varepsilon - \m^\mathrm{app})| \\ &=: e_{M1} + r_{M1}.
  \end{split}
\end{align}
Similarly, we have for (M2) that
\begin{align}\label{eq:app2_split}
  |\bar \K_{\mu, \eta} & (\L \m^\varepsilon) - \bnabla \cdot (\bnabla \m_0(0,0) \A^H)| \nonumber
  \\ &\le  |\bar \K_{\mu, \eta} (\L \m^\mathrm{app}) - \bnabla \cdot(\bnabla \m_0(0,0) \A^H)|
 + |\bar \K_{\mu, \eta} (\L (\m^\varepsilon - \m^\mathrm{app}))| \\&=: e_{M2} + r_{M2} \,, \nonumber
\end{align}
and in case of (M3)
\begin{align}\label{eq:app3_split}
  \begin{split}
     |\bar \K_{\mu, \eta} & \left(\m^\varepsilon \times \bnabla \cdot (a^\varepsilon \bnabla \m^\varepsilon)\right)
- \m_0(0,0) \times \bnabla \cdot (\bnabla \m_0(0,0) \A^H)| \\ &\le
|\m^\mathrm{app} \times \L \m^\mathrm{app} - \m_0(0,0) \times \bnabla \cdot (\bnabla \m_0(0,0) \A^H)| \\& +
  |\m^\mathrm{app} \times \L (\m^\varepsilon - \m^\mathrm{app}) +
  (\m^\varepsilon - \m^\mathrm{app}) \times \L \m^\mathrm{app}| =: e_{M3} + r_{M3}\,.
  \end{split}
\end{align}
Each of the approximation errors $e_{Mi}$, $i = 1, 2, 3$, is then
bounded using \Cref{lemma:avg_space} -- \Cref{lemma:avg_v_base} as
shown in the following sections. Finally, estimates for the norms of
the remainder terms $r_{Mi}$ are given in \cref{sec:remainder},
which completes the proof of \Cref{thm:hmm_error}.

For the derivations, we define the $y$-periodic functions
\begin{equation}
  \label{eq:def_g}
  \g(y) := a(y) (\I + \bnabla_y\boldsymbol \chi(y))\,, \quad \text{and} \quad \h(y) = a(y)\boldsymbol \chi(y).
\end{equation}
Since by assumption $(A1)$, $a \in C^\infty(Y)$, the same holds for
$\boldsymbol \chi$ and thus also $\g, \h \in C^\infty(Y)$.  Note
that $\g$ is a matrix-valued function,
$\g: \Real \to \Real^{d \times d}$, and we denote its elements by
$\g_{ij}$.
By the definition of the homogenized matrix $\A^H$ in \cref{eq:AH}, it holds that
\begin{equation}
  \label{eq:avg_g}
  \A^H_{ij} = \int_Y \g_{ij}(y) dy \,.
\end{equation}
The average of $\h$ is in general non-zero.
In the following, we use the notation
$\g^\varepsilon := \g(x/\varepsilon),
\h^\varepsilon:=\h(x/\varepsilon)$ and
$\boldsymbol \chi^\varepsilon := \boldsymbol \chi(x/\varepsilon)$.
Moreover, we define
\[\u(x, y, t) := \m_0(x,t) + \varepsilon \bnabla_x \m_0(x,t)\boldsymbol \chi(y),\]
and let $\u^\varepsilon := \u(x, x/\varepsilon, t)$.  Together with
\cref{eq:m1}, this implies that
\begin{equation}\label{eq:u_v_split}
\m^\mathrm{app} = \m_0(x,t) + \varepsilon \m_1(x, x/\varepsilon, t, t/\varepsilon^2)  = \u^\varepsilon + \varepsilon \v^\varepsilon,
\end{equation}
where $\v^\varepsilon = \v(x, x/\varepsilon, t, t/\varepsilon^2)$,
for $\v$ as given by \Cref{lemma:v}, and
\begin{equation}
  \label{eq:agrad_mapp}
     a^\varepsilon \bnabla \u(x, x/\varepsilon, t) =
   \bnabla_x \m_0(x,t) \g(x/\varepsilon) + \varepsilon \bnabla_x (\bnabla_x \m_0(x,t) \h(x/\varepsilon)).
\end{equation}
%
Furthermore, it follows from the definition of $\boldsymbol \chi$ via the cell
problem, 
that the divergence of $\g$ is zero,
\begin{equation}\label{eq:div_g}
\bnabla_y \cdot \g(y) =   \bnabla_y \cdot (a(y) (\I + \bnabla_y\boldsymbol \chi(y)))
=\bnabla_y a(y) + \bnabla_y \cdot (a(y) \bnabla_y\boldsymbol \chi(y)) = \boldsymbol 0\,.
\end{equation}
Hence, we have
\begin{align}\label{eq:L_mapp}
  \L \m^\mathrm{app} &=
  \bnabla \cdot \left(a(x/\varepsilon) \bnabla \left(\u(x, x/\varepsilon, t)
  + \varepsilon \v(x, x/\varepsilon, t, t/\varepsilon^2)\right)\right) \nonumber \\&=
  \bnabla_x\cdot(\bnabla_x \m_0 \g^\varepsilon) + \bnabla_y \cdot ( \bnabla_x (\bnabla_x \m_0 \h^\varepsilon)) + \varepsilon \bnabla_x \cdot (\bnabla_x (\bnabla_x \m_0 \h^\varepsilon)) + \varepsilon \L \v^\varepsilon.
\end{align}

Finally, for matters of brevity, we define a short-hand notation
for the error terms in \Cref{lemma:avg_space} and \Cref{lemma:avg1},
\begin{align}
  e_{44} (\varepsilon, \mu) :&= \left(\frac{\varepsilon}{\mu}\right)^{q_x +2} + \mu^{p_x + 1},\\
  e_{46}(\varepsilon, \mu, \eta) :&=  \left(\frac{\varepsilon}{\mu}\right)^{q_x +2} + \mu^{p_x + 1}  + \eta^{p_t + 1} .
\end{align}

 \subsection{Approximating  $\bnabla \m_0 \A^H$}

 We now apply the lemmas from the previous sections to obtain an
 estimate for the error $e_{M1}$ as given by  \cref{eq:app1_split}.

 \begin{lemma}\label{lemma:M1}
   Under the assumptions in \Cref{thm:hmm_error}, it holds that
\begin{align*}
|\bar \K_{\mu, \eta} (a^\varepsilon \bnabla \m^\mathrm{app}) &- \bnabla \m_0(0,0) \A^H| \\
  &\le  C \left(\left(\frac{\varepsilon}{\mu}\right)^{q_x +2} + \mu^{p_x + 1} + \eta^{p_t + 1}
+ \left(\frac{\varepsilon^2}{\eta }\right)^{q_t+1} + \varepsilon
 \right)\,,
\end{align*}
where $C$ is independent of $\varepsilon, \mu$ and $\eta$.
\end{lemma}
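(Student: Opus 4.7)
The plan is to expand $a^\varepsilon\bnabla\m^\mathrm{app}$ into three pieces that separate the smooth macro part, the $\h$--corrector, and the oscillatory $\v$--corrector, and then bound each piece by a different averaging lemma. From \cref{eq:u_v_split} and \cref{eq:agrad_mapp},
\begin{align*}
  a^\varepsilon\bnabla\m^\mathrm{app}
  = \bnabla\m_0\,\g^\varepsilon
  + \varepsilon\,\bnabla(\bnabla\m_0\,\h^\varepsilon)
  + \varepsilon\,a^\varepsilon\bnabla\v^\varepsilon,
\end{align*}
so the error splits as $|A|+|B|+|C|$ with $A := \bar\K_{\mu,\eta}(\bnabla\m_0\,\g^\varepsilon) - \bnabla\m_0(0,0)\A^H$, $B := \varepsilon\,\bar\K_{\mu,\eta}(\bnabla(\bnabla\m_0\,\h^\varepsilon))$, and $C := \varepsilon\,\bar\K_{\mu,\eta}(a^\varepsilon\bnabla\v^\varepsilon)$.

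Bounding $A$ and $B$ is routine. Since $\g$ is $1$-periodic with mean $\A^H$ by \cref{eq:avg_g}, and $\bnabla\m_0$ is a slow function with the regularity required by \Cref{lemma:avg1} thanks to (A5), applying \Cref{lemma:avg1} entry-wise gives $|A| \le C\,e_{46}(\varepsilon,\mu,\eta)$. For $B$, the product rule yields $\varepsilon\bnabla(\bnabla\m_0\,\h^\varepsilon) = \varepsilon\,\bnabla^2\m_0\,\h^\varepsilon + \bnabla\m_0\,(\bnabla_y\h)(x/\varepsilon)$. The first summand is of size $\varepsilon$ uniformly, so \Cref{lemma:k_boundedness} bounds its averaged contribution by $C\varepsilon$. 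For the second summand, $\bnabla_y\h$ is periodic with zero mean (since $\h$ is periodic), so \Cref{lemma:avg1} delivers only the error term $C\,e_{46}(\varepsilon,\mu,\eta)$ with no surviving leading contribution.

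The main difficulty is $C$. The naive bound $\|a^\varepsilon\bnabla\v^\varepsilon\|_{L^\infty} = \mathcal{O}(1/\varepsilon)$ would only yield $\mathcal{O}(1)$, so it is essential to exploit that $\v$ is oscillatory in fast time and decays algebraically under averaging via \Cref{lemma:avg_v_base}. I would integrate by parts in $x$ inside $\bar\K_{\mu,\eta}$; because $K_\mu \in C_c^{q_x}$ is supported in $\Omega_\mu$ there are no boundary contributions, and
\begin{align*}
  C = -\varepsilon\int_0^\eta K^0_\eta(t)\int_{\Omega_\mu}
  \bigl[(\bnabla K_\mu(x))\,a^\varepsilon + K_\mu(x)\,\bnabla a^\varepsilon\bigr]\cdot\v^\varepsilon\,dx\,dt.
\end{align*}
Using the representation $\v = -\Re(\f_\v\Psi)$ from \Cref{lemma:v} with $\f_\v = \bnabla\m_0 - i\,\m_0\times\bnabla\m_0$, both summands fit the template of \Cref{lemma:avg_v_base} with $u = \f_\v$ (smooth by (A5)) and $f = a$. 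The first summand has $\beta_1 = 0$, $|\beta_2|=1$, giving $\frac{1}{\mu}(\varepsilon^2/\eta)^{q_t+1}$, which after the prefactor $\varepsilon$ becomes $\frac{\varepsilon}{\mu}(\varepsilon^2/\eta)^{q_t+1}\le (\varepsilon^2/\eta)^{q_t+1}$ since $\varepsilon<\mu$. The second summand, after rewriting $\bnabla a^\varepsilon = \varepsilon^{-1}(\bnabla_y a)(x/\varepsilon)$, has $|\beta_1|=1$, $\beta_2 = 0$, giving $\frac{1}{\varepsilon}(\varepsilon^2/\eta)^{q_t+1}$, so the leading $\varepsilon$ cancels against $1/\varepsilon$ to produce $(\varepsilon^2/\eta)^{q_t+1}$.

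Summing $|A|$, $|B|$, and $|C|$ then yields exactly the stated bound. The only really delicate point is the bookkeeping of the powers of $\varepsilon$, $\mu$, and $\eta$ in $C$; all the regularity and periodicity structure needed for the averaging lemmas is guaranteed by (A1)--(A5) together with the explicit form of $\v$ from \Cref{lemma:v}.
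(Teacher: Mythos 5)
Your proof follows essentially the same route as the paper's: the same splitting of $\m^\mathrm{app}$ into $\u^\varepsilon+\varepsilon\v^\varepsilon$, \Cref{lemma:avg1} for the $\g^\varepsilon$-term (whose $y$-average is $\A^H$), \Cref{lemma:k_boundedness} for the $\mathcal{O}(\varepsilon)$ term involving $\h^\varepsilon$, and integration by parts followed by \Cref{lemma:avg_v_base} for the $\v$-part, with exactly the same $\beta_1,\beta_2$ bookkeeping and the same cancellation of the prefactor $\varepsilon$ against the $\varepsilon^{-1}$ coming from $\bnabla a^\varepsilon$.

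The one inaccuracy is in your expansion of the middle term. The correct identity is
$a^\varepsilon\bnabla\m^\mathrm{app}=\bnabla\m_0\,\g^\varepsilon+\varepsilon(\bnabla_x\bnabla_x\m_0)\h^\varepsilon+\varepsilon a^\varepsilon\bnabla\v^\varepsilon$:
the summand $\bnabla\m_0\,(\bnabla_y\h)(x/\varepsilon)$ that you generate by the product rule is spurious, because the contribution from differentiating $\boldsymbol\chi(x/\varepsilon)$ is already contained in $\g=a(\I+\bnabla_y\boldsymbol\chi)$ (in \cref{eq:agrad_mapp} the operator $\bnabla_x$ acts on the slow variable only). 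As written, your three pieces $A$, $B$, $C$ therefore do not sum to the quantity being estimated. The slip is harmless for the final bound, since the extra term is a slow function times a mean-zero periodic function and is itself $\mathcal{O}(e_{46}(\varepsilon,\mu,\eta))$ by \Cref{lemma:avg1}, exactly as you argue; but the decomposition should be corrected so that the triangle inequality is applied to a true identity.
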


\begin{proof}
  We start by splitting $e_{M1}$ according to \cref{eq:u_v_split},
  into a part without fast oscillations in time and a part
  containing $\v$,
  \begin{equation}\label{eq:e1_bound}
    e_{M1} \le \left|\bar \K_{\mu, \eta} (a^\varepsilon \bnabla \u^\varepsilon) - \bnabla \m_0(0,0) \A^H \right| + \left|\bar \K_{\mu, \eta} (\varepsilon a^\varepsilon \bnabla \v^\varepsilon)\right| = I_1 + I_2.
  \end{equation}
  In the following we use the notation $\g_\beta$ and $\A^H_\beta$
  to refer to the element in the corresponding matrix that
  multiplies $\partial^\beta \m_0$ according to \cref{eq:agrad_mapp}
  and similarly for $\h$.  It then follows by \cref{eq:agrad_mapp}
  that there are constants $c_\beta, d_\beta$ such that
  \[a^\varepsilon \bnabla \u^\varepsilon = \sum_{|\beta| = 1} c_\beta \partial^\beta_x \m_0(x,t) \g_\beta(x/\varepsilon) +\varepsilon \sum_{|\beta| = 2} d_\beta \partial^\beta_x \m_0(x,t) \h_\beta(x/\varepsilon). \]
  Since by assumption (A5), we have
  $\m_0(x,t) \in C^\infty(0,T; H^\infty(\Omega))$, an application of
  \Cref{lemma:avg1} together with \cref{eq:avg_g} yields that given a
  multi-index $\beta$, 
  \begin{equation}
  \label{eq:I1}
  |\bar \K_{\mu, \eta} (\partial^\beta_x \m_0 \g^\varepsilon_{\beta}) - \partial^\beta_x \m_0(0, 0) \A^H_{\beta}
| \le C e_{46}(\varepsilon, \mu, \eta).
\end{equation}
As furthermore by the boundedness of $\bar \K_{\eta, \mu}$,
\cref{eq:k_inf_bound} in \Cref{lemma:k_boundedness},
\begin{equation*}
  |\bar \K_{\mu, \eta} (\partial^\beta_x \m_0 \h_\beta^\varepsilon)| \le C,
\end{equation*}
\cref{eq:I1} implies that
\[I_1 \le C   \left(\varepsilon + e_{46}(\varepsilon, \mu, \eta)\right). \]
The remaining term in \cref{eq:e1_bound}, $I_2$, is rewritten using
integration by parts, which
together with the definition of $\v$ according to \Cref{lemma:v}
 yields
\begin{equation*}\label{eq:v_start}
  \begin{split}
I_2
&= \varepsilon \left|\Re\left(\int_{\Omega_\mu} \int_{-\eta}^\eta K^0_\eta(t)   \nabla \left(a\left({x}/{\varepsilon}\right) K_\mu(x)\right) \f_\v(x,  t) \Psi\left({x}/{\varepsilon}, t/{\varepsilon^2}\right) dt dx\right)\right|.
  \end{split}
\end{equation*}
The regularity assumption (A5) for $\m_0$ implies that
$\f_\v \in C^\infty(0, T; H^\infty(\Omega))$ and
\[\max_{r \le q_t +1} \sup_{t \in [0, \eta]} \|\partial_t^r \f_v(\cdot, t)\|_{L^\infty} \le C,\]
wherefore it follows by \Cref{lemma:avg_v_base} that
\begin{align*}
 |I_2|
  &\le C
    \left(\frac{\varepsilon}{\mu} \|a\|_{L^\infty} + \|\partial_y a\|_{L^\infty} \right)\left(\frac{\varepsilon^2}{ \eta}\right)^{q_t + 1}
    \le  C \left(\frac{\varepsilon^2}{\eta}\right)^{q_t + 1}.
\end{align*}
Hence, we obtain the estimate in the lemma.
\end{proof}

\subsection{Approximating  $\bnabla \cdot (\bnabla \m_0 \A^H)$}

To estimate the first part of the approximation error in the second
model, $e_{M2}$ as given  in \cref{eq:app2_split}
we 
proceed in a similar way as before. However, as we consider the
divergence of the gradient, more terms need to be estimated. This
results in the following lemma.

 \begin{lemma}\label{lemma:M2} 
   Under the assumptions in \Cref{thm:hmm_error} it holds that
   \begin{align*}
    |\bar \K_{\mu, \eta} (\bnabla \cdot (a^\varepsilon \bnabla \m^\mathrm{app})) &- \bnabla \cdot (\bnabla \m_0(0,0) \A^H)| \\&\le C
 \left( \left(\frac{\varepsilon}{\mu}\right)^{q_x + 2} + \mu^{p_x+1} + \eta^{p_t + 1} + \frac{1}{\mu}\left(\frac{\varepsilon^2}{\eta}\right)^{q_t + 1}
 + \varepsilon
\right),
   \end{align*}
where $C$ is independent of $\varepsilon, \mu$ and $\eta$.
\end{lemma}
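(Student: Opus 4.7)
My plan is to follow the same strategy as in the proof of \Cref{lemma:M1}, using the splitting $\m^\mathrm{app} = \u^\varepsilon + \varepsilon \v^\varepsilon$ from \cref{eq:u_v_split} together with the identity \cref{eq:L_mapp}, which writes $\L \m^\mathrm{app}$ as four explicit pieces. I would then bound the averaged contribution of each piece separately using the tools from \Cref{sec:avg}.

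For the leading piece $\bnabla_x \cdot (\bnabla_x \m_0 \g^\varepsilon)$ I would expand componentwise into products of derivatives of $\m_0$ (smooth in $(x,t)$ by (A5)) with the periodic entries $\g_{ij}(x/\varepsilon)$. \Cref{lemma:avg1} applies directly, and together with \cref{eq:avg_g} this produces $\bnabla \cdot (\bnabla \m_0(0,0) \A^H)$ up to an error of order $e_{46}(\varepsilon,\mu,\eta)$. For the mixed piece $\bnabla_y \cdot (\bnabla_x(\bnabla_x \m_0 \h^\varepsilon))$, a similar expansion gives products of second derivatives of $\m_0$ with $\partial_{y_l}\h_j(x/\varepsilon)$; since $\h$ is periodic, each factor $\partial_{y_l}\h_j$ has zero mean over $Y$, and \Cref{lemma:avg1} then produces only an $e_{46}$-sized error with no leading contribution. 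The third piece $\varepsilon \bnabla_x \cdot (\bnabla_x(\bnabla_x \m_0 \h^\varepsilon))$ is uniformly bounded (by (A5) and smoothness of $\h$), so its explicit $\varepsilon$ prefactor combined with \cref{eq:k_inf_bound} of \Cref{lemma:k_boundedness} gives an $O(\varepsilon)$ contribution.

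The hard part is the oscillatory piece $\varepsilon \L \v^\varepsilon = \varepsilon \bnabla \cdot (a^\varepsilon \bnabla \v^\varepsilon)$. My strategy is to integrate by parts twice in $x$, exploiting the compact support of $K_\mu$ inside $\Omega_\mu$, to transfer both derivatives from $\v^\varepsilon$ onto the smooth kernel and obtain
\[
  \bar \K_{\mu,\eta}(\varepsilon \L \v^\varepsilon)
  = \varepsilon \int_0^\eta\!\!\int_{\Omega_\mu} K_\eta^0(t)\,\bigl[\bnabla \cdot (a^\varepsilon \bnabla K_\mu)\bigr]\,\v^\varepsilon\, dx\,dt.
\]
Expanding $\bnabla \cdot (a^\varepsilon \bnabla K_\mu) = a(x/\varepsilon) \Delta K_\mu + \varepsilon^{-1}(\bnabla_y a)(x/\varepsilon) \cdot \bnabla K_\mu$ splits this into two integrals. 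Using the explicit form of $\v$ from \Cref{lemma:v}, I would apply \Cref{lemma:avg_v_base} to each, with $(|\beta_1|,|\beta_2|) = (0,2)$ and $(0,1)$ respectively, obtaining bounds $C\varepsilon\mu^{-2}(\varepsilon^2/\eta)^{q_t+1}$ and $C\mu^{-1}(\varepsilon^2/\eta)^{q_t+1}$; since $\varepsilon<\mu$, both are dominated by $\mu^{-1}(\varepsilon^2/\eta)^{q_t+1}$.

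The delicate point here is the scaling bookkeeping: the outer factor $\varepsilon$ in $\varepsilon \L \v^\varepsilon$ is just enough to cancel the $1/\varepsilon$ produced by $\bnabla a^\varepsilon$, leaving only a $1/\mu$ prefactor (rather than $1/(\mu\varepsilon)$) multiplying the time-averaging gain $(\varepsilon^2/\eta)^{q_t+1}$. Adding the four contributions then yields the bound claimed in the lemma.
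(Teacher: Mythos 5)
Your proposal is correct and follows essentially the same route as the paper: the same splitting of $\L\m^{\mathrm{app}}$ via \cref{eq:u_v_split} and \cref{eq:L_mapp}, \Cref{lemma:avg1} with \cref{eq:avg_g} for the $\g^\varepsilon$ part, the zero-mean observation for $\bnabla_y\h$, \cref{eq:k_inf_bound} for the $O(\varepsilon)$ term, and a double integration by parts onto $K_\mu$ followed by \Cref{lemma:avg_v_base} for the $\varepsilon\L\v^\varepsilon$ term. The only cosmetic difference is in the bookkeeping for the $\bnabla_y a\cdot\bnabla K_\mu$ contribution, where you take $(|\beta_1|,|\beta_2|)=(0,1)$ with $f=\bnabla_y a$ while the paper takes $(1,1)$ with $f=a$; both yield the same $\mu^{-1}(\varepsilon^2/\eta)^{q_t+1}$ bound.
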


\begin{proof}
To begin with, we again split the error under consideration into two parts,
\begin{align*}
 e_{M2} &\le \left|\bar \K_{\mu, \eta} (\L \u^\varepsilon)
- \bnabla \cdot(\bnabla \m_0(0,0) \A^H)\right|
+ |\bar \K_{\mu, \eta} (\varepsilon \L \v^\varepsilon)|
=: II_1 + II_2.
\end{align*}
Based on \cref{eq:L_mapp} one can deduce that
there are constant coefficients $c_{\beta}, d_\beta$ such that
\begin{equation}
  \label{eq:Lu}
  \begin{split}
    \L \u^\varepsilon &= \L (\m_0 + \varepsilon \bnabla \m_0 \boldsymbol \chi^\varepsilon)
    \\&= \bnabla_x\cdot(\bnabla_x \m_0 \g^\varepsilon) + \bnabla_y \cdot ( \bnabla_x (\bnabla_x \m_0 \h^\varepsilon)) + \varepsilon \bnabla_x \cdot (\bnabla_x (\bnabla_x \m_0 \h^\varepsilon)) \\
    &= \sum_{|\beta| = 2} c_{\beta} [\g_\beta + (\bnabla_y \h)_\beta](x/\varepsilon) \partial_x^\beta \m_0(x,t) + \varepsilon
    \sum_{|\beta| = 3} d_\beta \h_\beta(x/\varepsilon) \partial_x^\beta \m_0(x,t).
  \end{split}
\end{equation}
Similar to before, $\g_\beta$ here denotes the component of $\g$ that
multiplies $\partial_x^\beta \m$ and accordingly for the other
quantities.  Hence, it holds that
\begin{align*}
II_1 &\le  \sum_{|\beta| = 2} c_\beta \left|\bar \K_{\mu, \eta} \left(\g_\beta^\varepsilon \partial_x^\beta \m_0\right)
- \A^H_\beta \partial^\beta_x \m_0(0,0)\right|+ \sum_{|\beta| = 2} c_\beta \left|\bar \K_{\mu, \eta} \left((\bnabla_y \h^\varepsilon)_\beta \partial_x^\beta \m_0\right)\right|
\\&\qquad + \varepsilon \sum_{|\beta| = 3} d_\beta \left|\bar \K_{\mu, \eta} \left(\h_\beta^\varepsilon \partial^\beta \m_0\right)\right|.
\end{align*}
Note that since $\h$ is $y$-periodic, the average with respect to
$y$ of the second term on the right-hand side here is zero.  The
averages in the other two sums can be bounded using \cref{eq:I1} and
\cref{eq:k_inf_bound} in \Cref{lemma:k_boundedness} and it follows
in the same way as in the estimate of $e_{M1}$ that
\[II_1  \le _C \left(  \varepsilon + e_{46}(\varepsilon, \mu, \eta) \right). \]
Furthermore, using integration by parts and \Cref{lemma:v}, we can rewrite $II_2$ as
\begin{align*}
  II_2 &= \varepsilon \left|\int_{-\eta}^\eta K^0_\eta(t) \int_{\Omega_\mu} \left(L K_\mu(x)\right) \v^\varepsilon  dx dt\right| \\
      &= \varepsilon \left| \Re\left( \int_{-\eta}^\eta K^0_\eta(t) \int_{\Omega_\mu} \nabla \cdot \left(a(x/\varepsilon) \nabla K_\mu\right)  \f_\v(x, t) \Psi\left(\frac{x}{\varepsilon}, \frac{t}{\varepsilon^2}\right)  dx dt\right)\right|,
\end{align*}
Thus it follows by \Cref{lemma:avg_v_base} with $|\beta_1| \le 1$
and $|\beta_2| = 2 - |\beta_1|$ that
\begin{align*}
   II_2 &\le C
     \left(\frac{\varepsilon}{\mu^2} \|a\|_\infty + \frac{1}{\mu} \|\partial_y a\|_\infty
    \right) \left(\frac{\varepsilon^2}{ \eta }\right)^{q_t + 1}
 \le C \frac{1}{\mu} \left(\frac{\varepsilon^2}{\eta}\right)^{q_t + 1}\,,
\end{align*}
which results in the estimate in \Cref{lemma:M2}.
\end{proof}

\subsection{Approximating $\m_0 \times \bnabla \cdot (\bnabla \m_0 \A^H)$}

We now consider $e_{M3}$, the first contribution to the error bound
for the approximation error in the third model as given in
\cref{eq:app3_split}.  Since we are now considering a nonlinear
expression, the derivations are more involved
than for the previous two models. However, the resulting estimate is
very similar to the one in \Cref{lemma:M2}, as stated in the following.

 \begin{lemma}\label{lemma:M3}
   Under the assumptions in \Cref{thm:hmm_error} it holds that
   \begin{align*}
    |\bar \K_{\mu, \eta} (\m^\mathrm{app} \times \L \m^\mathrm{app})) &- \m_0(0,0) \times \bnabla \cdot (\bnabla \m_0(0,0) \A^H)| \\&\le C
 \left( \left(\frac{\varepsilon}{\mu}\right)^{q_x + 2} + \mu^{p_x+1} + \eta^{p_t + 1} + \frac{1}{\mu}\left(\frac{\varepsilon^2}{\eta}\right)^{q_t + 1}
 + \varepsilon
\right),
   \end{align*}
where the constant $C$ is independent of $\varepsilon, \mu$ and $\eta$.
\end{lemma}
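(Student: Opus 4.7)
The plan is to mimic the decomposition used in \Cref{lemma:M1} and \Cref{lemma:M2} while carefully tracking the additional nonlinearity introduced by the cross product. Using the splitting $\m^\mathrm{app}=\u^\varepsilon+\varepsilon\v^\varepsilon$ from \cref{eq:u_v_split}, I would expand
\[
\m^\mathrm{app}\times\L\m^\mathrm{app} = \u^\varepsilon\times\L\u^\varepsilon + \varepsilon\,\u^\varepsilon\times\L\v^\varepsilon + \varepsilon\,\v^\varepsilon\times\L\u^\varepsilon + \varepsilon^2\,\v^\varepsilon\times\L\v^\varepsilon,
\]
and estimate each of the four averaged pieces separately against the target $\m_0(0,0)\times\bnabla\cdot(\bnabla\m_0(0,0)\A^H)$.

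For the slowly varying leading piece $\bar \K_{\mu,\eta}(\u^\varepsilon\times\L\u^\varepsilon)$, I would proceed exactly as in the $II_1$ estimate of \Cref{lemma:M2}. Inserting the expansion of $\L\u^\varepsilon$ from \cref{eq:Lu} together with $\u^\varepsilon = \m_0+\varepsilon\bnabla\m_0\boldsymbol\chi^\varepsilon$, the dominant contribution $\sum_{|\beta|=2}c_\beta\,\m_0\times[\g_\beta+(\bnabla_y\h)_\beta](x/\varepsilon)\,\partial_x^\beta\m_0$ is a sum of products of smooth slow factors with $y$-periodic factors, and \Cref{lemma:avg1} applies termwise. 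The $\g_\beta$ pieces then sum via \cref{eq:avg_g} to the target expression with error $e_{46}$; the $(\bnabla_y\h)_\beta$ pieces have zero $y$-average by periodicity of $\h$ and contribute another $O(e_{46})$; the remaining pieces carry an explicit $\varepsilon$ with bounded averages by \Cref{lemma:k_boundedness}, producing $O(\varepsilon)$ overall.

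The two linear-in-$\v$ terms will be handled as in the $II_2$ estimate of \Cref{lemma:M2}. For $\varepsilon\,\bar \K_{\mu,\eta}(\u^\varepsilon\times\L\v^\varepsilon)$, I would integrate by parts twice in the spatial divergence to move all derivatives off $\v^\varepsilon$, then substitute $\v = -\Re(\f_\v\Psi)$ from \Cref{lemma:v} and apply \Cref{lemma:avg_v_base} to each of the resulting integrals with multi-indices satisfying $|\beta_1|+|\beta_2|\le 2$; since $\varepsilon<\mu$, the worst combination yields a contribution of order $\mu^{-1}(\varepsilon^2/\eta)^{q_t+1}$. For $\varepsilon\,\bar \K_{\mu,\eta}(\v^\varepsilon\times\L\u^\varepsilon)$ no integration by parts is needed, since by \cref{eq:Lu} the factor $\L\u^\varepsilon$ is already $L^\infty$-bounded with only slow and periodic spatial dependence; a direct application of \Cref{lemma:avg_v_base} with $|\beta_1|=|\beta_2|=0$ yields $O(\varepsilon(\varepsilon^2/\eta)^{q_t+1})$, which is absorbed into the stated bound.

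The main obstacle is the nonlinear term $\varepsilon^2\,\bar \K_{\mu,\eta}(\v^\varepsilon\times\L\v^\varepsilon)$, which is bilinear in the oscillatory factor $\v$ and hence not covered directly by \Cref{lemma:avg_v_base}. My plan is to integrate by parts once in the spatial divergence, writing the result componentwise. For each $i$ the interior contribution $K_\mu(x)\,\partial_{x_i}\v^\varepsilon\times(a^\varepsilon\,\partial_{x_i}\v^\varepsilon)$ vanishes because the two factors are parallel, leaving only the boundary-type piece with a $1/\mu$ from $\partial_{x_i}K_\mu$ and integrand proportional to $a^\varepsilon v^{(l)}\partial_{x_i}v^{(j)}$. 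Using $\v=-\Re(\f_\v\Psi)$ from \Cref{lemma:v} together with $\Re(A)\Re(B)=\tfrac{1}{2}\Re(AB+A\overline{B})$, the product $v^{(l)}\partial_{x_i}v^{(j)}$ decomposes into modes $\Psi\,\partial_{y_i}\Psi$, with combined temporal frequency $e^{(i-\alpha)(\omega_j+\omega_k)\tau}$, and modes $\Psi\,\overline{\partial_{y_i}\Psi}$, whose non-oscillating diagonal $j=k$ contributions are damped by $e^{-2\alpha\omega_j\tau}$. The first class fits a mild extension of \Cref{lemma:avg_v_base}; the second, after time averaging against $K^0_\eta$, contributes $O(\varepsilon^2/\eta)$ by direct integration. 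Combined with the $\varepsilon^2/\mu$ prefactor, the total is controlled by the terms already present in the stated bound, and summing the four pieces completes the proof.
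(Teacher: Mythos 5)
Your decomposition into the four pieces $\u^\varepsilon\times\L\u^\varepsilon$, $\varepsilon\,\u^\varepsilon\times\L\v^\varepsilon$, $\varepsilon\,\v^\varepsilon\times\L\u^\varepsilon$ and $\varepsilon^2\,\v^\varepsilon\times\L\v^\varepsilon$ is exactly the paper's splitting \cref{eq:mxLm_split}, and your treatment of the first three pieces (expansion via \cref{eq:Lu} plus \Cref{lemma:avg1} and \Cref{lemma:k_boundedness}; double integration by parts plus \Cref{lemma:avg_v_base}; direct application of \Cref{lemma:avg_v_base} with $|\beta_1|=|\beta_2|=0$) coincides with the paper's estimates of $III_1$, $III_2$, $III_3$. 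Where you genuinely diverge is the bilinear term $III_4$. The paper splits $\varepsilon^2\L$ into $\L_{yy}+\varepsilon(\L_{xy}+\L_{yx})+\varepsilon^2\L_{xx}$, shows that the $Y$-average of $\v\times\L_{yy}\v$ vanishes by integrating by parts in $y$ over the unit cell (using $a\,\partial_{y_\ell}\v\times\partial_{y_\ell}\v=0$), and then applies \Cref{lemma:avg_space} together with the decay \cref{eq:v_norm_decay}, obtaining $C(\varepsilon+e_{44})$. You instead integrate by parts in $x$, use the same cross-product cancellation for the interior term, and attack the surviving $(\partial_{x_i}K_\mu)\,a^\varepsilon\,\v^\varepsilon\times\partial_{x_i}\v^\varepsilon$ term through the explicit mode expansion of $\Psi$. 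Your route can be made to work, but it buys you nothing over the paper's and is harder to close rigorously.

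Three loose ends in your $III_4$ argument deserve attention. First, the prefactor: $\partial_{x_i}\v^\varepsilon$ carries a chain-rule factor $\varepsilon^{-1}$ on the fast variable, so the relevant prefactor is $\varepsilon/\mu$, not $\varepsilon^2/\mu$; the bound still closes because under the hypotheses $\varepsilon<\mu$ and $\varepsilon<\eta$ one has $(\varepsilon/\mu)(\varepsilon^2/\eta)\le\varepsilon$, but you should say so. Second, you only dispose of the diagonal $j=k$ contributions of the $\Psi\,\overline{\partial_{y_i}\Psi}$ class; the off-diagonal terms oscillate at frequencies $\omega_j-\omega_k$, which have no uniform lower bound, so the integration-by-parts-in-time mechanism of \Cref{lemma:avg_v_base} is unavailable for them and you must fall back on the damping $e^{-\alpha(\omega_j+\omega_k)\tau}$. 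Third, the "mild extension" of \Cref{lemma:avg_v_base} to products of two eigenfunction series is not mild: the Parseval/Cauchy--Schwarz step in that lemma's proof uses orthogonality of the single family $\{\phi_j\}$, which is lost for products $\phi_j\,\partial_{y_i}\phi_k$, so you would need absolute summability of the double series from the smoothness of $\boldsymbol\chi$. All of this can be avoided: since every mode pair carries damping at least $e^{-2\alpha\omega_1\tau}$, the pointwise bound $|\v\times\nabla_y\v|\le Ce^{-2\gamma t/\varepsilon^2}$ from \cref{eq:sob} and \cref{eq:v_norm_decay} followed by the time integral $\int_0^\eta|K^0_\eta|e^{-2\gamma t/\varepsilon^2}\,dt\le C\varepsilon^2/\eta$ already gives $(\varepsilon/\mu)(\varepsilon^2/\eta)\le C\varepsilon$, making the whole mode decomposition unnecessary.
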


\begin{proof}
  For the term under consideration here, \cref{eq:u_v_split} implies
  that the error $e_{M3}$ can be split into four parts,
\begin{equation}
    \label{eq:mxLm_split}
    \begin{split}
    e_{M3} \le &|\bar \K_{\mu, \eta} (\u^\varepsilon \times \L \u^\varepsilon) - \m_0(0,0) \times \bnabla \cdot (\bnabla \m_0(0,0) \A^H) |
 \\ & + \varepsilon |\bar \K_{\mu, \eta} (\u^\varepsilon \times \L \v^\varepsilon)| + \varepsilon |\bar \K_{\mu, \eta} (\v^\varepsilon \times \L \u^\varepsilon)|
 +\varepsilon^2 |\bar \K_{\mu, \eta} (\v^\varepsilon \times \L \v^\varepsilon)| \\
 =&: III_1 + \varepsilon III_2 + \varepsilon III_3 + \varepsilon^2 III_4.
    \end{split}
\end{equation}
Using the sums in \cref{eq:Lu} to express $\L \u^\varepsilon$, we find
\begin{align}\label{eq:uxLu_sum}
  \begin{split}
  \u^\varepsilon \times \L \u^\varepsilon  &= (\m_0 + \varepsilon \sum_{|\gamma| = 1} \boldsymbol \chi_\gamma^\varepsilon \partial^\gamma_x \m_0) \times \L \u^\varepsilon
\\ &= \sum_{\substack{|\gamma| \le 1}} \left(\sum_{|\beta| = 2} c_\beta
 [(\varepsilon \boldsymbol \chi_\gamma)^{|\gamma|} (\g + \bnabla_y \h)_\beta ]\left(x/\varepsilon\right)
 [\partial_x^\gamma \m_0 \times \partial_x^\beta \m_0] (x, t) \right. \\
&\qquad\qquad+\left. \varepsilon \sum_{|\beta| = 3} d_\beta
[(\varepsilon \boldsymbol \chi_\gamma)^{|\gamma|} \h_\beta ]\left(x/\varepsilon\right)
[\partial_x^\gamma \m_0 \times \partial_x^\beta \m_0] (x, t) \right) \\
&=: \sum_{|\beta| = 2} c_\beta
 (\g + \bnabla_y \h)_\beta \left(x/\varepsilon\right)
 [\m_0 \times \partial_x^\beta \m_0] (x, t) + \varepsilon \f_0^\varepsilon(x, t)
 + \varepsilon^2 \f_1^\varepsilon(x, t),
  \end{split}
\end{align}
for some functions $\f_0^\varepsilon, \f_1^\varepsilon \in L^\infty(0, \eta; L^\infty(\Omega))$.
Since $\m_0 \in C^\infty(0, T; H^\infty(\Omega))$, we also have
$\partial_x^\gamma \m_0 \times \partial_x^\beta \m_0 \in C^\infty(0,T; H^\infty(\Omega)),$
and obtain using \Cref{lemma:avg1} that
\begin{equation*}
  |\bar \K_{\mu, \eta} ( [\m_0 \times \partial^\beta_x \m_0](x, t) \g_{\beta}(x/\varepsilon)) -  [\m_0 \times \partial^\beta_x \m_0](0, 0) \A^H_{\beta}
| \le C e_{46}(\varepsilon, \mu, \eta).
\end{equation*}
Furthermore, as $\h$ is $y$-periodic, the average of
$\partial^\nu_y \h_\beta$ for $|\nu| = 1$ is zero, hence
\begin{align*}
    \left|\bar \K_{\mu, \eta}  ( [ \m_0 \,\times \partial^\beta_x \m_0](x, t) \partial_y^\nu \h_\beta](x/\varepsilon))\right|
    \le  C e_{46}(\varepsilon, \mu, \eta).
\end{align*}
The remaining terms involved in \cref{eq:uxLu_sum} can be bounded
using \cref{eq:k_inf_bound} in \Cref{lemma:k_boundedness},
\[|\bar \K_{\mu, \eta} \f_0^\varepsilon | \le C, \qquad |\bar \K_{\mu, \eta} \f_1^\varepsilon|\le C.\]
In total, we can thus estimate $III_1$ as
\begin{align}\label{eq:nonlinear_est1}
  III_1 &= \left|\bar \K_{\mu, \eta} (\u^\varepsilon \times \L \u^\varepsilon)
  - \m_0(0,0) \times \bnabla \cdot (\bnabla \m_0(0,0) \A^H)  \right| \\ &\le C
\left(\varepsilon + e_{46}(\varepsilon, \mu, \eta)\right) \nonumber
\,.
\end{align}

The next two terms in \cref{eq:mxLm_split} can be bounded using
\Cref{lemma:avg_v_base}. Consider first $\varepsilon III_2$. Using
integration by parts as in the previous sections to obtain $\v$
without any spatial derivatives yields
\begin{align*}
   \int_{\Omega_\mu}  K_\mu \u^\varepsilon \times \L \v^\varepsilon dx
  &= \int_{\Omega_\mu} \left[\bnabla  \cdot (a^\varepsilon \bnabla (K_\mu \m_0))
+ \varepsilon  \bnabla \cdot  (a^\varepsilon \bnabla (K_\mu \bnabla \m_0 \boldsymbol \chi^\varepsilon))\right] \times \v^\varepsilon dx .
\end{align*}
Let now $\beta = \beta_1 + \beta_2 + \beta_3$ a multi-index. Using
\Cref{lemma:v}, we can rewrite
\begin{align*}
  &\bnabla  \cdot (a^\varepsilon \bnabla (K_\mu \m_0)) \times \v^\varepsilon
  \\ &\hspace{1cm}= - \Re\left(\sum_{|\beta_1| \le 1, |\beta| = 2} c_\beta \partial_x^{\beta_1} a(x/\varepsilon) \partial_x^{\beta_2} K_\mu(x) [\partial_x^{\beta_3} \m_0 \times \f_v](x, t) \Psi(x/\varepsilon, t/\varepsilon^2)\right)
\end{align*}
for some coefficients $c_{\beta}$ that might also be zero.  It
therefore follows by \Cref{lemma:avg_v_base}, the regularity
assumption (A5) and the fact that $\varepsilon < \mu$, that
\begin{align*}
  \varepsilon \int_{0}^\eta K_\eta  &\int_{\Omega_\mu} \bnabla  \cdot (a^\varepsilon \bnabla (K_\mu \m_0 )) \times \v^\varepsilon dx dt \\
  &\le C
    \sum_{\substack{|\beta_1| \le 1, \\|\beta_2| \le 2 - |\beta_1|}} \frac{\varepsilon}{\mu^{|\beta_2|}\varepsilon^{|\beta_1|}} \|\partial_y^{\beta_1} a\|_{L^\infty}  \left(\frac{\varepsilon^2}{ \eta} \right)^{q_t + 1}
  \le C \frac{1}{\mu} \left(\frac{\varepsilon^2}{\eta}\right)^{q_t + 1}.
  \end{align*}
Similarly, it holds
with $\gamma = \gamma_1 + \gamma_2 + \gamma_3 + \gamma_4$ that
that
\begin{align*}
  &\bnabla  \cdot (a^\varepsilon \bnabla (K_\mu \bnabla \m_0 \boldsymbol \chi^\varepsilon)) \times \v^\varepsilon
  \\ &\hspace{.1cm}= -\Re\left(\sum_{\substack{|\gamma_1| \le 1 \\ |\gamma| \le 2}} \sum_{|\nu| = 1} c_\gamma [\partial_x^{\gamma_1} a \partial_x^{\gamma_4} \boldsymbol \chi_\gamma] (x/\varepsilon) \partial_x^{\gamma_2} K_\mu(x) [\partial_x^{\gamma_3} \partial_x^\nu \m_0 \times \f_v](x, t) \Psi(x/\varepsilon, t/\varepsilon^2)\right)
\end{align*}
Application of \Cref{lemma:avg_v_base}, with $|\beta_1| = 0$ and
$f(y) = \varepsilon^{-|\gamma_1| - |\gamma_4|}
[\partial_y^{\gamma_1} a \partial_y^{\gamma_4} \boldsymbol
\chi_\gamma](y)$,
then yields
\begin{align*}
  \varepsilon^2 &\int_{0}^\eta K_\eta \int_{\Omega_\mu} \bnabla  \cdot (a^\varepsilon \bnabla (K_\mu \m_0)) \times \v^\varepsilon dx dt \\
      &\le C \sum_{\substack{|\gamma_1| \le 1, \\ |\gamma_2| + |\gamma_4| \le 2 - |\gamma_1|}}
    \left(\frac{\varepsilon^2}{\varepsilon^{|\gamma_1| + |\gamma_4|} \mu^{|\gamma_2|}} \|\partial_y^{\gamma_1} a \partial_y^{\gamma_4} \boldsymbol \chi\|_{L^\infty}
  \right) \left(\frac{\varepsilon^2}{ \eta}\right)^{q_t + 1}
  \le C \left(\frac{\varepsilon^2}{\eta}\right)^{q_t + 1}\,.
  \end{align*}
By (A1) and since $\varepsilon/\mu < 1$, we thus have in total
\begin{align}\label{eq:nonlinear_est2}
  \varepsilon III_2 &= \left| \bar \K_{\mu, \eta} (\varepsilon \u^\varepsilon \times \L \v^\varepsilon) \right| \le
C \frac{1}{\mu} \left(\frac{\varepsilon^2}{ \eta }\right)^{q_t + 1}.
\end{align}

The next term in \cref{eq:mxLm_split}, $\varepsilon III_3$,
 can be treated similarly. Expressing
$\L \u^\varepsilon$ as given in \cref{eq:Lu} and using \Cref{lemma:v},
we get
\begin{align*}
  \L\u^\varepsilon \times \v^\varepsilon
      &=  - \Re \Big( \sum_{|\beta| = 2} c_\beta [\g_\beta + (\bnabla_y \h)_\beta](x/\varepsilon) [\partial_x^\beta \m_0 \times \f_v](x,t) \Psi \left({x}/{\varepsilon}, {t}/{\varepsilon^2}\right)
\\ &\hspace{1.3cm}+ \varepsilon \sum_{|\beta| = 3} d_\beta  \h_\beta(x/\varepsilon) \left[\partial_x^\beta \m_0 \times \f_v\right](x, t) \Psi\left({x}/{\varepsilon}, {t}/{\varepsilon^2}\right) \Big).
\end{align*}
Application of \Cref{lemma:avg_v_base} thus yields that
\begin{align}\label{eq:nonlinear_est3}
   \varepsilon |III_3| &=
            \left|\int_{0}^\eta \int_{\Omega_\mu} K^0_\eta K_\mu \right.(\L\u^\varepsilon \times \v^\varepsilon) dx dt\Bigg|
\\ &\le C
     \left(\|\g\|_{L^\infty} + \|\bnabla_y \h\|_{L^\infty} + \varepsilon \|\h\|_{L^\infty}\right) 
     \varepsilon
\left(\frac{\varepsilon^2}{ \eta }\right)^{q_t + 1}.
     \nonumber
\end{align}

Finally, consider the last term in \cref{eq:mxLm_split},
$\v^\varepsilon \times \L \v^\varepsilon$, which can be bounded using \Cref{lemma:avg_space}
and the fact that $\v^\varepsilon$ decays exponentially in time.  As we mostly
consider spatial averaging only, we in the following suppress the
time dependence of $\v$ for matters of brevity and write $\v(x, y)$
instead of $\v(x, y, t, \tau)$.  Furthermore, we use the notation
\begin{align}\label{eq:L_xy}
  \L_{ab} \v := \bnabla_a \cdot (a(y) \bnabla_b \v).
\end{align}
Then we can write
\begin{align*}
  \varepsilon^2 \v^\varepsilon \times \L \v^\varepsilon &= \varepsilon^2 \v^\varepsilon \times \left[\L_{xx}  + {\varepsilon^{-1}} (\L_{xy} + \L_{yx} ) + \varepsilon^{-2} \L_{yy}\right] \v^\varepsilon \\
  &=: \v^\varepsilon \times \L_{yy} \v^\varepsilon + \varepsilon \hat \f_0^\varepsilon + \varepsilon^2 \hat \f_1^\varepsilon
  \,,
\end{align*}
Consider first the average of $\v \times \L_{yy} \v$. Using integration by
parts, we find that for any coordinate direction $\ell$,
\begin{align*}
  \int_0^1 \v \times \partial_{y_\ell} (a \partial_{y_\ell} \v) dy_\ell =
 - \int_0^1 a \partial_{y_\ell} \v \times  \partial_{y_\ell} \v dy_\ell = 0\,,
\end{align*}
which implies that the average of $\v \times \L_{yy} \v$ with
respect to $y$ is zero.
Moreover, we obtain using \cref{eq:sob} and \cref{eq:v_norm_decay} that
\begin{align*}
  \sup_{y \in Y} \|\v(\cdot, y) \times \L_{yy} \v(\cdot, y) \|_{W^{p_x+1, \infty}}   
  &\le C  \|\v\|_{H^{p_x+3, 2}}  \|\v\|_{H^{p_x+3, 4}}  \le C e^{- 2\gamma t/\varepsilon^2}.
\end{align*}
%
As $\v$ is periodic in $y$, spatial averaging according to
\Cref{lemma:avg_space} thus  yields
\begin{align*}
  \left|\int_{\Omega_\mu} K_\mu (\v^\varepsilon \times \L_{yy} \v^\varepsilon) dx \right|
  \le  C  e_{44}(\varepsilon, \mu) e^{-2 \gamma t/\varepsilon^2}.
\end{align*}
The averages of $\hat \f_0^\varepsilon$ and $\hat \f_1^\varepsilon$ can be bounded using
\cref{eq:k_inf_bound} in \Cref{lemma:k_boundedness}.
Consequently, it holds for $\varepsilon^2 III_4$ that
\begin{align}\label{eq:nonlinear_est4}
  \varepsilon^2 III_4 &\le \left| \int_0^\eta \int_{\Omega_\mu} K^0_\eta K_\mu (\v^\varepsilon \times \L_{yy} \v^\varepsilon) dx dt \right|
                        + \varepsilon \left|\bar \K_{\mu, \eta} \hat \f_0^\varepsilon\right|
                        + \varepsilon^2 \left|\bar \K_{\mu, \eta} \hat \f_1^\varepsilon\right|
  \\& \le C \left(\varepsilon+ \varepsilon^{2} +  e_{44}(\varepsilon, \mu)  \int_0^\eta \left|K^0_\eta\right|  e^{-2 \gamma t/\varepsilon^2}  dt \right) \le C \left(\varepsilon + e_{44}(\varepsilon, \mu)\right).
      \nonumber
\end{align}
Based on \cref{eq:mxLm_split} together with
\cref{eq:nonlinear_est1}, \cref{eq:nonlinear_est2},
\cref{eq:nonlinear_est3} and \cref{eq:nonlinear_est4}, we thus
obtain the estimate in \Cref{lemma:M3}.
\end{proof}

\subsection{Remainder terms}\label{sec:remainder}

In this section, we aim to bound the remainder errors
$r_{M1}, r_{M2}$ and $r_{M3}$ that are introduced in the averaging
processes of the models $(M1)-(M3)$ when approximating
$\m^\varepsilon$ by $\m^\mathrm{app}$ as shown in
\cref{eq:app1_split} - \cref{eq:app3_split}.  To be able to
formulate common results for these remainder terms, we introduce
four different linear operators,
\begin{align*}
  \AA_1 \u :&= a^\varepsilon \bnabla \u, &\qquad \AA_{2} \u :&= \L \u, \\
  \AA_3 \u :&= \m^\mathrm{app} \times \L \u, &\qquad   \AA_3 \u :&= \u \times  \L \m^\mathrm{app},
\end{align*}
where $\u \in H^2(\Omega)$ and
$\m^\mathrm{app} := \m_0 + \varepsilon \m_1^\varepsilon$ as in the previous
section. In terms of these operators, the remainder terms are
\begin{align*}
  r_{M1} &= \left|\bar \K_{\mu, \eta}( \AA_1 (\m^\varepsilon - \m^\mathrm{app}))\right|, \\
  r_{M2} &= \left|\bar \K_{\mu, \eta}( \AA_2 (\m^\varepsilon - \m^\mathrm{app}))\right|, \\
  r_{M3} &= \left|\bar \K_{\mu, \eta}( \AA_3 (\m^\varepsilon - \m^\mathrm{app})
           + \AA_4 (\m^\varepsilon - \m^\mathrm{app})\right|.
\end{align*}
To bound the remainder errors, we first consider the approximation
$\tilde \m_J^\varepsilon$ as given by \cref{eq:asymptotic_expansion}
and let $\e_J := \m^\varepsilon - \tilde \m_J^\varepsilon$. Note
that by definition, $\m^\mathrm{app} = \tilde \m_1^\varepsilon$. It thus holds that
\begin{equation}\label{eq:m_e_connection}
  \m^\varepsilon - \m^\mathrm{app} = \e_J + \sum_{j=2}^J \varepsilon^j \m_j^\varepsilon\,,
\end{equation}
for some constant $J > 1$, where
$\m_j^\varepsilon(x,t) := \m_j(x, x/\varepsilon, t, t/\varepsilon^2)$. As
explained in \Cref{sec:hom}, bounds for both $H^q$-norms of $\e_J$
as well as for the norms of $\m_j^\varepsilon$ have been proved in \cite{paper1}
and can be used to obtain estimates here.
It is important to notice that 
the correctors
$\m_j(x, y, t, \tau)$, $j > 0$, are periodic in $y$. We can
therefore apply \Cref{lemma:avg_space} to averages of these terms
and their derivatives. The error $\e_J$, on the other hand, can in
general not be assumed to be periodic and therefore has to be
treated differently.

According to \cref{eq:k_l2_bound} in \Cref{lemma:k_boundedness}, it
holds for the averages of the linear operators $\AA_k$,
$k = 1, ..., 4$ applied to $\e_J$ that
\begin{align}\label{eq:Ke_start}
  \left| \bar \K_{\mu, \eta} (\AA_k \e_J) \right| \le \sup_{0 \le t \le \eta} \frac{1}{\mu^{d/2}} \|\AA_k
  \e_J(\cdot, t)\|_{L^2} \le
  \sup_{0 \le t \le \eta} \frac{1}{\varepsilon^{d/2}} \|\AA_k
  \e_J(\cdot, t)\|_{L^2}.
\end{align}
Using the fact that by \cref{eq:tilde_mJ_infty}, for $0 \le t \le T^\varepsilon = \varepsilon^\sigma T$
with $1 < \sigma \le 2$,
\[\|\m^\mathrm{app}\|_{W^{q, \infty}} = \|\tilde \m_1^\varepsilon\|_{W^{q, \infty}} \le C \varepsilon, \qquad q \ge 1\,,\]
and the bound \cref{eq:error_mJbest} for $\|\e_J\|_{H^q}$,
the $L^2$-norms on the right-hand
side in \cref{eq:Ke_start} can be bounded for
$0 \le t \le T^\varepsilon$ as follows,
\begin{align*}
  \|\AA_1 \e_J(\cdot, t)\|_{L^2} &\le C \|\e_J(\cdot, t)\|_{H^1} \le C \varepsilon^{\sigma_J+1}, \\
  \|\AA_2 \e_J(\cdot, t)\|_{L^2} &\le C \left(\varepsilon^{-1} \|\e_J(\cdot, t)\|_{H^1} +  \|\e_J(\cdot, t)\|_{H^2}\right) \le C \varepsilon^{\sigma_J}, \\
  \|\AA_3 \e_J(\cdot, t)\|_{L^2} &\le C \|\m^\mathrm{app}\|_{W^{1, \infty}} \left(\varepsilon^{-1} \|\e_J(\cdot, t)\|_{H^1} +  \|\e_J(\cdot, t)\|_{H^2}\right) \le C \varepsilon^{\sigma_J}, \\
  \|\AA_4 \e_J(\cdot, t)\|_{L^2} &\le C \|\e_J(\cdot, t)\|_{L^2}\left(\varepsilon^{-1} \|\m^\mathrm{app}\|_{W^{1, \infty}} + \|\m^\mathrm{app}\|_{W^{2, \infty}}\right) \le C \varepsilon^{\sigma_J+1},
\end{align*}
where $\sigma_J := (\sigma-1)(J-1)$. We now choose
$J \ge (1 + d/2)/(\sigma - 1) + 1$. Then $\sigma_J \ge 1 + d/2$ and
since $\eta \le T^\varepsilon$, it follows together with
\cref{eq:Ke_start} that
\begin{align}\label{eq:Ke}
  \left|\bar \K_{\mu, \eta} (\AA_k \e_J) \right| \le C \varepsilon, \qquad k = 1, ..., 4.
\end{align}
This provides a bound for the first part of the remainder errors. To
also bound the second part, we use the $L^\infty$-boundedness of
$\bar \K_{\mu, \eta}$ as well as the periodicity of the correctors $\m_j$.
To simplify these considerations, we split the operator $\AA_3$,
\begin{align*}
  \AA_3 \u = \m_0 \times \L \u + \varepsilon \m_1 \times \L \u =: \AA_{31} \u + \AA_{32} \u\,.
\end{align*}
Note that by \cref{eq:mj_norm_bound}, we have
for the considered $T^\varepsilon$ and any $p, q > 0$ 
\begin{align}\label{eq:mj_norm_short}
  \sup_{0 \le t \le T^\varepsilon} \|\m_j(\cdot, \cdot, t, t/\varepsilon^2)\|_{H^{p, q}} \le
  C \varepsilon^{\min(0, 2-j)}, \qquad j \ge 1.
\end{align}
Together with Lemma 5.1 in \cite{paper1}, this implies that
\begin{align*}
  \sup_{0 \le t \le T^\varepsilon} \|\m_j^\varepsilon(\cdot, t)\|_{W^{q, \infty}}
  &\le
  \sup_{0 \le t \le T^\varepsilon} C \varepsilon^{-q} \|\m_j(\cdot, \cdot,  t, t/\varepsilon^2)\|_{H^{q+2, q+2}}
  \le C \varepsilon^{\min(0, 2-j) - q}.
\end{align*}
It therefore follows by \cref{eq:k_inf_bound} in \Cref{lemma:k_boundedness}
that for $j \ge 2$ and $0 \le t \le T^\varepsilon$,
\begin{align*}
  \left| \bar \K_{\mu, \eta} \AA_1 \m_j^\varepsilon\right|
  &\le C \|\AA_1 \m_j^\varepsilon\|_{L^\infty} \le
    C \|\m_j^\varepsilon\|_{W^{1, \infty}} \le C \varepsilon^{1-j}, \\
  \left| \bar \K_{\mu, \eta} \AA_{32} \m_j^\varepsilon\right|
  &\le C \|\AA_{32} \m_j^\varepsilon\|_{L^\infty}
    \le
    C\varepsilon \|\m_1^\varepsilon\|_{L^{\infty}}
    \left(\varepsilon^{-1} \|\m_j^\varepsilon\|_{W^{1, \infty}} + \|\m_j^\varepsilon\|_{W^{2, \infty}}\right)
    \le C \varepsilon^{1-j}, \\
 \left| \bar \K_{\mu, \eta} \AA_4 \m_j^\varepsilon\right|
  &\le C \|\AA_4 \m_j^\varepsilon\|_{L^\infty} \\ &\le
    C \|\m_j^\varepsilon\|_{L^{\infty}}
    \left(\varepsilon^{-1} \|\m^\mathrm{app}\|_{W^{1, \infty}} + \|\m^\mathrm{app}\|_{W^{2, \infty}}\right)
    \le C \varepsilon^{1-j},
\end{align*}
To  bound the remaining terms,
$ |\bar \K_{\mu, \eta} \AA_2 \m_j|$ and
$ |\bar \K_{\mu, \eta} \AA_{31} \m_j|$, we furthermore have to
exploit the periodicity of the correctors. We proceed in a similar
way as for $III_4$ in the previous section, applying
\Cref{lemma:avg_space} and again using the notation given in
\cref{eq:L_xy}. Suppressing time dependence, let
\[\f_j(x, y) := \left(\L_{xx} + \varepsilon^{-1} (L_{xy} + L_{yx}) + \varepsilon^{-2} \L_{yy}\right)
  \m_j(x, y)\]
As the $Y$-average of $\L_{yy} \m_j$ is zero, we
then find using \cref{eq:mj_norm_short} that for $j \ge 2$,
\begin{align*}
  \left| \int_Y \f_j(0, y) dy \right|
  \le C \left( \|\m_j\|_{H^{4, 0}} + \varepsilon^{-1} \|\m_j\|_{H^{3, 1}}\right) \le
  C \varepsilon^{1-j}
\end{align*}
and
\begin{align*}
\nonumber  &\sup_{y \in Y} \|\f_j(\cdot, y)\|_{W^{p_x+1, \infty}} \\ &\hspace{1cm}\le
  C  \left( \|\m_j\|_{H^{p_x+5, 2}} + \varepsilon^{-1} \|\m_j\|_{H^{p_x+4, 3}}
  + \varepsilon^{-2}  \|\m_j\|_{H^{p_x+3, 4}} \right)
  \le C \varepsilon^{-j}.
\end{align*}
Hence, the spatial average of $\AA_2 \m_j$ is according to
\Cref{lemma:avg_space} bounded as follows,
\begin{align*}
  \int_{\Omega_\mu} K_\mu(x) \AA_2 \m_j(x, x/\varepsilon) dx
  &\le \left| \int_Y \f_j(0, y) dy \right| + C \sup_{y \in Y} \|\f_j\|_{W^{p_x + 1, \infty}}
    e_{44}(\varepsilon, \mu)
    \\ &\le C \varepsilon^{-j} \left( \varepsilon + e_{44}(\varepsilon, \mu) \right),
\end{align*}
and similarly,
\begin{align*}
  \int_{\Omega_\mu} K_\mu \AA_{31} \m_j dx
  &\le \left| \m_0(0) \times \int_Y \f_j(0, y) dy \right| + C \sup_{y \in Y} \|\m_0 \times \f_j\|_{W^{p_x + 1, \infty}}
    e_{44}(\varepsilon, \mu)
    \\ &\le C \varepsilon^{-j} \left( \varepsilon + e_{44}(\varepsilon, \mu) \right).
\end{align*}
We can therefore conclude that for $\ell \in \{2, 31\}$,
\begin{align*}
  \left|\bar \K_{ \mu, \eta} \AA_\ell \m_j\right| \le \int_0^\eta |K^0_\eta| \left|\int_{\Omega_\mu}
  K_\mu \AA_\ell \m_j dx \right| dt
  \le C \varepsilon^{-j} \left( \varepsilon + e_{44}(\varepsilon, \mu) \right).
\end{align*}
Overall, we finally obtain that when choosing $J \ge (1 + d/2)/(\sigma-1) + 1$,
\begin{align*}
  r_{M_1} &\le \left| \bar \K_{\mu, \eta} \AA_1 \e_J \right| + \sum_{j=2}^J \varepsilon^j \left| \bar \K_{\mu, \eta} \AA_1 \m_j \right|
  \le C \varepsilon, \\
    r_{M_2} &\le \left| \bar \K_{\mu, \eta} \AA_2 \e_J \right| + \sum_{j=2}^J \varepsilon^j \left| \bar \K_{\mu, \eta} \AA_2 \m_j \right|
              \le C \varepsilon +  e_{44}(\varepsilon, \mu),\\
  r_{M_3} &\le \left| \bar \K_{\mu, \eta} (\AA_3 + \AA_4) \e_J \right|  + \sum_{j=2}^J \varepsilon^j \left| \bar \K_{\mu, \eta} (\AA_{31} + \AA_{32} + \AA_{4}) \m_j \right|
                \le C \varepsilon +  e_{44}(\varepsilon, \mu).
\end{align*}
This completes the proof of \Cref{thm:hmm_error}.

\section{Numerical results}\label{sec:num}
In this section, we present numerical examples showing the
convergence of the averaged flux, field and torque as specified in
the models (M1) - (M3) to the corresponding homogenized quantities
in both one and two space dimensions. We provide evidence for the
estimates given in \Cref{thm:hmm_error} and show which of the terms
appearing there seem to be dominating in practice.

\subsection{One-dimensional examples}

In one space dimension, we consider the periodic material coefficient
\begin{align}\label{eq:a_eps}
  a^\varepsilon(x) = a(x/\varepsilon), \qquad \text{where} \qquad a(y) =  1 + 0.5 \sin(2 \pi y) + 0.5 \sin(4 \pi y)\,.
\end{align}
The corresponding homogenized coefficient, given by
\begin{align*}
  a^H = \left(\int_0^1 (a(y))^{-1} dy \right)^{-1}\,,
\end{align*}
is computed numerically.
\begin{figure}[h!]
  \centering
  \includegraphics[width=.45\textwidth]{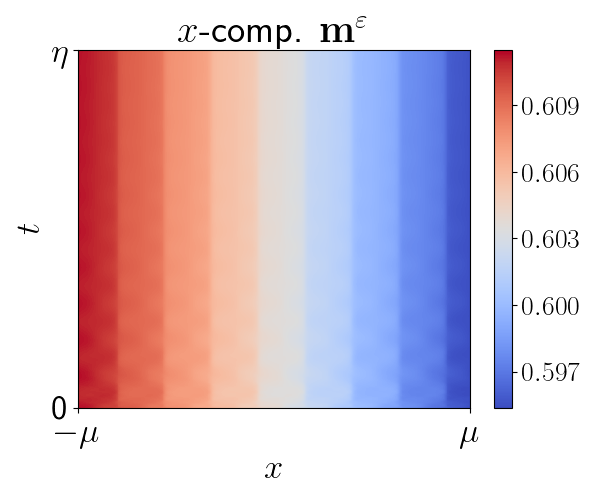}~
  \includegraphics[width=.45\textwidth]{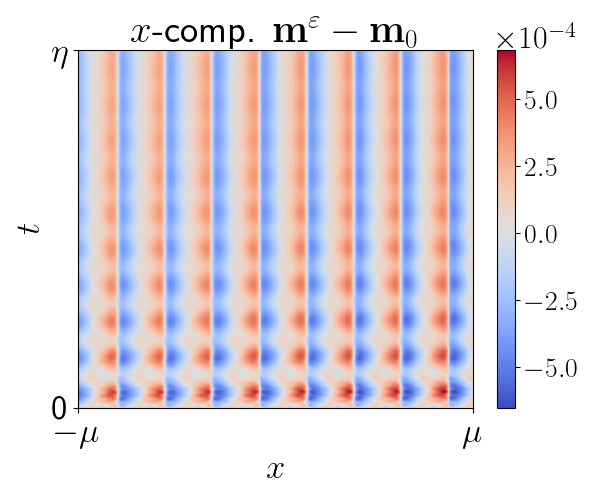}
  \caption{Left: Development of $x$-component of solution
    $\m^\varepsilon$ to \cref{eq:micro_prob} with $a^\varepsilon$
    given by \cref{eq:a_eps} in space and time when
    $\varepsilon = 1/140$ and $\alpha = 0.05$. Spatial domain
    $[-\mu, \mu]$ where $\mu = 0.03$, time interval $t \in [0, \eta]$ with $\eta = 1.5\cdot10^{-4}$.
    Right: difference between that solution $\m^\varepsilon$ and
    corresponding homogenized solution $\m_0$. }
  \label{fig:1d_sol}
\end{figure}
To create a better understanding of the
problem, \cref{fig:1d_sol} shows the $x$-component of the
solution $\m^\varepsilon$ to an example problem in time and space as
well as the difference between $\m^\varepsilon$ and the solution to
the corresponding homogenized equation, $\m_0$.  One can observe
that $\m^\varepsilon$ oscillates in both time and space initially,
but as $t$ increases the temporal oscillations are damped away and
only the spatial ones remain. The oscillations have significantly
smaller magnitude than the solution and appear to have zero average.

We then consider the approximation errors for the three models (M1) - (M3)
and compare the observed behavior with the theoretical bound according to
\Cref{thm:hmm_error},
\begin{align}\label{eq:err_est_summary}
  E_{Mi}  \le C \left(\varepsilon + \left(\frac{\varepsilon}{\mu}\right)^{q_x + 2} + \mu^{p_x + 1} + \eta^{p_t + 1} + \frac{1}{\mu^{\delta_{i}}} \left(\frac{\varepsilon^2}{\eta}\right)^{q_t+1} \right), \quad i = 1, 2, 3,
\end{align}
where $\delta_i = 0, i = 1$ and $\delta_i = 1, i = 2, 3$.  In
\cref{fig:1d_comp}, the approximation errors for varying
$\varepsilon$ is shown.  In \cref{fig:1d_comp_001}, the damping
constant is set to $\alpha = 0.01$ and in \cref{fig:1d_comp_01} we
have $\alpha = 0.1$. For all three models, the errors initially
decrease rapidly with $\varepsilon$. In this regime, the error
appears to be dominated by the $\varepsilon^2/\eta$ term in
\cref{eq:err_est_summary}. For smaller $\varepsilon$, the errors are
proportional to $\varepsilon^2$. Note that this is smaller than the
convergence rate of $\varepsilon$ suggested by
\cref{eq:err_est_summary}. In (M1) the error is somewhat lower than for
(M2) and (M3). The latter two models result in very similar error
behavior.
\begin{figure}[h!]
  \centering
    \begin{subfigure}[b]{\textwidth}
      \centering
      \includegraphics[width=\textwidth]{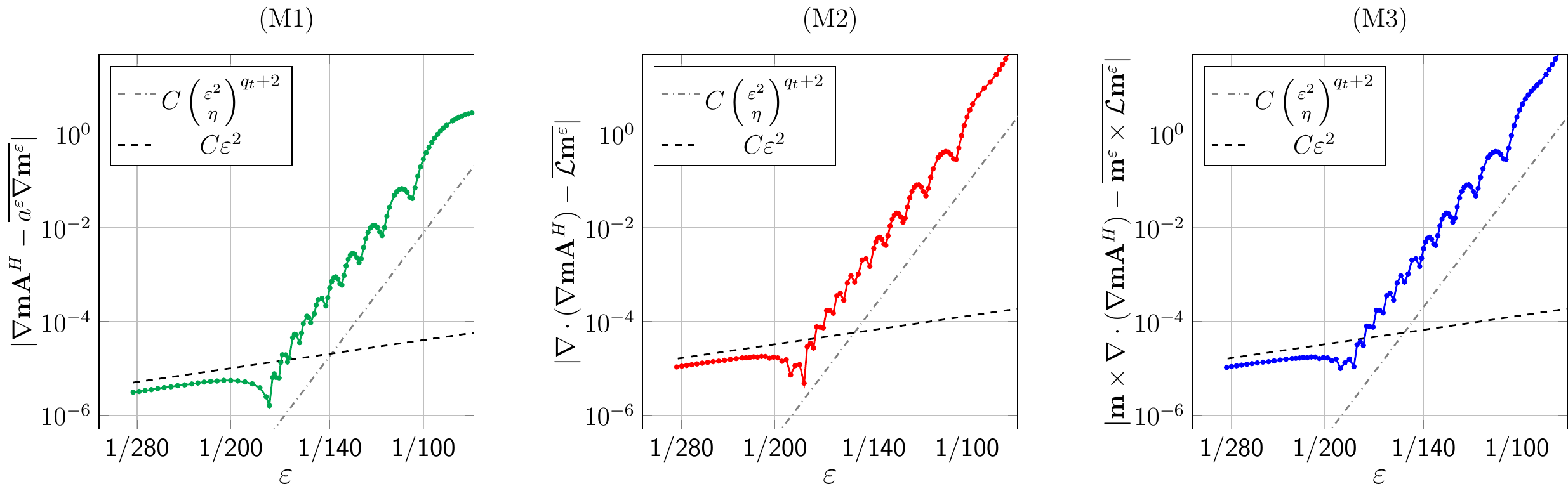}
      \caption{Low damping, $\alpha = 0.01$}
      \label{fig:1d_comp_001}
    \end{subfigure}

    \begin{subfigure}[b]{\textwidth}
      \centering
       \includegraphics[width=\textwidth]{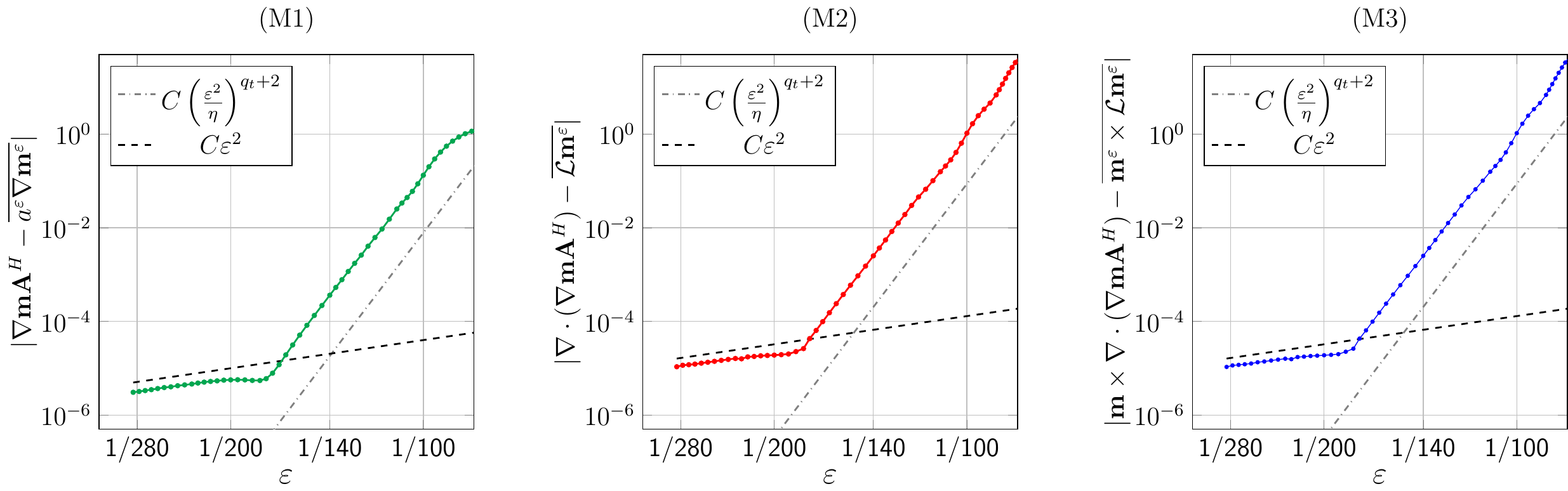}
      \caption{Higher damping, $\alpha = 0.1$}
      \label{fig:1d_comp_01}
    \end{subfigure}

    \caption{Approximation errors in (M1)-(M3) for varying
      $\varepsilon$ with kernel parameters
      $p_x = 5, q_x = 7, p_t = 5, q_t = 7$ and averaging domain
      sizes $\eta = 1.5\cdot10^{-4}$ and $\mu = 0.03$.}
  \label{fig:1d_comp}
\end{figure}

When comparing \cref{fig:1d_comp_001} and \cref{fig:1d_comp_01}, one
can moreover observe that choosing a lower damping parameter
$\alpha$ results in more oscillatory errors. However, the overall
error behavior is very similar for both $\alpha$-values.  This is a
property that holds for all the examples considered here.
We therefore choose to show plots for higher values of $\alpha$ in
several of the subsequent examples to reduce oscillations and make it
easier to distinguish the different curves.

Next, we consider the influence of the kernel parameters $q_x$ and
$q_t$ on the error decay. As shown in \Cref{fig:1d_qx_qt}, the
choice of $q_x$ does not influence the error behavior, while
different values of $q_t$ result in different slopes of the initial
error decay. This again shows that the error from time averaging initially
dominates in \Cref{fig:1d_comp} and \Cref{fig:1d_qx_qt}. In
particular, it is proportional to $(\varepsilon^2/\eta)^{q_t+2}$ until
it reaches the $\varepsilon^2$ threshold. This is slightly better
than $(\varepsilon^2/\eta)^{q_t+1}$ as given in \Cref{thm:hmm_error}.
\begin{figure}[h!]
  \centering
  \includegraphics[width=.85\textwidth]{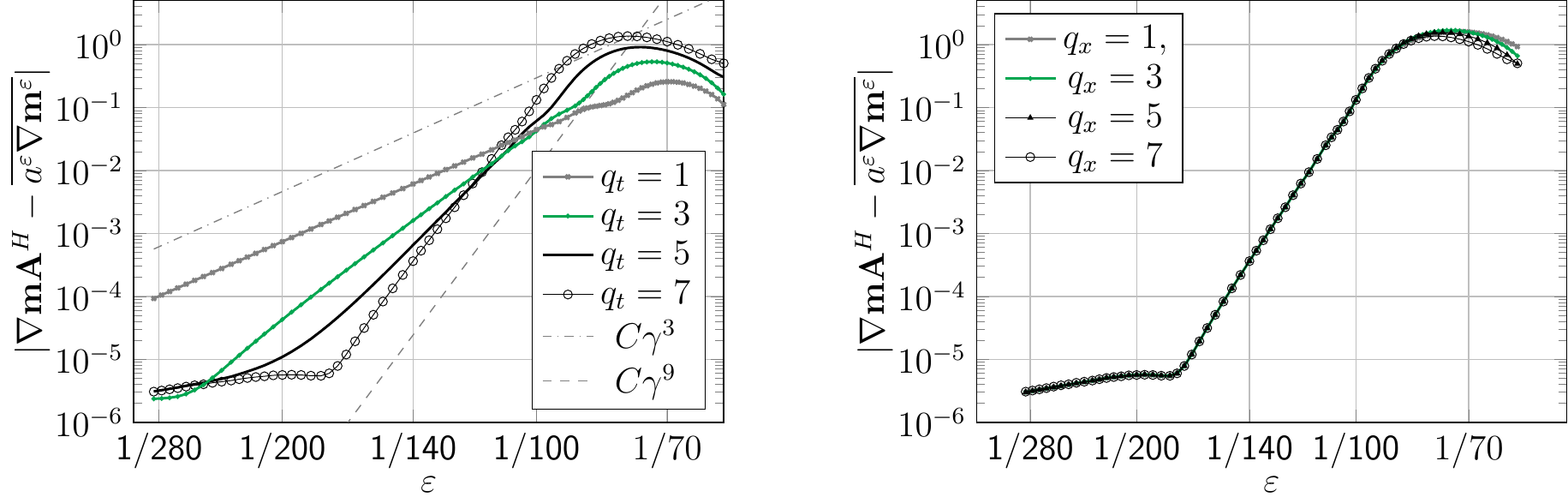}
  \caption{Approximation error for (M1) for varying $\varepsilon$
    with $\alpha = 0.1$ and kernel parameters
    $p_x = 5, q_x = 7, p_t = 5$ and $q_t = 7$, if not explicitly
    stated otherwise in the plot. Averaging domain sizes
    $\eta = 1.5\cdot10^{-4}$ and $\mu = 0.03$. For reference lines,
    $\gamma := \varepsilon^2/\eta$.}
  \label{fig:1d_qx_qt}
\end{figure}

We continue by examining the influence of the parameters $p_x$ and
$p_t$ and the contribution of the terms $C \mu^{p_x + 1}$ and
$C \eta^{p_t + 1}$ to the error. As shown in \cref{fig:1d_px_pt}, we
find that low choices of both $p_x$ and $p_t$ result in a constant
error for low $\varepsilon$, corresponding to $C \mu^{p_x + 1}$ or
$C \eta^{p_t + 1}$, respectively. For larger $p_x$ and $p_t$, these
terms are presumably smaller than $\varepsilon^2$, in the range
considered.
One can furthermore observe that
the choice of $p_x$ does not seem to influence the error otherwise,
while the initial convergence happens at different $\varepsilon$-values
when varying $p_t$.
\begin{figure}[h!]
  \centering
  \includegraphics[width=.85\textwidth]{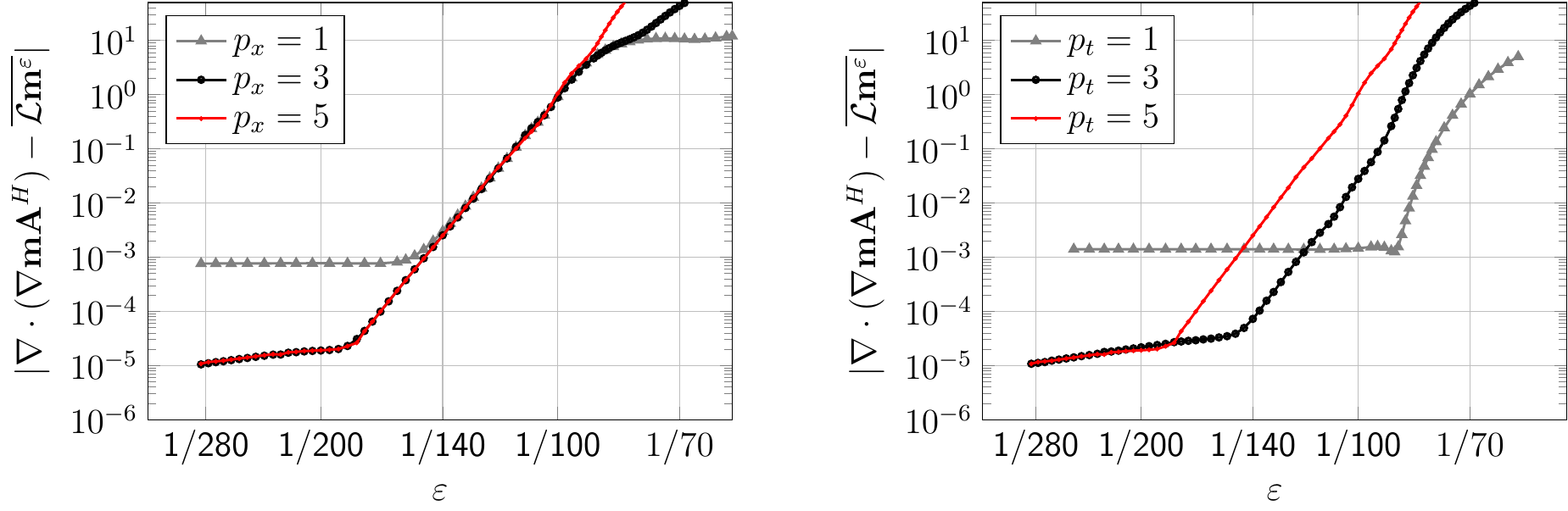}
  \caption{Approximation error for the model (M2) for varying
    $\varepsilon$ with kernel parameters $p_x = 5, q_x = 7, p_t = 5$
    and $q_t = 7$, if not explicitly stated otherwise in the
    plot. Averaging domain sizes $\eta = 1.5\cdot10^{-4}$ and
    $\mu = 0.03$ and damping $\alpha = 0.1$.}
  \label{fig:1d_px_pt}
\end{figure}

Finally, we investigate the influence of the choice of box sizes
$\mu$ and $\eta$ on the error given a fixed value
$\varepsilon= 1/140$. In \cref{fig:1d_box}, it is shown that when
increasing $\mu$ from a small value, there is some initial decrease
in the error due to the reduction in $(\varepsilon/\mu)^{q_x +2}$
before it takes a constant value, due to the fact that the other
terms in \cref{eq:err_est_summary} dominate. At some point,
depending on $p_x$, the error starts increasing again since the term
$\mu^{p_x + 1}$ starts dominating the error. When varying $\eta$, we
have a similar behavior.  However, increasing $\eta$ results in a
much larger initial error reduction, given that $p_t$ is chosen
large enough. The slopes of this decrease depend on $q_t$. Once
$\eta$ is larger than a certain threshold, the error takes a
constant value.  When $p_t = 1$, the error only decreases initially
and then starts increasing again due to the term $\eta^{p_t + 1}$.

\begin{figure}[h!]
  \centering
  \includegraphics[width=.85\textwidth]{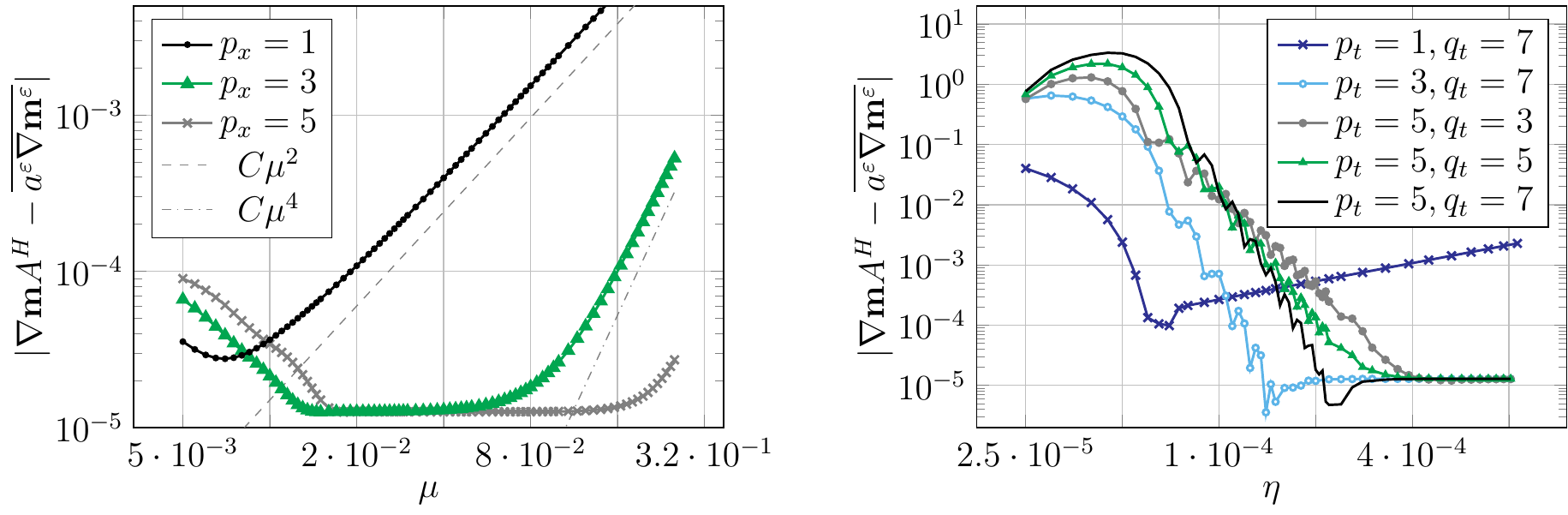}
  \caption{Influence of the box sizes $\eta$ and $\mu$ on the error
    in (M1) for fixed $\varepsilon = 1/140$. Kernel parameters $p_x = 5, q_x = 7, p_t = 5, q_t = 7$
    if not explicitly stated otherwise in the plot. Damping $\alpha = 0.01$.
    When varying $\eta$, $\mu = 0.03$ and when varying $\mu$, $\eta = 1.5\cdot10^{-4}$.
  }
  \label{fig:1d_box}
\end{figure}

Overall, we can conclude that for the 1D example problem, the error
is considerably more affected by temporal than spatial averaging.
The estimate in \Cref{thm:hmm_error} matches conceptually well with
the observed behavior but is slightly too pessimistic.

\subsection{Two-dimensional examples}

We here consider a periodic problem where the material coefficient is chosen to be
\begin{align*}
  a(x_1, x_2) &= \frac{1}{2} +  \left(\frac{1}{2} + \frac{1}{4} \sin(2 \pi x_1)\right)\left(\frac{1}{2} + \frac{1}{4} \sin(2 \pi x_2)\right)
  \\& \hspace{2cm}+ \frac{1}{4} \cos(2 \pi (x_1-x_2)) + \frac{1}{2} \sin(2 \pi x_1)\,.
\end{align*}
Solving the cell problem \cref{eq:cell_problem} numerically, the
corresponding homogenized coefficient is computed to be
\begin{align*}
  \A^H =
  \begin{bmatrix}
    0.61720765 & 0.02618130 \\
    0.02618130 & 0.71523722
  \end{bmatrix}\,,
\end{align*}
a full matrix with two different diagonal elements. The upscaling
errors when varying $\varepsilon$ in (M1), (M2) and (M3) for an
example problem with this material coefficient is shown in
\Cref{fig:2d_comp}.

One can observe a similar behavior as for the 1D problem. However,
note that the error in (M1) is considerably lower than for (M2) and
(M3) in this example. In (M1) and (M2), we again observe convergence
proportional to $\varepsilon^2$ for low values of $\varepsilon$
instead of $\varepsilon$ as suggested by \Cref{thm:hmm_error}.
However, the error in (M3) with low damping, $\alpha = 0.01$, decays
only proportionally to $\varepsilon$ rather than $\varepsilon^2$. We
suspect that this is related to the term $\varepsilon III_4$ in the
analysis of the error in (M3), \cref{eq:nonlinear_est4}, the term
taking the interaction of fast oscillations in time with each other
into account. With higher $\alpha$, the temporal oscillations get
damped away faster and we do not observe that behavior.
\begin{figure}[h]
  \centering
  \includegraphics[width=\textwidth]{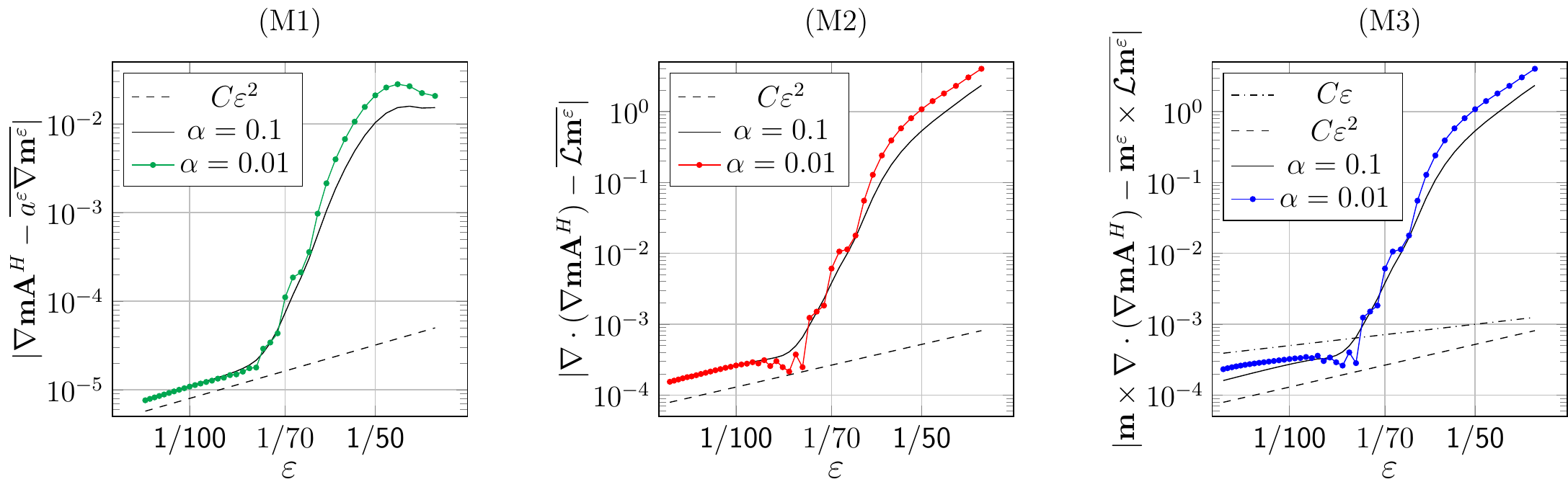}
  \caption{Approximation error in (M1)-(M3) two dimensions for varying $\varepsilon$, where
    $\mu = 0.06, \eta = 0.0003$ and the kernel parameters are
    $ p_x = 5, q_x = 7, p_t = 3$ and $q_t = 7$.}
  \label{fig:2d_comp}
\end{figure}

Apart from this observation for low $\varepsilon$, the errors in
(M2) and (M3) behave very similar when varying the parameters in the
model. We therefore focus on comparing (M1) and (M2) in the
following.
The influence of the kernel parameters is similar to the 1D
problem. As shown in \cref{fig:2d_pt_px_qt}, choosing low $p_x$ or
$p_t$ results in constant error when decreasing $\varepsilon$,
corresponding to $C \mu^{p_x +1}$ or $C\eta^{p_t +1}$, respectively.
The parameter $q_t$ determines the speed of the initial
decay. However, in contrast to the 1D case it is harder to
specifically determine the slopes in this example.
\begin{figure}[h!]
  \centering
  \includegraphics[width=.9\textwidth]{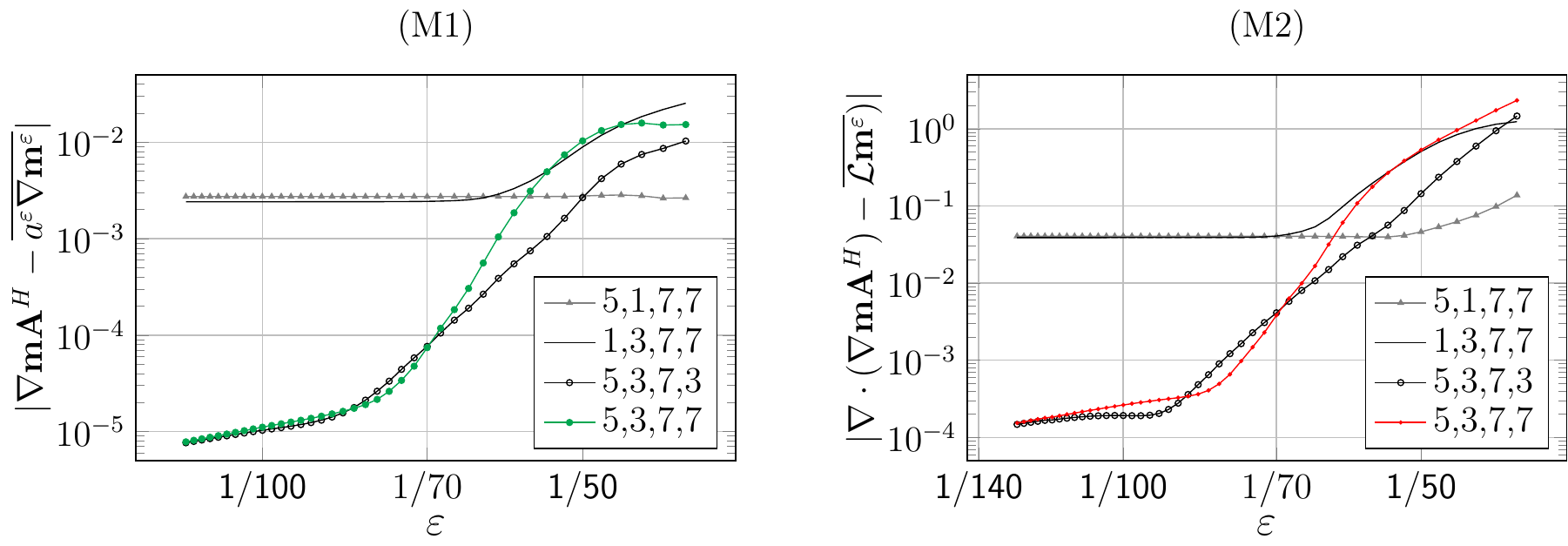}
  \caption{Approximation error in (M1) and (M2) when varying
    $\varepsilon$. Damping $\alpha = 0.1$, spatial averaging
    parameter $\mu = 0.06$ and averaging time $\eta =
    0.0003$. Kernel parameters according to legend, in order
    $p_x, p_t, q_x, q_t$.}
  \label{fig:2d_pt_px_qt}
\end{figure}

In \Cref{fig:eta2d}, the error in (M1) and (M2) when varying $\eta$
is shown for two different values of $\varepsilon$, similar to
\Cref{fig:1d_box}, right, in the 1D case. When choosing low $\eta$,
the errors are high but decrease rapidly as $\eta$ increases. From
$\eta \approx 2 \varepsilon^2$ the error stays at a constant level.

\begin{figure}[h!]
  \centering
  \includegraphics[width=.9\textwidth]{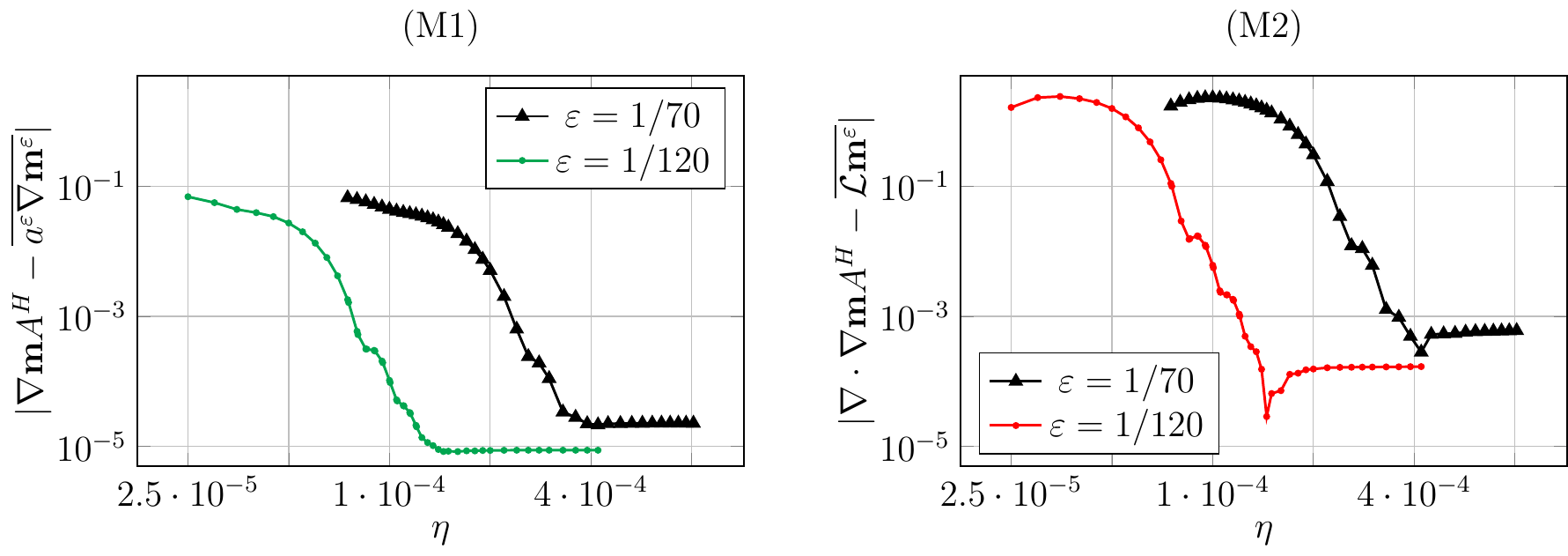}
  \caption{Approximation error in (M1) and (M2) when increasing
    $\eta$. Damping $\alpha = 0.01$, kernel parameters
    $p_x = 5, q_x = 7, p_t = 3, q_t = 7$ and $\mu = 0.06$.}
  \label{fig:eta2d}
\end{figure}

\begin{figure}[h!]
  \centering
  \includegraphics[width=.9\textwidth]{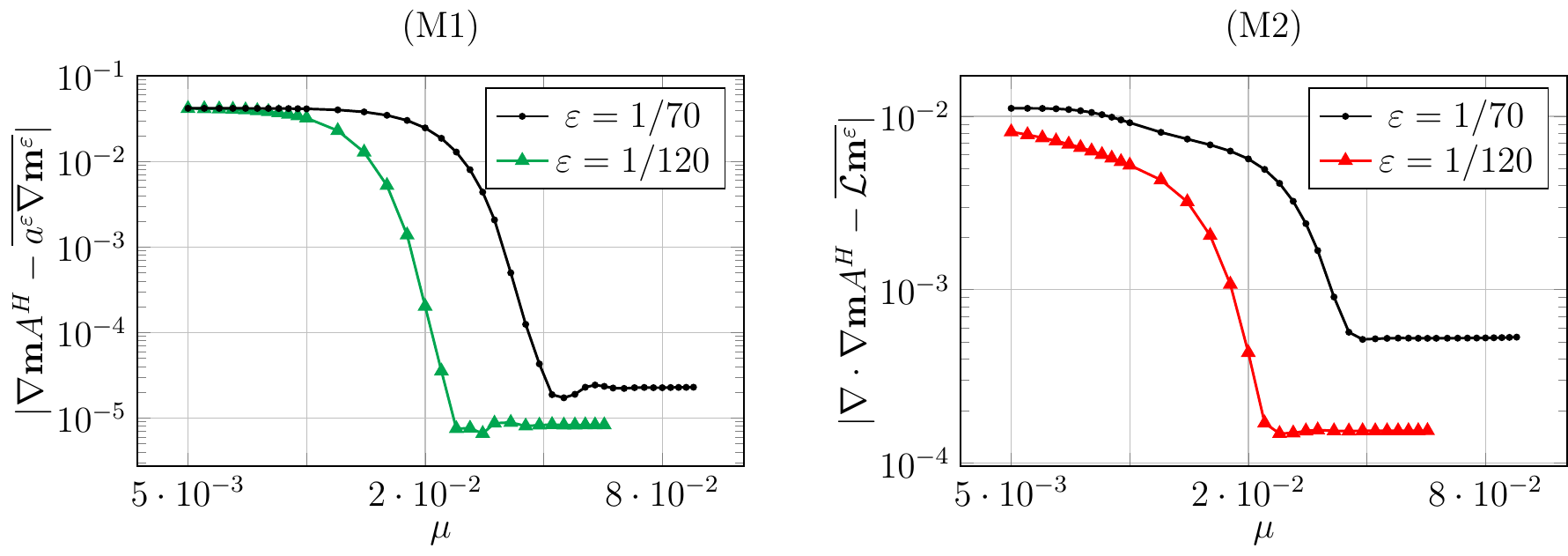}
  \caption{Approximation error in (M1) and (M2) when increasing the
    spatial averaging size $\mu$. Damping $\alpha = 0.01$ and kernel
    parameters $p_x = 5, q_x = 7, p_t = 3, q_t = 7$. For
    $\varepsilon = 1/70$, final time $\eta = 4.5\cdot10^{-4}$ and
    for $\varepsilon = 1/120$, $\eta = 2\cdot10^{-4}$.}
  \label{fig:mu2d}
\end{figure}

Finally, we investigate the influence of $\mu$ as shown in
\cref{fig:mu2d}. We can observe rapidly decreasing errors until
$\mu \approx 3 \varepsilon$, then the errors are almost constant. In
contrast to the 1D case, shown in \Cref{fig:1d_box}, left, the choice of $\mu$ has a significant
impact on the error in this example. In particular, in (M1) the
magnitude of the error is determined by $\eta$ and $\mu$ equally. In
case of (M2), $\eta$ still has a somewhat larger impact than $\mu$.

\section*{Acknowledgments}
Support by the Swedish Research Council under grant no 2017-04579 is
gratefully acknowledged.

\bibliographystyle{siamplain}
\bibliography{hmm}
\end{document}